\numberwithin{equation}{section}
\theoremstyle{plain}
\newtheorem{theorem}{Theorem}[section]
\newtheorem{proposition}[theorem]{Proposition}
\newtheorem{corollary}[theorem]{Corollary}
\newtheorem{lemma}[theorem]{Lemma}
\theoremstyle{definition}
\newtheorem{definition}[theorem]{Definition}
\newtheorem{remark}[theorem]{Remark}
\newcommand\restr[2]{{
  \left.\kern-\nulldelimiterspace #1 \right|_{#2} 
}}
\newcommand{\R}{\mathbb{R}}
\renewcommand{\d}{\mathrm{d}}
\newcommand{\Cinfty}{\mathscr{C}^\infty}
\newcommand{\T}{\mathrm{T}}
\newcommand{\cT}{\mathrm{T}^\ast}
\newcommand*{\inn}[1]{\iota_{#1}}
\newcommand{\Lie}{\mathscr{L}}
\newcommand{\X}{\mathfrak{X}}
\newcommand{\dparder}[2]{\dfrac{\partial #1}{\partial #2}}
\DeclareMathOperator{\Ima}{Im}
\DeclareMathOperator{\supp}{Supp}
\DeclareMathAlphabet{\mathpzc}{OT1}{pzc}{m}{it}
\def\d{\mathrm{d}}
\newcommand{\dfn}[1]{\textbf{{#1}}}
\begin{document}

%%%%%%%%%%%%%%%%%%%%

\vspace{3em}

{\huge\sffamily\raggedright Brackets in multicontact geometry\\[8px]and multisymplectization
}
\vspace{2em}

{\large\raggedright
    \today
}

\vspace{3em}

{\Large\raggedright\sffamily
    Manuel de León
}\vspace{1mm}\newline
{\raggedright
    Instituto de Ciencias Matemáticas (ICMAT--CSIC).\\
    C. Nicolás Cabrera, 13-15, Fuencarral, 28049 Madrid, Spain.
    \medskip
    
    Real Academia de Ciencias Exactas, Físicas y Naturales de España.\\
    C. Valverde, 22, 28004 Madrid, Spain.\\
    e-mail: \href{mailto:mdeleon@icmat.es}{mdeleon@icmat.es} --- orcid: \href{https://orcid.org/0000-0002-8028-2348}{0000-0002-8028-2348}
}

\medskip

{\Large\raggedright\sffamily
    Rubén Izquierdo-López
}\vspace{1mm}\newline
{\raggedright
    Instituto de Ciencias Matemáticas (ICMAT--CSIC).\\
    C. Nicolás Cabrera, 13-15, Fuencarral, 28049 Madrid, Spain.
    
    e-mail: \href{mailto:ruben.izquierdo@icmat.es}{ruben.izquierdo@icmat.es} --- orcid: \href{https://orcid.org/0009-0007-8747-344X}{0009-0007-8747-344X}
}

\medskip

{\Large\raggedright\sffamily
    Xavier Rivas
}\vspace{1mm}\newline
{\raggedright
    Department of Computer Engineering and Mathematics, Universitat Rovira i Virgili\\
    Avinguda Països Catalans 26, 43007 Tarragona, Spain\\
    e-mail: \href{mailto:xavier.rivas@urv.cat}{xavier.rivas@urv.cat} --- orcid: \href{https://orcid.org/0000-0002-4175-5157}{0000-0002-4175-5157}
}

\vspace{3em}

{\large\bf\raggedright
    Abstract
}\vspace{1mm}\newline
{\raggedright
In this paper we introduce a graded bracket of forms on multicontact manifolds. This bracket satisfies a graded Jacobi identity as well as two different versions of the Leibniz rule, one of them being a weak Leibniz rule, extending the well-known notions in contact geometry. In addition, we develop the multisymplectization of multicontact structures to relate these brackets to the ones present in multisymplectic geometry and obtain the field equations in an abstract context. The Jacobi bracket also permits to study the evolution of observables and study the dissipation phenomena, which we also address. Finally, we apply the results to classical dissipative field theories.
}

\vspace{3em}
%\bigskip

{\large\bf\raggedright
    Keywords:} Multicontact geometry, Poisson brackets, Jacobi brackets, graded Lie algebras.

\medskip

{\large\bf\raggedright
MSC2020 codes}:
{\it Primary}: 53D42, 70S20.  {\it Secondary}: 35B06, 53D10, 53Z05, 70S10.

\medskip
% ### Primary:
% - **53C15**: **General Geometric Structures on Manifolds**  
%   Estudio de estructuras geométricas en variedades, como G-estructuras y estructuras integrables.

% - **53C12**: **Foliations (Differential Geometry)**  
%   Análisis de foliaciones, que son descomposiciones en subvariedades dentro de la geometría diferencial.

% - **58A30**: **Vector Distributions (Subbundles of the Tangent Bundle)**  
%   Estudio de distribuciones vectoriales, subfibrados del fibrado tangente en el contexto de análisis global.

% ### Secondary:
% - **53C10**: **G-Structures**  
%   Análisis de G-estructuras relacionadas con simetrías geométricas y grupos de transformaciones.

% - **53D10**: **Contact Manifolds, General**  
%   Estudio de variedades de contacto, que son análogas a las variedades simplécticas, pero en dimensión impar.

% - **58J60$$: **Relations of PDEs with special manifold structures**

%%%%%%%%%%%%%%%%%%%%
\noindent {\bf Authors' contributions:} All authors contributed to the study conception and design. The manuscript was written and revised by all authors. All authors read and approved the final version.
\medskip

\noindent {\bf Competing Interests:} The authors have no competing interests to declare. 

\newpage

{\setcounter{tocdepth}{2}
\def\baselinestretch{1}
\small
% hack per a eliminar l'espai vertical 1em
\def\addvspace#1{\vskip 1pt}
\parskip 0pt plus 0.1mm
\tableofcontents
}

\pagestyle{fancy}

% Custom header and footer content
\fancyhead[L]{Brackets and multisymplectization}    % Left-aligned header
\fancyhead[C]{}                  % Center header (empty)
\fancyhead[R]{M. de León, R. Izquierdo-López and X. Rivas}       % Right-aligned header

\fancyfoot[L]{}     % Left-aligned footer
\fancyfoot[C]{\thepage}                  % Center footer (empty)
\fancyfoot[R]{}            % Right-aligned footer with date

\setlength{\headheight}{17pt}

% Add horizontal line under header
\renewcommand{\headrulewidth}{0.1pt}  % Thickness of the header line (0.4pt)
\renewcommand{\footrulewidth}{0pt}    % No footer line

% Redefine the horizontal line with space above it
\renewcommand{\headrule}{%
    \vspace{3pt}                % Space between header text and the line
    \hrule width\headwidth height 0.4pt % The horizontal line (0.4pt thick)
    \vspace{0pt}                % Additional space below the line (if needed)
}

\setlength{\headsep}{30pt}  % Space between the header and main content

%%%%%%%%%%%%%%%%%%%%%%%%%%%%%%%%%%%%%%%%%%%%%%%%%%%%%%%%
\section{Introduction}
%%%%%%%%%%%%%%%%%%%%%%%%%%%%%%%%%%%%%%%%%%%%%%%%%%%%%%%%
The existence of brackets in classical field theory analogous to Poisson or Jacobi brackets in mechanics \cite{AMR_88,LR_85,LM_87,Vai_94} is of extreme importance. On the one hand, they are useful to show the evolution of any observable quantity and, on the other hand, they are the key to obtain the quantization of the given system, following Dirac's ideas. Indeed, the brackets bring the bridge between the geometry of phase space and the algebra of observables.

To our knowledge, the geometric version of Poisson brackets were first defined in \cite{CIL_96} for the multisymplectic formalism of classical field theories, just as a continuation of the study of abstract multisymplectic manifolds carried in \cite{CIL_99}. An important property of these brackets, already discussed in the work by Cantrijn, Ibort and de León, is that it induces a Lie algebra structure on $\Omega_H(M)/ \dd \Omega(M)$, the space of Hamiltonian forms modulo exact forms. These constructions have subsequently been used by other authors \cite{Bla_19,Bla_21,FG_13,gay-balmaz_new_2024}, and more recently developed within a more general and geometrical framework \cite{BMR_17,LI_25,Z_12} (see also a recent approach to classical field theories based on smooth sets of fields \cite{newone}).

On the other hand, dissipative field theories have recently been described by introducing the concept of multicontact structures \cite{LGMRR_23,LGMRR_25}, which generalize the usual contact ones. However, a bracket in multicontact field theories analogue to the Jacobi bracket in contact geometry is yet to be defined and studied. This is the first main goal of the present paper.
%It is natural that, following the introduction of multi-contact structures as a counterpart to multisymplectic theories, one of our goals was to define similar brackets in this new context. And this has been the first main goal of the present paper.

As we approached this problem, we discovered that a graded bracket can be associated to an arbitrary differential form on any smooth manifold, say $\Theta \in \Omega^n(M)$. The main ingredient is a special kind of multivector fields on the given manifold, which we call infinitesimal
conformal transformations of the given form $\Theta$. This definition mimics the situation on contact manifolds and their infinitesimal conformal transformations. {These multivector fields are then used to define the space of conformal Hamiltonian forms and the corresponding graded Jacobi bracket. This bracket is graded-skew-symmetric and satisfies the usual graded Jacobi identity (see Corollary \ref{cor:Jacobi_identity}). Furthermore, two different versions of the Leibniz rule appear: one corresponding to the Gerstenhaber algebra structure \cite{Ger_63} of infinitesimal conformal transformations (see Corollary \ref{corollary:Leibniz_rule}) and a second one generalizing the weak Leibniz rule present in contact geometry (see Proposition \ref{prop:Weak_Laibniz_Rule}).}

To study the relationship between these brackets and the graded Poisson brackets of multisymplectic geometry,
%multisymplectic and multicontact structures,
we extensively develop a pre-multisymplectization procedure extending the one already known for contact structures which works on any manifold equipped with an $n$-form $\Theta$. An interesting fact is that, as a consequence of our study, we can introduce a more general definition of multicontact form that the previously one defined in \cite{LGGMR_23,LGMRR_25}). Indeed, an $n$-form $\Theta$ is called multicontact if the following conditions hold:
$$
\ker_1 \Theta \cap \ker_1 \d \Theta = \{0\}\qquad\text{and}\qquad \ker_1\d \Theta \neq \{0\},
$$
where 
$$
\ker_1 \Theta = \{v \in \T M \colon \iota_v \Theta = 0\} \qquad \text{and} \qquad\ker_1 \d \Theta = \{v \in \T M \colon \iota_v \d \Theta = 0\}
\,,
$$
{being the conditions that guarantee that its canonical pre-multisymplectization (see Definition \ref{def:canonical_premultisymplectization}) is, in fact, a multisymplectization. It is worth mentioning that}
the second property $\ker_1\d \Theta \neq \{0\}$, besides avoiding the possibility that $\d\Theta =0$ (which would recover the case of a (pre)-multisymplectic form) is a technical condition that is used on several steps of our study.

{The second main goal of the present paper is to define dynamics, namely the action dependent Hamilton--De Donder--Weyl equations on an arbitrary multicontact manifold. Our approach is to identify the geometric objects that generalize the Jacobi bivector field and the Reeb vector field of contact manifolds.}  Indeed, we introduce the corresponding $\sharp$ mapping, which generalizes both the bivector field and the Reeb vector field.
%and a class of multivector fields that we define (making an abuse of notation) as the Reeb multivector field. 
This allows us to obtain a new expression for the bracket of forms on a multicontact manifold similar to the case of the Jacobi bracket in contact geometry. 

In order to introduce dynamics, { several steps need to be made. Firstly, we make a finer definition of the Reeb multivector field so that contracting with it is well defined for a wider family of forms. Secondly,} we define the so-called Hamiltonian subbundle of forms and identify then the good Hamiltonians, {which are the ones that allow for a definition of the dissipation form (see \cite{LGMRR_23}), which } 
%The dissipation form introduced in  
has now a more clear and simple definition. Now, given such a good Hamiltonian, one can obtain the Hamilton--de Donder--Weyl equations; more insights are obtained when the multicontact form is of variational type, {where a tensor-like obstruction for every Hamiltonian to be a good Hamiltonian is identified (see Theorem \ref{thm:distortion})}. We also consider the evolution of forms and the notion of dissipated forms. The study ends with an application of the above theory to dissipative field theories.

In order to give a more complete view, we want to comment that the dissipative field equations can be described by other geometric structures, called $k$-contact \cite{LRS_24,GGMRR_20,GGMRR_21} and $k$-cocontact \cite{Riv_23} in the same way that the $k$-symplectic and $k$-cosymplectic geometries \cite{LSV_15} describe the classical field equations. Although the field equations are obtained with both formalisms, there is no known definition of brackets in these cases, which leaves multicontact structures to be more suitable for the study of action dependent field theories. Up to our knowledge, the only brackets defined in the $k$-contact setting was introduced in \cite{LRS_24} for vector-valued functions in order to study systems of ordinary differential equations.
%which leaves the theory incomplete. 

The paper is structured as follows. Section \ref{section:Brackets_induced_by_form} is devoted to define brackets for arbitrary forms on a manifold. The multisymplectization method is developed in Section \ref{section:multisymplectization} to find the relations between these graded Jacobi brackets and graded Poisson brackets. In this section we also use the theory developed to motivate the definition of multicontact manifolds. We investigate the definition of the $\sharp$ mapping on arbitrary multicontact manifolds in Section \ref{section:sharp_mapping}, where we use it to give a different description of the brackets. These structures are then used in Section \ref{section:Field_equations} to define dynamics on general multicontact manifolds. Section \ref{section:Dissipative_field_theories} is devoted to apply the obtained results to the case of classical dissipative field theories. The paper finishes with some conclusions and further work.

\begin{center}{\bf Notation and conventions}
\end{center}
We will use the following notations and conventions throughout the paper; they are collected here to facilitate the reading.
\begin{itemize}\renewcommand\labelitemi{--}
    \item All manifolds are assumed $\Cinfty$-smooth, finite dimensional, Hausdorff and second countable.
    \item Einstein's summation convention is assumed throughout the text, unless stated otherwise.
    \item $\bigvee_p M$ denotes the vector bundle of $p$-vectors on $M$, $\bigwedge^p \T M$.
    \item $\mathfrak{X}^p(M) = \Gamma\left(\bigvee_p M\right)$ denotes the space of all multivector fields of order $p$.
    \item $\bigwedge^aM$ denotes the vector bundle of $a$-forms on $M$, $\bigwedge^a \T^\ast M$.
    \item $\Omega^a(M) = \Gamma\left (\bigwedge^a M\right )$ denotes the space of all $a$-forms on $M$.
    \item $[\cdot, \cdot]$ denotes the Schouten--Nijenhuis bracket on multivector fields. We use the sign conventions of \cite{Mar_97}.
    \item $\Lie_U \alpha = \d{\iota_U\alpha} - (-1)^p \iota \d{\alpha}$, for $U \in \mathfrak{X}^p(M)$,
    $\alpha \in \Omega^a(M)$ denotes the Lie derivative along multivector fields. 
    For a proof of its main properties, we refer to \cite{For_03}.
    \item For a subbundle $K \subseteq \bigvee_p M$, and for $a \geq p$, we denote by $$K^{\circ, a} = \{\alpha \in \bigwedge^a M\mid \iota_K \alpha = 0\}$$ the annihilator of order $a$ of $K$.
    \item Similarly, for a subbundle $S \subseteq \bigwedge^a M$, and $p \leq a$ we denote by 
    $$S^{\circ, p} = \{U \in \bigvee_p M\mid \iota_U S = 0\}$$ the annihilator of order $p$ of $S$.
    \item Given a form $\Theta \in \Omega^{n}(M)$, we denote $\ker_p \Theta = \{u \in \bigvee_p M \mid \iota_u \Theta = 0\}$.
    \item $\mathbb{R}_\times$ denotes $\mathbb{R}\setminus \{0\}$.
\end{itemize}

%%%%%%%%%%%%%%%%%%%%%%%%%%%%%%%%%%%%%%%%%%%%%%%%%%%%%%%%
\section{Brackets induced by a differential \texorpdfstring{$n$}{}-form}
\label{section:Brackets_induced_by_form}
%%%%%%%%%%%%%%%%%%%%%%%%%%%%%%%%%%%%%%%%%%%%%%%%%%%%%%%%

Let $M$ be a finite dimensional smooth manifold and $\Theta \in \Omega^n(M)$ be a smooth $n$-form on $M$. The following definition is an extension of the so-called conformal infinitesimal symmetries in contact geometry.

\begin{definition}
\label{def:conformal_symmetry}
For $p \leq n$, a multivector field $X \in \mathfrak{X}^p(M)$ is said to be an \dfn{infinitesimal conformal transformation} of $\Theta$ if there exists a multivector field $V \in \mathfrak{X}^{p-1}(M)$ such that
\[
\Lie_X \Theta = \iota_V \Theta\,,
\]
where $\Lie_X \Theta = \d \iota_X \Theta - (-1)^p \iota_X \d \Theta$ denotes the Lie derivative of $\Theta$ with respect to the multivector field $X$ (see \cite{For_03, Mar_97, Tul_74} for details).
\end{definition} 
\begin{remark}
  If $p = 1$ and $X \in \mathfrak{X}(M)$ is an infinitesimal conformal transformation, we have that there exists a certain $0$-multivector field, namely a function $g \in \Cinfty(M)$, such that
\[\Lie_X \Theta = g \cdot \Theta\,,\]
so that we recover the usual notion of an infinitesimal conformal symmetry.  
\end{remark}

\begin{definition}
\label{def:conformal_Hamiltonian_form}
A \dfn{conformal Hamiltonian form} is an $(n-p)$-form $\alpha \in \Omega^{n-p}(M)$ such that 
\[\iota_{X_\alpha} \Theta = -\alpha\,,\]
for some infinitesimal conformal transformation $X_\alpha \in \mathfrak{X}^p(M)$. The space of conformal Hamiltonian $a$-forms is denoted by $\Omega^a_H(M, \Theta)$. If the form to be used is clear form the context we shall simply use $\Omega^a_H(M)$. 
\end{definition}

Note that an $(n-p)$-form $\alpha$ is conformal Hamiltonian if and only if there exist multivector fields $X_\alpha \in \mathfrak{X}^p(M)$ and $V_\alpha \in \mathfrak{X} ^{p-1}(M)$ such that 
\begin{equation}
\label{eq:Hamiltonian_form}
    \inn{X_\alpha} \Theta = - \alpha\qquad\text{and}\qquad
    \inn{X_\alpha} \d \Theta = (-1)^{p+1}(\d \alpha + \iota_{V_\alpha}\Theta)\,.
\end{equation}
The following result will allow us to define a bracket on the space of conformal Hamiltonian forms.
\begin{theorem} 
\label{thm:Well_definedness}
Let $\alpha \in \Omega^{n-p}(M)$ and $\beta \in \Omega^{n-q}(M)$ be conformal Hamiltonian forms and  let $X_\alpha, V_\alpha, X_\beta$ and  $V_\beta$ be multivector fields satisfying
\begin{equation*}
    \iota_{X_\alpha} \Theta = - \alpha\,,\qquad
    \iota_{X_\alpha} \d \Theta = (-1)^{p+1}(\d \alpha + \iota_{V_\alpha}\Theta)\,,
\end{equation*}
and
\begin{equation*}
    \inn{X_\beta} \Theta = - \beta\,,\qquad
    \inn{X_\beta} \d \Theta = (-1)^{q+1}(\d \beta + \inn{V_\beta}\Theta)\,.
\end{equation*}
Then, the expression \[-\iota_{[X_\alpha, X_\beta]} \Theta\] only depends on $\alpha$ and $\beta$ and is also a conformal Hamiltonian form.
\end{theorem}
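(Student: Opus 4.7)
The plan is to split the claim into two parts: (i) that $[X_\alpha, X_\beta]$ is itself an infinitesimal conformal transformation of $\Theta$, which forces $-\iota_{[X_\alpha, X_\beta]}\Theta$ to be conformal Hamiltonian; and (ii) that the form $-\iota_{[X_\alpha, X_\beta]}\Theta$ does not depend on the particular choices of $X_\alpha, V_\alpha, X_\beta, V_\beta$. Both parts rest on two standard identities of the Cartan calculus for multivector fields, which I quote from \cite{Mar_97, For_03}: the Lie derivative compatibility $\Lie_{[U, W]} = [\Lie_U, \Lie_W]_{\mathrm{gr}}$, the contraction compatibility $\iota_{[U, W]} = [\Lie_U, \iota_W]_{\mathrm{gr}}$, and the graded-commutativity $\iota_A \iota_B = (-1)^{|A||B|} \iota_B \iota_A$ of two interior contractions.

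For (i), I apply the first identity to $\Theta$, substitute the hypothesis $\Lie_{X_\alpha}\Theta = \iota_{V_\alpha}\Theta$ (and symmetrically for $\beta$), and then expand each term $\Lie_{X_\alpha}\iota_{V_\beta}\Theta$ via the second identity. This produces ``derivative'' contributions $\iota_{[X_\alpha, V_\beta]}\Theta$ plus residual terms of the form $\iota_{V_\beta}\iota_{V_\alpha}\Theta$ and $\iota_{V_\alpha}\iota_{V_\beta}\Theta$. The residuals cancel pairwise thanks to the graded-commutativity of contractions, leaving
\[
\Lie_{[X_\alpha, X_\beta]}\Theta = \iota_W \Theta, \qquad W := [X_\alpha, V_\beta] - (-1)^{(p-1)(q-1)}[X_\beta, V_\alpha] \in \mathfrak{X}^{p+q-2}(M).
\]
By Definitions \ref{def:conformal_symmetry} and \ref{def:conformal_Hamiltonian_form}, this shows at once that $[X_\alpha, X_\beta]$ is an infinitesimal conformal transformation of $\Theta$ and that $-\iota_{[X_\alpha, X_\beta]}\Theta$ is conformal Hamiltonian.

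For (ii), observe first that $V_\alpha, V_\beta$ never appear in $-\iota_{[X_\alpha, X_\beta]}\Theta$, so only the ambiguity in $X_\alpha, X_\beta$ must be controlled. By bilinearity of the Schouten--Nijenhuis bracket it suffices to prove the key lemma: if $Y \in \mathfrak{X}^p(M)$ satisfies $\iota_Y \Theta = 0$, then $\iota_{[Y, X_\beta]}\Theta = 0$. Indeed, any two valid choices $X_\alpha, X'_\alpha$ for $\alpha$ satisfy $\iota_{X_\alpha - X'_\alpha}\Theta = 0$, and the case of $X_\beta$ follows by graded skew-symmetry of the Schouten bracket. The lemma itself follows from applying the second identity to $\iota_{[X_\beta, Y]}\Theta$: the term $\Lie_{X_\beta}(\iota_Y \Theta)$ vanishes since $\iota_Y \Theta = 0$, and the remaining contribution is a scalar multiple of $\iota_Y \iota_{V_\beta}\Theta$, which reduces to $\pm \iota_{V_\beta}\iota_Y \Theta = 0$ by graded-commutativity of contractions.

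The only genuine obstacle in executing this plan is the sign bookkeeping with the Koszul conventions of \cite{Mar_97}; structurally the argument is a direct exercise in the Cartan calculus for multivector fields, and no auxiliary geometric structure on $M$ is required.
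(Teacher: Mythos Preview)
Your proof is correct. Part (i) is identical to the paper's computation, producing the same conformal factor $W = [X_\alpha, V_\beta] - (-1)^{(p-1)(q-1)}[X_\beta, V_\alpha]$.

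For part (ii) you take a slightly different route. The paper proves independence by deriving the closed formula
\[
-\iota_{[X_\alpha, X_\beta]}\Theta = (-1)^{(p-1)q}\bigl(\Lie_{X_\alpha} - \iota_{V_\alpha}\bigr)\beta,
\]
which visibly depends only on $\beta$ (not on $X_\beta, V_\beta$), and then invokes symmetry. You instead argue by linearity via the lemma ``$\iota_Y\Theta = 0 \Rightarrow \iota_{[X_\beta, Y]}\Theta = 0$ when $X_\beta$ is conformal'', whose proof is exactly the same Cartan identity applied once. Both arguments are equally short; the paper's version has the minor advantage that the displayed formula is reused later (Remark \ref{remark:Expression_for_Jacobi_bracket}) to give an explicit expression for the bracket, whereas your lemma is perhaps cleaner as a standalone statement. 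One small presentational point: your lemma relies on $\Lie_{X_\beta}\Theta = \iota_{V_\beta}\Theta$, so it would be clearer to include ``$X_\beta$ an infinitesimal conformal transformation'' in its hypothesis rather than leaving it implicit from context.
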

\begin{proof} Let us start by checking independence on the choice of multivector fields. We have (see \cite{For_03})
\begin{align*}
    -\iota_{[X_\alpha, X_\beta]} \Theta &= - (-1)^{(p-1)q} \Lie_{X_\alpha} \iota_{X_\beta} \Theta + \iota_{X_\beta}\Lie_{X_\alpha} \Theta \\
    &= - (-1)^{(p-1)q} \Lie_{X_\alpha} \iota_{X_\beta} \Theta+  \iota_{X_\beta} \iota_{V_\alpha} \Theta\\
    &= (-1)^{(p-1)q}(\iota_{V_\alpha} - \Lie_{X_\alpha}) \iota_{X_\beta} \Theta \\
    &=  (-1)^{(p-1)q}(\Lie_{X_\alpha}- \iota_{V_\alpha}) \beta\,,
\end{align*}
which implies that is independent on the choice of $X_\beta, V_\beta$. A symmetric argument proves the same result for $X_\alpha, V_\alpha$.

To see that it is again a conformal Hamiltonian form it is enough to show that $[X_\alpha, X_\beta]$ is again an infinitesimal conformal transformation for $\Theta$. Indeed,
\begin{align*}
    \Lie_{[X_\alpha, X_\beta]}\Theta &= (-1)^{(p-1)(q-1)} \Lie_{X_\alpha} \Lie_{X_\beta} \Theta - \Lie_{X_\beta} \Lie_{X_\alpha} \Theta\\
    &=  (-1)^{(p-1)(q-1)} \Lie_{X_\alpha} \iota_{V_\beta} \Theta - \Lie_{X_\beta} \iota_{V_\alpha} \Theta\\
    &= \iota_{[X_\alpha, V_\beta] } \Theta + \iota_{V_\beta} \Lie_{X_\alpha} \Theta - (-1)^{(p-1)(q-1)}(\iota_{[X_\beta, V_\alpha]} \Theta + \iota_{V_\alpha} \Lie_{X_\beta} \Theta)\\
    &= \iota_{[X_\alpha, V_\beta] } \Theta + \iota_{V_\beta} \iota_{V_\alpha} \Theta - (-1)^{(p-1)(q-1)}(\iota_{[X_\beta, V_\alpha]} \Theta + \iota_{V_\alpha} \iota_{V_\beta} \Theta)\\
    &= \iota_{[X_\alpha, V_\beta] } \Theta- (-1)^{(p-1)(q-1)}\iota_{[X_\beta, V_\alpha]} \Theta\\
    &= \iota_V \Theta\,,
\end{align*}
where 
\[V =[X_\alpha, V_\beta] - (-1)^{(p-1)(q-1)} [X_\beta, V_\alpha]\,,\]
concluding that it is a conformal symmetry.
\end{proof}

\begin{definition}
\label{def:Jacobi_bracket}
The \dfn{graded Jacobi bracket} of two conformal Hamiltonian forms $\alpha, \beta$ is
\[\{\alpha, \beta\} := - \iota_{[X_\alpha, X_\beta]} \Theta\,,\] where $X_\alpha, X_\beta$ are conformal symmetries for each of these forms. Well-definedness of this bracket follows from Theorem \ref{thm:Well_definedness}. For simplicity, we will refer to this bracket as the Jacobi bracket, provided there is no risk of confusion.
\end{definition}

The Jacobi bracket of conformal Hamiltonian forms defined by a differential $n$-form $\Theta \in \Omega^{n}(M)$ is a bilinear operation
\[\Omega^{a}_H(M) \otimes \Omega^b_H(M) \longrightarrow \Omega^{a + b - (n - 1)}_H(M)\,.\]

\begin{remark} \label{remark:Expression_for_Jacobi_bracket}
Notice that we may write 
$\{\alpha, \beta\} = (-1)^{(p-1)q} \left( \Lie_{X_\alpha}  - \iota_{V_\alpha}\right) \beta$. This formula resembles the Courant bracket used in the higher (or graded) generalization of Dirac geometry (see \cite{Z_12}). In fact, in Section \ref{section:sharp_mapping}, we will give a formula for the bracket in terms of the $\sharp$-morphism (see Theorem \ref{thm:bracket_formula_sharp}), which is a generalization of the formula found in the generalization of Dirac geometry (see \cite{LI_25} for the definition of the $\sharp$-morphisms and the expression of the Poisson brackets using them).
\end{remark}

\begin{remark} Suppose $\Theta = \eta$ a contact form on $M$. Then, the previous bracket generalizes the usual Jacobi bracket of contact geometry. Indeed, Eq. \eqref{eq:Hamiltonian_form} translates into
\[\iota_{X_f} \eta = -f\qquad\text{and}\qquad \iota_{X_f} \d \eta = \d f - R(f) \eta\,,\]
where $R$ denotes the \textit{Reeb vector field}, the unique vector field satisfying $\iota_{\mathcal{R}} \eta = 1$, and $\iota_{\mathcal{R}} \d \eta = 0$. Definition \ref{def:Jacobi_bracket} gives 
\[\{f, g\} = - \iota_{[X_f, X_g]} \eta\,,\]
which recovers the usual Jacobi bracket \cite{deLeon2019,LL_19}.
\end{remark}

The following result follows from the properties of the Schouten--Nijenhuis bracket.

\begin{corollary}
\label{cor:Jacobi_identity}
Let $\alpha\in \Omega^{n - p} (M)$, $\beta \in \Omega^{n - q}(M)$ and $\gamma \in \Omega^{n - r}(M)$ be conformal Hamiltonian forms. Then, the bracket of conformal Hamiltonian forms is graded skew-symmetric and satisfies the graded Jacobi identity, namely
\begin{enumerate}[{\rm(i)}]
    \item $\{\alpha, \beta\} = -(-1)^{(p-1)(q-1)}\{\beta, \alpha\}$,
    \item $(-1)^{(p-1)(r-1)} \{\alpha, \{\beta, \gamma\}\} + (-1)^{(r-1)(q-1)} \{\gamma, \{\alpha, \beta\}\} + (-1)^{(q-1)(p-1)} \{\beta, \{\gamma, \alpha\}\} = 0$.
\end{enumerate}
\end{corollary}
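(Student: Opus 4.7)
The plan is to show that both statements are direct consequences of the corresponding identities satisfied by the Schouten--Nijenhuis bracket, transported through the defining formula $\{\alpha,\beta\} = -\iota_{[X_\alpha,X_\beta]}\Theta$. The well-definedness in Theorem~\ref{thm:Well_definedness} makes both reductions clean, since the value of the bracket does not depend on the choice of infinitesimal conformal transformations $X_\alpha, X_\beta$.

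For the graded skew-symmetry, I would simply use that the Schouten--Nijenhuis bracket satisfies $[X_\alpha,X_\beta] = -(-1)^{(p-1)(q-1)}[X_\beta,X_\alpha]$ for $X_\alpha \in \mathfrak{X}^p(M)$ and $X_\beta \in \mathfrak{X}^q(M)$ (Marle's conventions, \cite{Mar_97}), and contract with $\Theta$:
\[
\{\alpha,\beta\} = -\iota_{[X_\alpha,X_\beta]}\Theta = (-1)^{(p-1)(q-1)}\iota_{[X_\beta,X_\alpha]}\Theta = -(-1)^{(p-1)(q-1)}\{\beta,\alpha\}.
\]

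For the graded Jacobi identity, the key preliminary observation is that the computation in the proof of Theorem~\ref{thm:Well_definedness} actually shows that $[X_\alpha,X_\beta]$ is an infinitesimal conformal transformation of $\Theta$; combined with $\iota_{[X_\alpha,X_\beta]}\Theta = -\{\alpha,\beta\}$, this means that $[X_\alpha,X_\beta]$ is a legitimate choice of conformal Hamiltonian multivector field for $\{\alpha,\beta\}$. Therefore I can iterate the bracket at the level of multivector fields:
\[
\{\alpha,\{\beta,\gamma\}\} = -\iota_{[X_\alpha,[X_\beta,X_\gamma]]}\Theta,
\]
and similarly for the two cyclic permutations. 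Then I invoke the graded Jacobi identity for the Schouten--Nijenhuis bracket of multivector fields of degrees $p,q,r$,
\[
(-1)^{(p-1)(r-1)}[X_\alpha,[X_\beta,X_\gamma]] + (-1)^{(r-1)(q-1)}[X_\gamma,[X_\alpha,X_\beta]] + (-1)^{(q-1)(p-1)}[X_\beta,[X_\gamma,X_\alpha]] = 0,
\]
and contract the resulting identity with $\Theta$, multiplied by $-1$, to obtain (ii).

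The argument is essentially bookkeeping; the only thing to be careful about is that the shift in degree is consistent, namely that $\{\alpha,\beta\}$ has multivector-degree $p+q-1$ so that the sign conventions in the Schouten--Nijenhuis Jacobi identity match exactly the ones stated in (ii). Because I re-use the same $X_\alpha,X_\beta,X_\gamma$ at every step and Theorem~\ref{thm:Well_definedness} already ensures the answer does not depend on this choice, no extra compatibility needs to be checked.
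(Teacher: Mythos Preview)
Your proposal is correct and follows exactly the approach the paper indicates: the paper simply states that the result ``follows from the properties of the Schouten--Nijenhuis bracket,'' and your argument spells out precisely this, using Theorem~\ref{thm:Well_definedness} to justify taking $[X_\alpha,X_\beta]$ as a Hamiltonian multivector field for $\{\alpha,\beta\}$ when iterating.
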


Note that infinitesimal conformal transformations are closed under exterior products, since
\begin{align}
    \Lie_{X\wedge Y}\Theta &= \Lie_Y\inn{X}\Theta + (-1)^q\inn{Y}\Lie_X\Theta \\
    &= (-1)^{p(q-1)}\left( \inn{[Y,X]}\Theta + \inn{X}\Lie_Y\Theta\right) + (-1)^q\inn{Y}\Lie_X\Theta\,, \label{eq:conformal_factor_wedge}
\end{align}
where $X\in\X^p(M)$ and $Y\in\X^q(M)$. This motivates the following definition.

\begin{definition}
    The \dfn{cup product} of conformal Hamiltonian forms is defined as \[\alpha  \vee \beta := - \iota_{X_\alpha \wedge X_\beta} \Theta\,.\]
\end{definition}

Note that the cup product of conformal Hamiltonian forms is well-defined since
$$ \alpha\vee\beta = \inn{X_\beta}\alpha = (-1)^{pq}\inn{X_\alpha}\beta\,. $$

\begin{remark} The cup product of two conformal Hamiltonian forms is again conformal Hamiltonian, since $X_\alpha \wedge X_\beta$ is again an infinitesimal conformal transformation.
\end{remark}

The cup product defines a bilinear operation
\[
\Omega_H^a(M) \otimes \Omega^b_H(M) \xlongrightarrow{\vee} \Omega^{a + b - n}_H(M)\,,
\]
which, using the properties of the Schouten--Nijenhuis bracket, is easily seen to satisfy the following property.

\begin{corollary}
\label{corollary:Leibniz_rule}
    Given $\alpha \in \Omega^{n - p}_H(M)$, $\beta \in \Omega^{n - q}_H(M)$ and $\gamma \in \Omega^{n - r}_H(M)$, the cup product satisfies the graded Leibniz rule
    \[\{\alpha, \beta \vee \gamma\} = \{\alpha, \beta\} \vee \gamma + (-1)^{(r-1)q} \beta \vee \{\alpha, \gamma\}\,.\]
\end{corollary}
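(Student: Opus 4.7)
The plan is to reduce the identity to the graded Leibniz rule of the Schouten--Nijenhuis bracket on multivector fields. Fix infinitesimal conformal transformations $X_\alpha \in \X^p(M)$, $X_\beta \in \X^q(M)$ and $X_\gamma \in \X^r(M)$ realising $\alpha$, $\beta$, $\gamma$ as conformal Hamiltonian forms. Two facts already established above are the crucial inputs. First, the short computation preceding the definition of the cup product shows that $X_\beta \wedge X_\gamma$ is itself an infinitesimal conformal transformation of $\Theta$ and $-\iota_{X_\beta \wedge X_\gamma}\Theta = \beta \vee \gamma$, so it may be chosen as a Hamiltonian multivector field for $\beta \vee \gamma$. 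Second, Theorem \ref{thm:Well_definedness} (and its proof) guarantees that $[X_\alpha, X_\beta]$ and $[X_\alpha, X_\gamma]$ are valid Hamiltonian multivector fields for $\{\alpha, \beta\}$ and $\{\alpha, \gamma\}$ respectively.

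With these choices, I would expand
\[
\{\alpha, \beta \vee \gamma\} = -\iota_{[X_\alpha,\, X_\beta \wedge X_\gamma]}\Theta,
\]
and apply the graded Leibniz rule of the Schouten--Nijenhuis bracket to decompose $[X_\alpha, X_\beta \wedge X_\gamma]$ as the sum of $[X_\alpha, X_\beta] \wedge X_\gamma$ and a Koszul sign times $X_\beta \wedge [X_\alpha, X_\gamma]$. Contracting against $-\Theta$ and invoking the two observations above, the first summand yields $\{\alpha, \beta\} \vee \gamma$ directly, while the second yields $\beta \vee \{\alpha, \gamma\}$ multiplied by the same Koszul sign; collecting these two pieces gives the asserted identity.

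The only delicate point is the careful bookkeeping of Koszul signs, which relies on the convention $\iota_{U \wedge V} = \iota_V \circ \iota_U$ implicit in the definition of $\vee$, together with the degree-shift conventions for the Schouten--Nijenhuis Leibniz rule used in \cite{Mar_97}. As a sanity check, the case $p = q = r = 1$ collapses the identity to the classical derivation property $\{\alpha, \beta \vee \gamma\} = \{\alpha, \beta\} \vee \gamma + \beta \vee \{\alpha, \gamma\}$, and the graded skew-symmetry of Corollary \ref{cor:Jacobi_identity} together with the graded commutativity $\alpha \vee \beta = (-1)^{pq}\beta \vee \alpha$ inherited from $X_\alpha \wedge X_\beta = (-1)^{pq}\, X_\beta \wedge X_\alpha$ provides further consistency checks on the exponent.
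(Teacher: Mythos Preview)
Your proposal is correct and follows exactly the route the paper indicates: the text gives no detailed argument and merely says the identity ``is easily seen to satisfy'' the Leibniz rule ``using the properties of the Schouten--Nijenhuis bracket,'' which is precisely your reduction via $X_{\beta\vee\gamma}=X_\beta\wedge X_\gamma$ and the Gerstenhaber identity $[X_\alpha,X_\beta\wedge X_\gamma]=[X_\alpha,X_\beta]\wedge X_\gamma+(-1)^{(p-1)q}X_\beta\wedge[X_\alpha,X_\gamma]$. One caveat worth flagging: when you carry out the sign computation in Marle's conventions you obtain the exponent $(p-1)q$, not the $(r-1)q$ printed in the statement, so the displayed exponent appears to be a typo in the paper.
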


Furthermore,

\begin{proposition}
\label{prop:Weak_Laibniz_Rule}
The Jacobi bracket satisfies the \textit{weak Leibniz rule}, namely
\[\supp \{\alpha,\beta\} \subseteq \supp \alpha \cap \supp \beta\,,\]
for every pair of Hamiltonian forms $\alpha \in \Omega^a _H(M, \Theta)$ and $\beta \in \Omega^b_H(M, \Theta)$, where $\supp\alpha$ denotes the support of $\alpha$.
\end{proposition}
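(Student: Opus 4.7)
The plan is to leverage the explicit formula recorded in Remark \ref{remark:Expression_for_Jacobi_bracket}, namely
\[
\{\alpha,\beta\} \;=\; (-1)^{(p-1)q}\bigl(\Lie_{X_\alpha} - \iota_{V_\alpha}\bigr)\beta,
\]
and observe that the right-hand side is a \emph{local} expression in $\beta$. More precisely, both $\Lie_{X_\alpha}$ (a first-order differential operator on forms) and $\iota_{V_\alpha}$ (a tensorial, i.e.\ zeroth-order, operation) are support-nonincreasing: if $\beta$ vanishes on an open set $U\subseteq M$, then so do $\Lie_{X_\alpha}\beta$ and $\iota_{V_\alpha}\beta$ on $U$. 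Consequently $\{\alpha,\beta\}$ vanishes on $M\setminus\supp\beta$, and taking closures yields $\supp\{\alpha,\beta\}\subseteq \supp\beta$.

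To obtain the other inclusion, I would invoke the graded skew-symmetry of the Jacobi bracket established in Corollary \ref{cor:Jacobi_identity}(i), which gives $\{\alpha,\beta\} = -(-1)^{(p-1)(q-1)}\{\beta,\alpha\}$. Applying the previous argument to $\{\beta,\alpha\}$ with the roles of the two forms exchanged (so that one uses the alternate expression $(-1)^{(q-1)p}(\Lie_{X_\beta}-\iota_{V_\beta})\alpha$), the same locality reasoning yields $\supp\{\beta,\alpha\}\subseteq \supp\alpha$, and hence $\supp\{\alpha,\beta\}\subseteq\supp\alpha$. Intersecting the two inclusions produces the desired conclusion $\supp\{\alpha,\beta\}\subseteq \supp\alpha\cap\supp\beta$.

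There is essentially no serious obstacle here: the whole content is the observation that the bracket, although defined through the a priori nonlocal datum of an infinitesimal conformal transformation $X_\alpha$, actually acts on $\beta$ through strictly local operators. The only subtlety worth flagging in the write-up is the justification that $\iota_{V_\alpha}\beta$ and $\Lie_{X_\alpha}\beta$ vanish wherever $\beta$ does on an open set, which follows from the pointwise/first-order nature of interior product and Lie derivative of forms along multivector fields (see, e.g., \cite{For_03}). No hypothesis on $\Theta$ beyond what is already in force is needed.
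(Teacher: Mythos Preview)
Your proof is correct. The paper's argument is slightly different in emphasis: rather than invoking the locality of the operators $\Lie_{X_\alpha}$ and $\iota_{V_\alpha}$ acting on $\beta$, it appeals directly to the well-definedness of the bracket (Theorem~\ref{thm:Well_definedness}) to observe that on any open set where $\alpha$ (respectively $\beta$) vanishes, one may simply choose $X_\alpha$ (respectively $X_\beta$) to be zero there, so that $[X_\alpha,X_\beta]$ and hence $\{\alpha,\beta\}=-\iota_{[X_\alpha,X_\beta]}\Theta$ vanish on that set. Your route has the minor advantage of not needing to construct or patch a particular local choice of $X_\alpha$; the paper's route avoids invoking the explicit formula of Remark~\ref{remark:Expression_for_Jacobi_bracket} and the skew-symmetry step. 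Both are equally short and rest on the same underlying point, namely that the bracket is a local operation.
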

\begin{proof} Indeed, this follows by noting that if $\alpha$ or $\beta$ vanish on an open subset, so does their bracket, since we can choose $X_\alpha$ or $X_\beta$ to be zero on this subset.
\end{proof}

This result, along with Corollary \ref{cor:Jacobi_identity} motivates the name graded Jacobi bracket given in Definition \ref{def:Jacobi_bracket}.

Observe that this endows the set of conformal Hamiltonian forms with a structure of a Gerstenhaber algebra \cite{Ger_63}, and the map $X\mapsto -\inn{X}\Theta$ is a Gerstenhaber algebra homomorphism between infinitesimal conformal transformations and conformal Hamiltonian forms.

\begin{remark}
    It is worth noting that, in the case where $\Theta = \eta$ defines a contact form on $M$, the cup product $f\vee g$ of two functions vanishes. Thus, we only have the weak Leibniz rule in the contact case.
\end{remark}

%{Tenemos que definir, porque creo que no lo hemos hecho, una forma premultisimpléctica como una forma cualquiera cerrada, y regular si el 1-núcleo es 0}
Let us turn into discussing an immediate relation between the brackets defined and the Poisson bracket defined by a multisymplectic form.

\begin{definition} A \textbf{pre-multisymplectic form} on $M$ is a closed $(n+1)$-form $\Omega \in \Omega^{n+1}(M)$. A pre-multisymplectic form is said to be \textbf{multisymplectic} or \textbf{non-degenerate} if the vector bundle map $\flat\colon \T M \rightarrow \bigwedge ^{n} M$ defined by $\flat(v) := \iota_v \Omega$ is a monomorphism.
\end{definition}

Every closed form induces a bracket in a suitable subspace of forms, which we define now. 

\begin{definition} Let $(M, \Omega)$ with $\Omega \in \Omega^{n+1}(M)$ be a (pre-)multisymplectic manifold. Then, a form $\alpha \in \Omega^{a}(M)$ is called \textbf{(pre-)multisymplectic Hamiltonian} if there exists a multivector field $X_\alpha \in \X^{n - a}(M)$ such that $\iota_{X_\alpha} \Omega = \d \alpha$. We denote the space of multisymplectic Hamiltonian $a$-forms by $\Omega^{a}_{H}(M, \Omega)$. If the form to be used is clear from the context we simply write $\Omega_H^a(M)$.
\end{definition}

\begin{remark} (Pre-)multisymplectic Hamiltonian forms are usually referred to as Hamiltonian. We prefer to incorporate this longer name in order to avoid confusion with conformal Hamiltonian forms.
\end{remark}

\begin{definition} Let $(M, \Omega)$, where $\Omega \in \Omega^{n+1}(M)$, be a pre-multisymplectic manifold and let $\alpha \in \Omega^{n-p}_H(M)$, $\beta \in \Omega^{n - q}_H(M)$ be pre-multisymplectic Hamiltonian forms. Their \textbf{Poisson bracket} is defined as \[\{\alpha, \beta\}_P = (-1)^{q-1}\iota_{X_\alpha \wedge X_\beta} \Omega\,.\]
\end{definition}

See \cite{CIL_96,CIL_99,RW_19} for further details on Poisson brackets in multisymplectic manifolds.

\begin{remark} The Jacobi bracket associated to a differential $n$-form generalizes the bracket of $1$-forms in symplectic geometry (see \cite{Vai_94}). Indeed, let $\Omega \in \Omega^{n+1}(M)$ be a multisymplectic form. Then, given multisymplectic Hamiltonian forms $\widetilde \alpha \in \Omega^{n-p}_{H}(M, \Omega)$ and $\widetilde \beta \in \Omega^{n-q}_{H}(M, \Omega)$, we have that their Poisson bracket is given by
\[\{\widetilde \alpha , \widetilde \beta\}_{P} = (-1)^{q-1} \iota_{X_{\widetilde \alpha} \wedge X_{\widetilde \beta}} \Omega\,,\]
where $X_{\widetilde \alpha} \in \X^p(M), X_{\widetilde \beta} \in \X^q(M)$ are multivector fields satisfying
\[
\iota_{X_{\widetilde \alpha}} \Omega = \d \widetilde \alpha\qquad\text{and}\qquad \iota_{X_{\widetilde\beta}} \Omega = \d \widetilde \beta\,.
\]
Then, both $X_{\widetilde \alpha}$ and $X_{\widetilde \beta}$ satisfy $\Lie_{X_{\widetilde \alpha}} \Omega = \Lie_{X_{\widetilde \beta}} \Omega= 0$ and, in particular, they define infinitesimal conformal transformations. Hence, $\d \widetilde \alpha$ and $\d \widetilde \beta$ may be interpreted as conformal Hamiltonian forms in the sense of Definition \ref{def:conformal_Hamiltonian_form} for the multivector fields $X_{\d \widetilde \alpha} = -X_{\widetilde \alpha}$ and $X_{\d \widetilde \beta} =- X_{\widetilde \beta}$, with conformal factors $V_{\d \widetilde \alpha} = 0$ and $V_{\d \widetilde \beta} = 0$, respectively. The Jacobi bracket of these two forms is related to the Poisson bracket of multisymplectic geometry as follows:
\begin{align*}
    \{\d \widetilde \alpha, \d \widetilde \beta\} &= - \inn{[X_{\widetilde \alpha}, X_{\widetilde \beta}]} \Omega = - (-1)^{(p-1)q} \Lie_{X_{\widetilde \alpha}} \inn{X_{\widetilde \beta}} \Omega + \inn{X_{\widetilde \beta}}\Lie_{X_{\widetilde \alpha}} \Omega \\
    &= - (-1)^{(p-1)q} \Lie_{X_{\widetilde \alpha}} \inn{X_{\widetilde \beta}} \Omega = - (-1)^{(p-1)q} \d \iota_{X_{\widetilde \alpha}} \iota_{X_{\widetilde \beta}} \Omega = (-1)^{q-1} \d \iota_{X_{\widetilde \alpha} \wedge X_{\widetilde \beta}} \Omega \\
    &= \d \{\widetilde \alpha, \widetilde \beta\}_P\,.
\end{align*}
As a particular case, for $n = 1$, we obtain $\{\d f, \d g\} = \d\{f, g\}_P$, where $\{f, g\}_P$ is the Poisson bracket of two functions from symplectic geometry (see \cite{AM_78,Arn_89}). Hence, we can think of the graded Jacobi bracket as a generalization of the graded Poisson bracket.
\end{remark}

\section{Multicontact structures and multisymplectization}
\label{section:multisymplectization}

In this section we study further relations between Jacobi and
Poisson brackets generalizing a procedure known in contact geometry: symplectization.

\subsection{General (pre-)multisymplectizations}
\begin{definition} An \textbf{homogeneous (pre-)multisymplectic} manifold is a pair $(\widetilde M, \Upsilon)$, where $\Upsilon \in \Omega^n(\widetilde M)$ is such that $\Omega = - \d \Upsilon$ defines a (pre-)multisymplectic form on $M$ and such that there exists a vector field $\Delta$, called \textbf{a Liouville vector field} satisfying $\iota_\Delta \Omega =  -\Upsilon$.
\end{definition}

\begin{remark} In the case of symplectic geometry, a homogeneous symplectic structure is simply an exact symplectic structure $\omega = -\d \upsilon$, since there always exists a vector field $\Delta$ such that $\iota_\Delta \omega = - \upsilon$. However, the same observation does not hold in multisymplectic geometry since, in general, we cannot guarantee the existence of a vector field $\Delta$ satisfying $\iota_{\Delta} \Omega = - \Upsilon$.
\end{remark}

Let $\Theta$ be an $n$-form on a smooth manifold $M$. 

\begin{definition}
\label{def:pre_multisymplectization}
Let $\tau\colon \widetilde M \rightarrow M$ be a (locally trivial) fiber bundle, where $\widetilde M$ is endowed with a homogeneous (pre-)multisymplectic structure $\Omega = - \d \Upsilon$. Then, it is said to be a \linebreak \textbf{(pre-)multisymplectization} of $(M, \Theta)$ if it admits a vertical Liouville vector field $\Delta$ with respect to the projection $\tau$, and $\Upsilon = \phi \cdot\tau^\ast \Theta$ for certain nowhere vanishing function $\phi \in \Cinfty(\widetilde M)$, called \textbf{the conformal factor}.
\end{definition}
{\begin{remark}
\label{remark:Delta_applied_to_phi}
Notice that from $\iota_\Delta \d (\phi\,\tau^\ast\Theta) = \phi\, \tau^\ast\Theta$ we must have $\Delta(\phi) \tau^\ast \Theta = \phi\, \tau^\ast \Theta$, so that if $\Theta$ is nowhere zero, we necessarily obtain $\Delta(\phi) = \phi$. A similar argument can be made for an $n$-form $\Theta$ that does not vanish on a dense subset of $M$.
\end{remark}
}
%Let us denote $\mathbb{R}_\times := \mathbb{R} \setminus \{0\}$.

\begin{definition}
\label{def:canonical_premultisymplectization}
The \textbf{canonical (pre-)multisymplectization} of $(M, \Theta)$ is defined as $M \times \mathbb{R}_\times$, together with the homogeneous (pre-)multisymplectic structure $\Omega = - \d (z\,\Theta)$, where $z$ is the canonical coordinate in $\mathbb{R}_\times$. Notice that we can take as Liouville vector field $\Delta := z\pdv{z}$ and as conformal factor $\phi := z$.
\end{definition}

\begin{remark} A different choice of canonical multisymplectization could be $M \times \mathbb{R}$, with nowhere vanishing conformal factor $\phi(p, t) := e^t$. Both approaches are diffeomorphic (disregarding connected components), using the exponential mapping.
\end{remark}

\begin{proposition} 
\label{prop:multisymplectization}
Provided that $\ker_1 \d \Theta \neq \{0\}$, the canonical (pre-)multisymplectization of $(M, \Theta)$, namely $(M \times \mathbb{R}_\times, \Omega :=- \d(z \,\Theta))$, is a non-degenerate homogeneous multisymplectic manifold if and only if 
\[\ker_1 \Theta \cap \ker_1 \d \Theta = \{0\}\,.\]
\end{proposition}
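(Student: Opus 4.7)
The plan is to characterize when $\iota_v\Omega = 0$ for an arbitrary tangent vector $v$ on $\widetilde M := M\times\mathbb{R}_\times$. Writing $\Omega = -\d(z\,\Theta) = -\d z\wedge\Theta - z\,\d\Theta$ (where $\Theta,\d\Theta$ denote pullbacks to $\widetilde M$) and decomposing any $v\in\T_{(m,z)}\widetilde M$ uniquely as $v = u + a\,\partial_z$ with $u\in\T_m M$ and $a\in\mathbb{R}$, a direct contraction using $\d z(u)=0$ and $\iota_{\partial_z}\Theta = \iota_{\partial_z}\d\Theta = 0$ yields
\[
\iota_v\Omega \;=\; \d z\wedge\iota_u\Theta \;-\; z\,\iota_u\d\Theta \;-\; a\,\Theta.
\]
Because any $n$-form on $\widetilde M$ decomposes uniquely into a summand containing $\d z$ and one pulled back from $M$, the equation $\iota_v\Omega = 0$ is equivalent to the pair of independent conditions
\[
\iota_u\Theta = 0 \qquad\text{and}\qquad z\,\iota_u\d\Theta + a\,\Theta = 0.
\]

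For the "only if" direction, I would take an arbitrary $u\in\ker_1\Theta\cap\ker_1\d\Theta$ and consider $v = u$ (i.e.\ $a=0$). Both conditions are trivially satisfied, so non-degeneracy of $\Omega$ forces $v = 0$, hence $u = 0$, establishing $\ker_1\Theta\cap\ker_1\d\Theta=\{0\}$.

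For the converse, assume $\ker_1\Theta\cap\ker_1\d\Theta=\{0\}$ together with $\ker_1\d\Theta\neq\{0\}$, and suppose $\iota_v\Omega = 0$. The first condition puts $u\in\ker_1\Theta$. If $a = 0$, the second becomes $z\,\iota_u\d\Theta = 0$, so $u\in\ker_1\d\Theta$ (since $z\neq 0$) and therefore $u = 0$. If instead $a\neq 0$, I would contract the identity $z\,\iota_u\d\Theta = -a\,\Theta$ with a non-zero $w\in\ker_1\d\Theta$, whose existence is exactly the second hypothesis: using $\iota_w\iota_u = -\iota_u\iota_w$ together with $\iota_w\d\Theta = 0$, the left-hand side vanishes, leaving $a\,\iota_w\Theta = 0$, hence $\iota_w\Theta = 0$. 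Then $w\in\ker_1\Theta\cap\ker_1\d\Theta=\{0\}$, contradicting $w\neq 0$. The main subtlety is spotting where each hypothesis enters: the intersection condition handles the horizontal case $a = 0$, while the non-triviality of $\ker_1\d\Theta$ is precisely what rules out the mixed case $a\neq 0$.
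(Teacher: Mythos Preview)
Your proof is correct and follows essentially the same approach as the paper: both compute $\iota_v\Omega = \d z\wedge\iota_u\Theta - z\,\iota_u\d\Theta - a\,\Theta$, separate the $\d z$-component to obtain $\iota_u\Theta = 0$, and then contract the remaining equation with a non-zero $w\in\ker_1\d\Theta$ to force the $\partial_z$-coefficient to vanish. The only cosmetic difference is that the paper proves $a=0$ directly (avoiding your case split) by observing that $\iota_w\Theta\neq 0$ follows immediately from $w\neq 0$ and the intersection hypothesis, whence $a\,\iota_w\Theta=0$ gives $a=0$; your contradiction argument in the case $a\neq 0$ amounts to the same thing.
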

\begin{proof}
Suppose first that $\ker_1 \Theta \cap \ker_1 \d \Theta = \{0\}$ and let $u \in \T (M \times \mathbb{R}_\times)$ such that $\inn{u} \Omega = 0$. Identifying $\T_{(p, a)} (M \times \mathbb{R}_\times) =\T_pM\times\T_a\R_\times\cong \T_p M \times \mathbb{R}$, we can write
\[u = v + \gamma \pdv{z}\,,\]
where $v \in \T_p M$ and $\gamma \in \mathbb{R}$. Then, $\iota_u \Omega = 0$ implies 
\[\d z \wedge \inn{v} \Theta - z \inn{v} \d \Theta - \gamma \Theta = 0\,.\]
We will prove that $v = 0$ and $\gamma = 0$. Contracting with $\pdv{z}$ yields $\iota_v \Theta = 0$. Now, since $\ker_1 \d \Theta \neq 0$, taking $w \in \ker_1 \d \Theta\setminus \{0\}$ and contracting by it we have $- \gamma \iota_w \Theta = 0$, which implies $\gamma = 0$, since $w \not \in \ker_1 \Theta$. Finally, we must have $z\, \iota_v \d \Theta = 0$ which implies $\iota_v \d \Theta = 0$, given that $z \neq 0$. We conclude that $v = 0$, since $v \in \ker _1 \Theta \cap \ker_1 \d \Theta = \{0\}$.

Conversely, assume that $- \d (z \, \Theta)$ defines a non-degenerate multisymplectic form. Consider $v \in \T M$ such that $\iota_v \Theta = \iota_v \d \Theta = 0$. Then, identifying $\T (M \times \mathbb{R}_\times) = \T M \times \T \mathbb{R}_\times$ we have $\iota_u \d (z\,\Theta) = 0$, which implies $u = 0$, as we wanted to see.
\end{proof}

%\begin{remark} Let $\tau\colon \widetilde M \rightarrow M$ be a (pre-)multisymplectization with Liouville vector field $\Delta$ and conformal factor $\phi$. Then, $\Delta(\phi) = \phi$. Indeed, from $\iota_\Delta \Omega = - \Upsilon$ we obtain \[\Delta(\phi) \cdot \Theta = \iota_\Delta (\d \phi \wedge \tau^\ast \Theta + \phi\, \tau^\ast \Theta) = \phi \cdot \Theta\]
%\end{remark}

%\begin{remark} A pair $(M, \Theta)$ is a multicontact manifold if and only if its canonical multisymplectization $(M \times \mathbb{R_\times}, - \d(z \Theta))$ is a non degenerate multisymplectic manifold. \end{remark}

Let us turn back to the theory of arbitrary (pre-)multisymplectizations. In particular, we will prove that every infinitesimal conformal transformation on the base can be lifted to a homogenous multivector field on the (pre-)multisymplectization.

\begin{theorem}
\label{thm:multivector_field_lift}
Let {$\Theta$ be a nowhere vanishing $n$-form on $M$ and} let
$\tau\colon \widetilde M \rightarrow M$, with $(\widetilde M, - \d \Upsilon)$ a (pre-)multisymplectization of $(M, \Theta)$ with conformal factor $\phi \in \Cinfty(\widetilde M)$. Then, for every infinitesimal conformal transformation $X \in \X^p(M)$ of $\Theta$ there exists a unique-up-to-$\ker_p \d\Upsilon$ multivector field $\widetilde X \in \X^p(\widetilde M)$ such that
\[
\Lie_{\widetilde X} \Upsilon = 0 \qquad\text{and}\qquad \tau_\ast \widetilde X = X\,.
\]
%Furthermore, the Hamiltonian form (in the sense of (pre-)multisymplectic geometry) associated to $\widetilde X$ can be taken as $\phi\, \tau^\ast \alpha$, where $\alpha = - \inn{X} \Theta$.
\end{theorem}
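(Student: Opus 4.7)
The plan is to split the argument into existence and uniqueness. Throughout, I would lean on two basic identities that follow from the data: $\iota_\Delta \Upsilon = 0$ and $\Lie_\Delta \Upsilon = \Upsilon$. The first holds because $\tau_\ast \Delta = 0$ implies $\iota_\Delta \tau^\ast \Theta = 0$; the second follows from Cartan's formula together with $\iota_\Delta \d\Upsilon = \Upsilon$. The nowhere-vanishing hypothesis on $\Theta$ enters through Remark \ref{remark:Delta_applied_to_phi}, which gives $\Delta(\phi)=\phi$.

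\emph{Existence.} I would begin by choosing any $\tau$-projectable lifts $\overline X \in \mathfrak{X}^p(\widetilde M)$ of $X$ and $\widetilde V \in \mathfrak{X}^{p-1}(\widetilde M)$ of $V$. Such lifts exist globally, either via an Ehresmann connection on the fiber bundle $\tau$ or by gluing lifts from local trivializations with a partition of unity (which preserves projectability since $\sum \rho_i \tau_\ast \overline X_i = X$). From $\tau_\ast \overline X = X$ one obtains $\iota_{\overline X} \tau^\ast \omega = \tau^\ast \iota_X \omega$ for every form $\omega$ on $M$, and hence $\Lie_{\overline X} \tau^\ast \Theta = \tau^\ast \Lie_X \Theta = \iota_{\widetilde V} \tau^\ast \Theta$. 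Expanding $\Lie_{\overline X}(\phi\,\tau^\ast \Theta)$ via the algebraic identity
\[\Lie_U (f\alpha) = f\,\Lie_U \alpha + (-1)^{p+1} \iota_{\iota_{\d f} U}\,\alpha \qquad (U \in \mathfrak{X}^p),\]
which itself follows from $\iota_U(\d f\wedge\alpha) = \iota_{\iota_{\d f}U}\,\alpha + (-1)^p \d f\wedge\iota_U\alpha$, yields $\Lie_{\overline X}\Upsilon = \iota_W \tau^\ast\Theta$ with $W := \phi\,\widetilde V + (-1)^{p+1}\iota_{\d\phi}\overline X \in \mathfrak{X}^{p-1}(\widetilde M)$.

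The candidate lift is then $\widetilde X := \overline X - \Delta\wedge W'$, where $W' := (-1)^{p-1}\phi^{-1} W$. This is well defined because the conformal factor $\phi$ is nowhere zero, and verticality of $\Delta$ preserves the projection: $\tau_\ast \widetilde X = X$. To verify $\Lie_{\widetilde X}\Upsilon = 0$, I would apply the product formula $\Lie_{X\wedge Y}\alpha = \Lie_Y\iota_X\alpha + (-1)^q \iota_Y\Lie_X\alpha$ (stated in the excerpt) with $X = \Delta$, $Y = W'$, and use $\iota_\Delta \Upsilon = 0$ and $\Lie_\Delta \Upsilon = \Upsilon$, giving $\Lie_{\Delta\wedge W'}\Upsilon = (-1)^{p-1}\iota_{W'}\Upsilon = (-1)^{p-1}\phi\,\iota_{W'}\tau^\ast\Theta = \iota_W\tau^\ast\Theta$, so that $\Lie_{\widetilde X}\Upsilon = \Lie_{\overline X}\Upsilon - \Lie_{\Delta\wedge W'}\Upsilon = 0$.

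\emph{Uniqueness up to $\ker_p\d\Upsilon$.} If $\widetilde X_1,\widetilde X_2$ both satisfy the conclusion, set $Y := \widetilde X_1 - \widetilde X_2$. Then $\tau_\ast Y = 0$, and the defining property of the pullback gives $\iota_Y\tau^\ast\Theta = 0$ (evaluated on any $(n-p)$-vector $Z$, it equals $\Theta(\tau_\ast Y\wedge\tau_\ast Z) = 0$), so $\iota_Y\Upsilon = \phi\,\iota_Y\tau^\ast\Theta = 0$. Substituting into $\Lie_Y\Upsilon = \d\iota_Y\Upsilon - (-1)^p \iota_Y\d\Upsilon$ and using $\Lie_Y \Upsilon = 0$ yields $\iota_Y\d\Upsilon = 0$, i.e., $Y\in\ker_p\d\Upsilon$. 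The main obstacle, as I see it, is identifying the correct correction $W'$ in the existence step: when $\overline X$ is an arbitrary projectable lift rather than a ``trivial'' one on a product, the term $\iota_{\d\phi}\overline X$ does not vanish and must be absorbed into the vertical part through the factor $\phi^{-1}$, which is precisely where non-vanishing of $\phi$ is used.
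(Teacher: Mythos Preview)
Your argument is correct, and the uniqueness half is essentially identical to the paper's. The existence half, however, follows a genuinely different route. The paper first builds a specific Ehresmann connection $H\subseteq\ker\d\phi$ (Lemma~\ref{lemma:connection}) and then sets $\widetilde X := X^h + (-1)^p\,\Delta\wedge V^h$, verifying $\Lie_{\widetilde X}\Upsilon=0$ by a direct expansion that exploits $\iota_{X^h}(\d\phi\wedge\tau^\ast\alpha)=(-1)^p\d\phi\wedge\tau^\ast(\iota_X\alpha)$ (Lemma~\ref{lemma:contraction}). You instead take an \emph{arbitrary} projectable lift $\overline X$, use the algebraic identity $\Lie_U(f\alpha)=f\Lie_U\alpha+(-1)^{p+1}\iota_{\iota_{\d f}U}\alpha$ to compute the defect $\Lie_{\overline X}\Upsilon=\iota_W\tau^\ast\Theta$, and kill it with the vertical correction $-\Delta\wedge(-1)^{p-1}\phi^{-1}W$, using only $\iota_\Delta\Upsilon=0$ and $\Lie_\Delta\Upsilon=\Upsilon$.

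Your approach is more self-contained: it avoids the two auxiliary lemmas and works for any projectable lift, at the cost of the correction term $W'$ being less explicit (it involves $\iota_{\d\phi}\overline X$, which depends on the chosen lift). The paper's choice of the connection $H\subseteq\ker\d\phi$ makes this extra term vanish, yielding the cleaner closed formula $\widetilde X = X^h + (-1)^p\Delta\wedge V^h$; that explicit form is then reused verbatim in the proof of Theorem~\ref{thm:multisymplectization_and_bracket}, so the investment in Lemmas~\ref{lemma:connection}--\ref{lemma:contraction} pays off downstream.
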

To prove this result, we will need the existence of a particular Ehresmann connection on $\tau\colon \widetilde M \rightarrow M$, namely a subbundle $H \subseteq \T \widetilde M$ such that $\ker \d \tau \oplus H =\T \widetilde M$ and $H \subseteq \ker \d \phi$. The following lemma ensures the existence of such a connection.
\begin{lemma}
\label{lemma:connection}
There exists an Ehresmann connection $H$ on $\tau\colon \widetilde M \rightarrow M$ such that $H \subseteq \ker \d \phi$.
\end{lemma}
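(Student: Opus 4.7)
The plan is to exploit the Liouville vector field $\Delta$ as a transverse direction to $\ker \d\phi$ inside the vertical bundle $V := \ker \d\tau$, and to use this to correct any standard Ehresmann connection into one whose horizontal subspaces lie in $\ker \d\phi$. The key preliminary observation is that $\d\phi$ is nowhere vanishing: by Remark \ref{remark:Delta_applied_to_phi} (whose hypothesis is inherited from the ambient situation where $\Theta$ will not vanish) we have $\d\phi(\Delta) = \Delta(\phi) = \phi$, which never vanishes since the conformal factor is nowhere zero by Definition \ref{def:pre_multisymplectization}. In particular $K := \ker \d\phi \subset \T\widetilde M$ is a smooth subbundle of codimension one, $\Delta$ is nowhere contained in $K$, and the vertical vector field
\[
s := \frac{1}{\phi}\,\Delta \in \X(\widetilde M)
\]
is well-defined, smooth, and satisfies $\d\phi(s)\equiv 1$ and $\d\tau(s) = 0$.

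Next, I would pick any Ehresmann connection $H_0 \subseteq \T\widetilde M$ on the locally trivial fiber bundle $\tau\colon \widetilde M \to M$; the existence of such $H_0$ is standard via a partition-of-unity argument. Then define the corrected distribution pointwise by
\[
H_{\widetilde m} := \bigl\{\, X - \d\phi(X)\, s_{\widetilde m} \,:\, X \in (H_0)_{\widetilde m}\,\bigr\}, \qquad \widetilde m \in \widetilde M.
\]
Smoothness of $H$ is clear since it is the image of the smooth bundle map $H_0 \to \T\widetilde M$, $X \mapsto X - \d\phi(X)\, s$.

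Verification of the three required properties is then straightforward. For the containment in $\ker\d\phi$ one computes $\d\phi(X - \d\phi(X)\, s) = \d\phi(X) - \d\phi(X) = 0$. For the complement property, because $s$ is vertical one has $\d\tau(X - \d\phi(X)\, s) = \d\tau(X)$, so $\d\tau|_H : H \to \tau^\ast \T M$ is a fiberwise isomorphism (inherited from $\d\tau|_{H_0}$), which forces both $H \cap V = 0$ and $H + V = \T\widetilde M$. I do not foresee any serious obstacle here: the only conceptual point is to recognize $\Delta$ itself as the natural transverse direction to $\ker\d\phi$ among vertical vectors, after which the construction is a mere algebraic correction.
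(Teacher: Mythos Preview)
Your argument is correct and complete. The approach, however, differs from the paper's. You start from an arbitrary Ehresmann connection $H_0$ and correct it along the specific vertical direction $s = \phi^{-1}\Delta$ so that the result lands in $\ker\d\phi$; the paper instead works intrinsically inside the codimension-one subbundle $\ker\d\phi$, observes that $\ker\d\phi + \ker\d\tau = \T\widetilde M$ (so that $\ker\d\phi \cap \ker\d\tau$ has constant rank), and then splits the short exact sequence
\[
0 \longrightarrow \ker\d\phi \cap \ker\d\tau \longrightarrow \ker\d\phi \longrightarrow \ker\d\phi / (\ker\d\phi \cap \ker\d\tau) \longrightarrow 0
\]
by taking an orthogonal complement with respect to an auxiliary Euclidean structure on $\ker\d\phi$. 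Your construction is more explicit and exploits the Liouville vector field directly, which is natural in this setting and yields the projection formula $X \mapsto X - \d\phi(X)\,s$ in one line; the paper's argument is slightly more abstract and does not presuppose a starting connection, trading that for the choice of a fiber metric. Both routes ultimately rest on a partition-of-unity existence result (for $H_0$ in your case, for the Euclidean structure in the paper's), so neither is materially more elementary than the other.
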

\begin{proof}
Notice that we have $\ker \d \tau +\ker\d \phi = \T \widetilde M$. Indeed, {following Remark \ref{remark:Delta_applied_to_phi}, we have} $\d \phi(\Delta) = \phi$, which implies $\d \phi \neq 0$ and $\Delta \not \in \ker \d \phi$. Hence,
\[\rank \ker \d_p\phi = \dim \widetilde M - 1\,.\]
Now, since $\Delta_p \notin \ker \d_p \phi$, by a dimension comparison and using that $\Delta$ is $\tau$-vertical, we must have
\[\T \widetilde M = \ker \d\phi \oplus \langle \Delta\rangle \subseteq \ker \d \phi + \ker \d\tau\,.\]
Furthermore, since $\ker \d \phi + \ker \d \tau = \T \widetilde M$, we have that $\ker \d \phi \cap \ker \d \tau$ has constant rank and, therefore, $\ker \d \phi / (\ker \d \phi \cap \ker \d \tau) \rightarrow M$ is a well-defined vector bundle. Notice that this induces a short exact sequence
\[0 \longrightarrow \ker \d \phi \cap \ker \d \tau \longrightarrow \ker \d \phi \longrightarrow \ker \d \phi /(\ker \d \phi \cap \ker \d \tau) \longrightarrow 0\,.\] 
Taking an arbitrary Euclidean structure on $\ker \d \phi \rightarrow \widetilde M$, we may define $H$ through a splitting of the previous short exact sequence. It suffices to take $H$ as the orthogonal complement of $\ker \d \phi \cap \ker \d \tau$ with respect to the chosen inner product. Indeed, by construction, it satisfies $H \cap \ker \d \tau = \{0\}$ and, furthermore, \[\rank H = \rank \ker \d \phi - \rank (\ker \d \phi  \cap \ker \d \tau) = \rank \T \widetilde M - \rank  \ker \d \tau,\]
so that $H \oplus \ker \d \tau = \T \widetilde M$, defining the desired Ehresmann connection and finishing the proof.
\end{proof}
An Ehresmann connection $H$ on $\tau\colon\widetilde M \rightarrow M$ induces a  map $\X^p(M) \to \X^p(\widetilde M)$ given by
\[U = X_1 \wedge \cdots \wedge X_p \longmapsto U^h := X_1^h \wedge \cdots \wedge X_p^h\,,\]
where $X_i^h$ is the unique vector field on $\widetilde M$ satisfying 
\[\tau_\ast X_i^h = X_i\qquad\text{and}\qquad X_i^h \in H\,.\]
\begin{lemma}
\label{lemma:contraction}
Let $H$ be the Ehresmann connection from Lemma \ref{lemma:connection}. Then, for every $X \in \X^p(M)$ and $\alpha \in \Omega^a(M)$, we have
\[
\iota_{X^h} (\d \phi \wedge \tau^\ast \alpha) = (-1)^p \d \phi \wedge \tau^\ast(\iota_X \alpha)\,.
\]
\end{lemma}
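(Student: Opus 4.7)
The approach rests on the defining property of the Ehresmann connection $H$ from Lemma \ref{lemma:connection}: since $H \subseteq \ker \d\phi$, every horizontal vector field is annihilated by $\d\phi$, and in particular $\iota_{X_i^h}\d\phi = 0$ for every horizontal lift $X_i^h$. Combined with the naturality of pullback contraction, namely $\iota_{X_i^h}\tau^\ast\alpha = \tau^\ast(\iota_{X_i}\alpha)$ (which holds because $\tau_\ast X_i^h = X_i$), this is essentially the only non-trivial input, and the rest is sign bookkeeping.

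By linearity it suffices to treat the decomposable case $X = X_1 \wedge \cdots \wedge X_p$, where by definition $X^h = X_1^h \wedge \cdots \wedge X_p^h$. I would proceed by induction on $p$. For $p = 1$, the ordinary graded Leibniz rule for contraction by a vector field gives
\[
\iota_{X_1^h}(\d\phi \wedge \tau^\ast\alpha) = (\iota_{X_1^h}\d\phi)\,\tau^\ast\alpha - \d\phi \wedge \iota_{X_1^h}\tau^\ast\alpha = -\d\phi \wedge \tau^\ast(\iota_{X_1}\alpha),
\]
which matches the claim with sign $(-1)^1$. For the inductive step, I would write $X^h = Y^h \wedge X_p^h$ with $Y = X_1 \wedge \cdots \wedge X_{p-1}$, factor the multivector contraction accordingly, apply the induction hypothesis to $Y^h$, and then apply the $p = 1$ case to the remaining contraction against $X_p^h$. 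Each of the two resulting contractions with a factor of $\d\phi$ contributes one sign flip, so the total sign after $p$ iterations is $(-1)^p$, as required.

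The only real obstacle is sign bookkeeping, and this is handled uniformly by iterating the $p=1$ identity. A cleaner, non-inductive alternative is also available: expanding $\iota_{X^h}(\d\phi \wedge \tau^\ast\alpha)$ via the coproduct formula for the contraction of a multivector with a wedge product, one sees that every term in which any $X_i^h$ is contracted against $\d\phi$ vanishes, leaving a single surviving term in which $\d\phi$ passes through $X^h$ untouched and picks up the sign $(-1)^p$ from commuting a degree-one form past a degree-$p$ multivector.
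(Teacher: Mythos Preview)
Your proposal is correct and follows essentially the same approach as the paper: reduce to decomposable $X = X_1\wedge\cdots\wedge X_p$, then iterate the single-vector case using $X_i^h \in \ker\d\phi$ together with $\iota_{X_i^h}\tau^\ast\alpha = \tau^\ast(\iota_{X_i}\alpha)$. The paper peels off $X_1^h$ first and iterates, whereas you phrase the same iteration as an induction (and also sketch the equivalent coproduct argument), but the substance is identical.
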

\begin{proof} Suppose $X = X_1 \wedge \cdots \wedge X_p$, where $X_i \in \X(M)$. Then, $X^h = X_1^h \wedge \cdots \wedge X^h_p$. Notice that
\begin{align*}
    \iota_{X^h} (\d \phi \wedge \tau^\ast \alpha) &= \iota_{X_1^h \wedge \cdots \wedge X^h_p}(\d \phi \wedge \tau^\ast \alpha) 
    = \iota_{X_2^h \wedge \cdots \wedge X^h_p} \iota_{X_1^h}(\d \phi \wedge \tau^\ast \alpha)\\
    & =  \iota_{X_2^h \wedge \cdots \wedge X^h_p}(\d \phi(X_1^h) \alpha - \d \phi \wedge \tau^\ast \iota_{X_1} \alpha) = -\iota_{X_2^h \wedge \cdots \wedge X^h_p}( \d \phi \wedge \tau^\ast \iota_{X_1} \alpha)\,,
\end{align*}
where in the last equality we have used that $X_1^h \in \ker \d \phi$. Iterating the last argument we conclude that $\iota_{X^h} (\d \phi \wedge \tau^\ast \alpha) = (-1)^p \d \phi \wedge \tau^\ast(\iota_X \alpha)$.
\end{proof}

\begin{proof}[Proof of Theorem \ref{thm:multivector_field_lift}] Let $H$ be the Ehresmann connection from Lemma \ref{lemma:connection} and let $X \in \X^p(M)$ be an infinitesimal conformal transformation of $\Theta$. Define \[\widetilde X := X^h + \Delta \wedge Y^h\,,\] where $Y \in \X^{p-1}(M)$ is a multivector field which is going to be specified later. Notice that $\tau_\ast \widetilde X = X$. We will look for $Y$ such that $0 = \Lie_{\widetilde X} \Upsilon = -\Lie_{\widetilde X} (\phi \cdot \tau^\ast \Theta)$. Using Lemma \ref{lemma:contraction}, we have
\begin{align*}
\Lie_{X^h + \Delta \wedge Y^h} (\phi\cdot  \tau^\ast\Theta) &= \d \iota_{X^h + \Delta \wedge Y^h} (\phi \cdot \tau^\ast \Theta) - (-1)^p \iota_{X^h + \Delta \wedge Y^h} \d (\phi \cdot \tau^\ast \Theta)\\
&= \d (\phi \tau^\ast (\iota_X \Theta)) - (-1)^p\iota_{X^h + \Delta \wedge Y^h} (\d \phi \wedge \tau^\ast \Theta + \phi \tau^\ast \d \Theta) \\
&= \d \phi \wedge \tau^\ast(\iota_X \Theta) + \phi \d(\tau^\ast \iota_X \Theta) - \d \phi \wedge \tau^\ast( \iota_X \Theta)\\
&\quad- (-1)^p\d \phi(\Delta) \tau^\ast ( \iota_Y \Theta)- (-1)^p \phi \tau^\ast (\iota_X \d \Theta)\\
&= \phi \tau^\ast \Lie_X \Theta - (-1)^p\d \phi(\Delta) \tau^\ast ( \iota_Y \Theta) \\
&= \phi \tau^\ast \left( \Lie_X \Theta - (-1)^p  \iota_Y \Theta\right)\,,
\end{align*}
since $\d \phi(\Delta) = \phi$ ({see Remark \ref{remark:Delta_applied_to_phi}}). Recall that $X$ is an infinitesimal conformal transformation and, hence, there exists a multivector field $V \in \X^{p-1}(M)$ such that $\Lie_X \Theta = \iota_V \Theta$. Therefore, it is enough to take $Y := (-1)^p V$ to conclude existence.

Let us show uniqueness-up-to-$\ker_p \d\Upsilon$. Consider multivector fields $\widetilde X_1, \widetilde X_2$ such that $\Lie_{\widetilde X_i} \Upsilon = 0$ and $\tau_\ast \widetilde X_i = X$. Then,
\begin{align*}
    0 &= \Lie_{\widetilde X_i} \Upsilon = - (-1)^p \iota_{\widetilde X_i} \d \Upsilon + \d \iota_{\widetilde X_i} \Upsilon \\
    &= - (-1)^p \iota_{\widetilde X_i} \d \Upsilon+ \d \iota_{\widetilde X_i} (\phi \cdot \tau^\ast \Theta)\\
    &= - (-1)^p \iota_{\widetilde X_i} \d \Upsilon+ \d (\phi \cdot \tau^\ast \iota_X \Theta)\,.
\end{align*}
Hence, \[0 = \iota_{\widetilde X_1} \d \Upsilon - \iota_{\widetilde X_2} \d \Upsilon = \iota_{\widetilde X_1 - \widetilde X_2} \d \Upsilon\,,\]
which finishes the proof.
\end{proof}

Furthermore, we have that the lift of infinitesimal conformal transformations translates into a bracket preserving lift of conformal Hamiltonian forms to (pre-)multisymplectic Hamiltonian forms.
\begin{theorem}
\label{thm:multisymplectization_and_bracket}
Let {$\Theta$ be a nowhere vanishing $n$-form on $M$ and} let $\tau\colon \widetilde M \rightarrow M$, with $(\widetilde M, - \d \Upsilon)$, be a (pre-)multisymplectization of $(M, \Theta)$ with conformal factor $\phi \in \Cinfty(\widetilde M)$. Then, the map 
\[
\begin{array}{rccc}
\Psi\colon & \Omega_H^{n - p}(M, \Theta) & \longrightarrow & \Omega_H^{n- p}(\widetilde M, \Omega  = - \d \Upsilon)\\
& \alpha & \longmapsto & (-1)^{p+1}\phi \cdot \tau^\ast \alpha
\end{array}
\]
is well defined and, for every $\alpha \in \Omega^{n - p}_H(M, \Theta)$ and $\beta \in \Omega^{n - q}_H(M, \Theta)$ it satisfies \[\{\Psi(\alpha), \Psi(\beta)\}_P = \Psi(\{\alpha, \beta\}) {+(-1)^q \d \Psi(\alpha \vee \beta)}\,.\]
\end{theorem}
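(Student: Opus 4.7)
The plan is to lift the Hamiltonian multivector fields of $\alpha,\beta$ to $\widetilde M$ through Theorem~\ref{thm:multivector_field_lift}, verify that $\Psi(\alpha)$ is multisymplectic Hamiltonian via a single Cartan-type identity, and then compute the Poisson bracket directly, recognizing the output as $\Psi(\{\alpha,\beta\})+(-1)^q\d\Psi(\alpha\vee\beta)$ by exploiting the Jacobi bracket expression of Remark~\ref{remark:Expression_for_Jacobi_bracket}.

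For well-definedness, I would apply Theorem~\ref{thm:multivector_field_lift} to produce $\widetilde X_\alpha\in\X^p(\widetilde M)$ with $\Lie_{\widetilde X_\alpha}\Upsilon=0$ and $\tau_\ast\widetilde X_\alpha=X_\alpha$. Using the explicit form $\widetilde X_\alpha=X_\alpha^h+\Delta\wedge Y_\alpha^h$ from the proof of that theorem, together with the verticality of $\Delta$ (so $\iota_\Delta\tau^\ast\Theta=0$), one finds $\iota_{\widetilde X_\alpha}\Upsilon=-\phi\,\tau^\ast\alpha$. The identity $\d\iota_{\widetilde X_\alpha}\Upsilon=(-1)^p\iota_{\widetilde X_\alpha}\d\Upsilon$, a consequence of $\Lie_{\widetilde X_\alpha}\Upsilon=0$, yields $\iota_{\widetilde X_\alpha}\Omega=-\d\Psi(\alpha)$, so that $-\widetilde X_\alpha$ is a multisymplectic Hamiltonian multivector field for $\Psi(\alpha)$, and $\Psi$ maps into $\Omega^{n-p}_H(\widetilde M,\Omega)$.

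The heart of the proof is identifying $\Lie_{\widetilde X_\beta}(\phi\,\tau^\ast\alpha)$. Decomposing $\widetilde X_\beta=X_\beta^h+\Delta\wedge Y_\beta^h$ with $Y_\beta=(-1)^qV_\beta$, and expanding $\Lie_{\widetilde X_\beta}=\d\iota_{\widetilde X_\beta}-(-1)^q\iota_{\widetilde X_\beta}\d$ on $\phi\,\tau^\ast\alpha$, I would handle the $\d\phi\wedge\tau^\ast$ pieces with Lemma~\ref{lemma:contraction} and $\iota_\Delta\d\phi=\phi$ (Remark~\ref{remark:Delta_applied_to_phi}), while the remaining $\tau^\ast$ pullback terms are trivialized by $\iota_\Delta\tau^\ast=0$. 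After simplification two terms cancel and one obtains
\[
\Lie_{\widetilde X_\beta}(\phi\,\tau^\ast\alpha)=\phi\,\tau^\ast\bigl(\Lie_{X_\beta}\alpha-\iota_{V_\beta}\alpha\bigr),
\]
which, by Remark~\ref{remark:Expression_for_Jacobi_bracket} applied to $\{\beta,\alpha\}$ combined with the graded skew-symmetry from Corollary~\ref{cor:Jacobi_identity}, equals $(-1)^q\phi\,\tau^\ast\{\alpha,\beta\}$.

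Finally, using the wedge convention $\iota_{X\wedge Y}=\iota_Y\iota_X$ and the identity $\iota_{\widetilde X_\alpha}\Omega=-\d\Psi(\alpha)$ from above, one rewrites $\{\Psi(\alpha),\Psi(\beta)\}_P=(-1)^{p+q-1}\iota_{\widetilde X_\beta}\d(\phi\,\tau^\ast\alpha)$; a last application of Cartan's formula together with $\iota_{\widetilde X_\beta}(\phi\,\tau^\ast\alpha)=\phi\,\tau^\ast(\alpha\vee\beta)$ (obtained by the same horizontal-vertical splitting) and the identity just derived gives the claimed formula after matching with $\Psi(\{\alpha,\beta\})=(-1)^{p+q}\phi\,\tau^\ast\{\alpha,\beta\}$ and $\d\Psi(\alpha\vee\beta)=(-1)^{p+q+1}\d(\phi\,\tau^\ast(\alpha\vee\beta))$. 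The main obstacle is not conceptual but the careful bookkeeping of the signs coming from the multivector-field Lie derivative, the wedge-contraction convention, and the $(-1)^{p+1}$ prefactor in $\Psi$; the identity $(\Lie_{X_\beta}-\iota_{V_\beta})\alpha=(-1)^q\{\alpha,\beta\}$ extracted from skew-symmetry is what makes the two sides coincide.
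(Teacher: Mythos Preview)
Your proposal is correct and complete, but it follows a different path from the paper's proof. Both arguments agree on well-definedness (lift via Theorem~\ref{thm:multivector_field_lift} and use $\Lie_{\widetilde X_\alpha}\Upsilon=0$). For the bracket identity, however, the paper proceeds algebraically: it writes $\{\Psi(\alpha),\Psi(\beta)\}_P=(-1)^q\iota_{\widetilde X_\alpha\wedge\widetilde X_\beta}\d\Upsilon$, applies the product rule $\Lie_{\widetilde X_\alpha\wedge\widetilde X_\beta}\Upsilon=(-1)^q\iota_{\widetilde X_\beta}\Lie_{\widetilde X_\alpha}\Upsilon+\Lie_{\widetilde X_\beta}\iota_{\widetilde X_\alpha}\Upsilon$, and then the commutator identity $\Lie_{\widetilde X_\beta}\iota_{\widetilde X_\alpha}\Upsilon=(-1)^{(q-1)p}\bigl(\iota_{[\widetilde X_\beta,\widetilde X_\alpha]}\Upsilon+\iota_{\widetilde X_\alpha}\Lie_{\widetilde X_\beta}\Upsilon\bigr)$; the two $\Lie\Upsilon=0$ conditions kill two terms, and the remaining $\iota_{[\widetilde X_\beta,\widetilde X_\alpha]}\Upsilon=\phi\,\tau^\ast\iota_{[X_\beta,X_\alpha]}\Theta$ is read off directly from $\tau_\ast[\widetilde X_\beta,\widetilde X_\alpha]=[X_\beta,X_\alpha]$ and the definition of the Jacobi bracket. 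Your route instead replays the horizontal--vertical computation of Theorem~\ref{thm:multivector_field_lift} on $\phi\,\tau^\ast\alpha$ (rather than on $\phi\,\tau^\ast\Theta$) to obtain $\Lie_{\widetilde X_\beta}(\phi\,\tau^\ast\alpha)=\phi\,\tau^\ast(\Lie_{X_\beta}-\iota_{V_\beta})\alpha$, and then invokes Remark~\ref{remark:Expression_for_Jacobi_bracket} together with graded skew-symmetry. The paper's argument is more structural---it never needs the explicit shape $\widetilde X_\beta=X_\beta^h+\Delta\wedge Y_\beta^h$ of the lift, only its defining properties---whereas yours is more hands-on but has the virtue of recycling verbatim the computation already carried out in the proof of Theorem~\ref{thm:multivector_field_lift} and of making the appearance of $(\Lie_{X_\beta}-\iota_{V_\beta})\alpha$ transparent.
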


\begin{proof}
     Let $\alpha \in \Omega^{n-p}_H(M, \Theta)$ and let $X_\alpha \in \X^p(M), V_\alpha \in X^{p-1}(M)$ be multivector fields such that
     \[\alpha = -\iota_{X_\alpha} \Theta\qquad\text{and}\qquad \Lie_{X_\alpha}\Theta = \iota_{V_\alpha} \Theta\,.\]
     Take $\widetilde X \in \X^p(M)$ to be the multivector field from Theorem \ref{thm:multivector_field_lift}. Then, we have
     \begin{align*}
         \iota_{\widetilde X} (- \d \Upsilon)& = - \iota_{\widetilde X} \d \Upsilon = - (-1)^p \d \iota_{\widetilde X} \Upsilon + (-1)^p \Lie_{\widetilde X} \Upsilon\\
         &= - (-1)^p \d \iota_{\widetilde X} (\phi \cdot \tau^\ast \Theta) = (-1)^{p+1}\d(\phi\cdot \tau^\ast \alpha)\,,
     \end{align*}
     which proves that $\phi \cdot \d \alpha$ is Hamiltonian and hence $\psi$ is well defined.

     Consider $\alpha \in \Omega^{n-p}_H(M, \Theta)$ and $\beta \in \Omega^{n-q}_H(M, \Theta)$. Let $X_\alpha, V_\alpha, X_\beta, V_\beta$ be multivector fields such that
     \begin{equation*}
        \alpha = - \iota_{X_\alpha} \Theta\,, \qquad \Lie_{X_\alpha} \Theta = \iota_{V_\alpha} \Theta\,,\qquad
        \beta = - \iota_{X_\beta} \Theta\,, \qquad \Lie_{X_\beta} \Theta = \iota_{V_\beta} \Theta\,.
     \end{equation*}
     Defining $H$ to be the Ehresmann connection from Lemma \ref{lemma:connection}, following Theorem \ref{thm:multivector_field_lift}, we can take as lift of $X_\alpha$ and $X_\beta$
     \[
     \widetilde X_\alpha = X_\alpha^h + (-1)^p \Delta \wedge V_\alpha\qquad\text{and}\qquad \widetilde X_\beta = X_\beta^h + (-1)^q \Delta \wedge V_\beta\,,
     \]
     respectively. As we know from the previous calculation, these lifts are the vector fields corresponding to the (pre-)multisymplectic Hamiltonian forms $\Psi(\alpha) = (-1)^{p+1} \phi\, \tau^\ast \alpha$ and $\Psi(\beta) =(-1)^{q+1} \phi\, \tau^\ast \beta$. Therefore, using that $\Lie_{\widetilde X_\alpha \wedge \widetilde X_\beta} \Upsilon = (-1)^q\iota_{\widetilde X_\beta} \Lie_{\widetilde X_\alpha} \Upsilon + \Lie_{\widetilde X_\beta} \iota_{\widetilde X_\alpha} \Upsilon$ (see \cite{For_03}), we have
     \begin{align*}
         \{\Psi(\alpha), \Psi(\beta)\}_P &= (-1)^{q-1} \iota_{X_{\psi(\alpha)} \wedge X_{\psi(\beta)}} \Omega = (-1)^q \iota_{\widetilde X_\alpha \wedge \widetilde X_\beta} \d \Upsilon\\
         &= (-1)^p\left( -\Lie_{\widetilde X_\alpha \wedge \widetilde X_\beta} \Upsilon + \d \iota_{\widetilde X_\alpha \wedge \widetilde X_\beta} \Upsilon\right) \\
         &= (-1)^p \left( - (-1)^q\iota_{\widetilde X_\beta} \Lie_{\widetilde X_\alpha} \Upsilon - \Lie_{\widetilde X_\beta} \iota_{\widetilde X_\alpha} \Upsilon+ \d (\phi\, \tau^\ast \iota_{X_\alpha \wedge X_\beta} \Theta)\right)\\
         &=(-1)^p\left( - \Lie_{\widetilde X_\beta} \iota_{\widetilde X_\alpha} \Upsilon - \d \left(\phi \, \tau^\ast (\alpha \vee \beta)\right)\right),
        \end{align*}
    where in the last equality we have used that $\Lie_{\widetilde X_{\alpha}} \Upsilon =0$, being the definition of the lift from Theorem \ref{thm:multivector_field_lift}. Now, using that
    \begin{align*}
         \Lie_{\widetilde X_\beta} \iota_{\widetilde X_\alpha} \Upsilon &= (-1)^{(q-1)p} \iota_{[\widetilde X_{\beta}, \widetilde X_{\alpha}]} \Upsilon + (-1)^{(q-1)p} \iota_{\widetilde X_\alpha} \Lie_{\widetilde X_{\beta}} \Upsilon\\
         &= (-1)^{(q-1)p} \iota_{[\widetilde X_{\beta}, \widetilde X_{\alpha}]} \Upsilon = (-1)^{(q-1)p} \phi \, \tau^\ast \iota_{[X_\beta, X_\alpha]} \Theta\\
         &= - (-1)^{(q-1)p} \phi \, \tau^\ast \{\beta, \alpha\}\,,
    \end{align*}
    we get
        \begin{align*}
         \{\Psi(\alpha), \Psi(\beta)\}_P
         &= (-1)^p\left( (-1)^{(q-1)p}\phi\, \tau^\ast \{\beta, \alpha\} - \d \left(\phi \, \tau^\ast (\alpha \vee \beta)\right)\right)\\
         &= (-1)^{p+q}\phi \, \tau^\ast\{\alpha, \beta\}+ (-1)^{p+1}\d \left(\phi \, \tau^\ast (\alpha \vee \beta)\right)\\
         &= \Psi(\{\alpha, \beta\})+ (-1)^q \d \Psi(\alpha \vee \beta),
     \end{align*}
     which finishes the proof.
\end{proof}

\begin{remark} In the case where $\Theta = \eta$ is a contact form, Theorem \ref{thm:multisymplectization_and_bracket} translates into a map
$$
\begin{array}{rccc}
\Psi\colon & \Cinfty(M) & \longrightarrow & \Cinfty(\widetilde M) \\
& f & \longmapsto & \Psi(f) = \phi\cdot f\,,
\end{array}
$$
that satisfies \[\{\Psi(f), \Psi(g)\}_P = \Psi(\{f, g\})\,,\]
since $f \vee g = - \iota_{X_f \wedge X_g} \eta = 0$, so that we recover the usual case of symplectizations of contact manifolds.
\end{remark}

More generally, 
\begin{proposition} Let {$\Theta$ be a nowhere vanishing $n$-form on $M$ and} consider $\alpha \in \Omega^a_H(M, \Theta)$ and $\beta \in \Omega^b_H(M, \Theta)$ with $a+ b < n$. Then, the map $\Psi$ defined in Theorem \ref{thm:multisymplectization_and_bracket} satisfies 
\[\{\Psi(\alpha), \Psi(\beta)\}_P = \Psi(\{\alpha, \beta\}).\]
\end{proposition}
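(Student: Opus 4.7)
The plan is to observe that the statement is an immediate degree-counting corollary of Theorem \ref{thm:multisymplectization_and_bracket}. That theorem already provides the identity
\[
\{\Psi(\alpha), \Psi(\beta)\}_P = \Psi(\{\alpha, \beta\}) + (-1)^q \d \Psi(\alpha \vee \beta),
\]
so it suffices to show that under the hypothesis $a+b<n$ the cup-product term $\alpha \vee \beta$ vanishes identically, which kills the additional exact piece.

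To carry this out, I would first translate the degree hypothesis into the language of the defining multivector fields. If $\alpha \in \Omega^a_H(M,\Theta)$ is realized by an infinitesimal conformal transformation $X_\alpha \in \X^{p}(M)$ with $p = n-a$, and similarly $\beta$ is realized by $X_\beta \in \X^{q}(M)$ with $q = n-b$, then the hypothesis $a+b<n$ is equivalent to $p+q > n$. The cup product is
\[
\alpha \vee \beta = -\iota_{X_\alpha \wedge X_\beta}\Theta,
\]
and $X_\alpha \wedge X_\beta \in \X^{p+q}(M)$ while $\Theta \in \Omega^n(M)$. Since $p+q>n$, the contraction $\iota_{X_\alpha \wedge X_\beta}\Theta$ would land in $\Omega^{n-(p+q)}(M)$, a space of negative-degree forms, hence is zero. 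Therefore $\alpha \vee \beta = 0$, in agreement with the codomain of the cup-product map $\vee\colon \Omega_H^a(M)\otimes \Omega_H^b(M)\to \Omega_H^{a+b-n}(M)$ being trivial when $a+b<n$.

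Plugging $\alpha\vee\beta=0$ into the identity from Theorem \ref{thm:multisymplectization_and_bracket} gives $\d\Psi(\alpha\vee\beta) = \d\Psi(0) = 0$ and the claimed equality follows. There is essentially no obstacle: the whole content is degree bookkeeping once Theorem \ref{thm:multisymplectization_and_bracket} is in hand. The only point worth a brief check is that $\Psi$ is still well-defined on these lower-degree Hamiltonian forms, but this is covered by the first half of the proof of Theorem \ref{thm:multisymplectization_and_bracket}, which never used the specific value of $p$.
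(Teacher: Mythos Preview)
Your proposal is correct and matches the paper's own proof essentially verbatim: both observe that $\deg(X_\alpha\wedge X_\beta)=(n-a)+(n-b)>n$ forces $\alpha\vee\beta=0$, and then invoke the formula of Theorem~\ref{thm:multisymplectization_and_bracket}.
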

\begin{proof} Notice that we have $\alpha \vee \beta = - \iota_{X_\alpha \wedge X_\beta} \Theta$, and $\deg (X_\alpha \wedge X_\beta) = (n - a) + (n - b) = 2n - (a+b) > n$, which implies $\alpha \vee \beta = 0$, and finishes the proof using the formula in Theorem \ref{thm:multisymplectization_and_bracket}. 
\end{proof}

In the case of the canonical (pre-)multisymplectization, the previous results translate into the following.

\begin{corollary} For every conformal transformation of {a nowhere vanishing form} $\Theta$, $X \in \mathfrak{X}^p(M)$, there exists a unique-up-to-$\ker_p  \d (z \, \Theta)$ multivector field $\widetilde X \in \mathfrak{X}^p(M \times \mathbb{R}_\times)$ such that \[\tau_\ast \widetilde X = X\qquad\text{and}\qquad \Lie_{\widetilde X} (z \,\Theta ) = 0\,.\] 
\end{corollary}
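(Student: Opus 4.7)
The plan is direct: this corollary is the specialization of Theorem \ref{thm:multivector_field_lift} to the canonical (pre-)multisymplectization introduced in Definition \ref{def:canonical_premultisymplectization}. First I would verify that the canonical construction $\tau\colon M\times\mathbb{R}_\times\to M$, equipped with $\Upsilon = z\,\Theta$, Liouville vector field $\Delta = z\,\partial/\partial z$, and conformal factor $\phi = z$, meets all the hypotheses of that theorem: $\Delta$ is vertical for $\tau$, the conformal factor $z$ is nowhere vanishing on $M\times\mathbb{R}_\times$, and $\Theta$ is nowhere vanishing on $M$ by the hypothesis of the corollary. Consequently the bundle $\tau\colon M\times \mathbb{R}_\times\to M$ is indeed a (pre-)multisymplectization of $(M,\Theta)$ in the sense of Definition \ref{def:pre_multisymplectization}, so Theorem \ref{thm:multivector_field_lift} applies verbatim.

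Given any infinitesimal conformal transformation $X\in\mathfrak{X}^p(M)$ of $\Theta$, Theorem \ref{thm:multivector_field_lift} then supplies a multivector field $\widetilde X \in \mathfrak{X}^p(M\times \mathbb{R}_\times)$, unique modulo $\ker_p \d\Upsilon = \ker_p \d(z\,\Theta)$, such that $\tau_\ast \widetilde X = X$ and $\Lie_{\widetilde X} \Upsilon = 0$. Since $\Upsilon = z\,\Theta$, this last identity is exactly $\Lie_{\widetilde X}(z\,\Theta) = 0$, which is the desired conclusion.

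There is essentially no obstacle here; the corollary is a straight-through specialization, and all the work, including the existence of a suitable Ehresmann connection and the explicit construction of the lift in the form $\widetilde X = X^h + (-1)^p \Delta \wedge V^h$ with $V$ a multivector field witnessing that $X$ is an infinitesimal conformal transformation, has already been carried out in the proofs of Lemma \ref{lemma:connection} and Theorem \ref{thm:multivector_field_lift}. If one wished, the concrete formula $\widetilde X = X^h + (-1)^p \Delta \wedge V^h$ could be recorded for later use in computations on $M\times\mathbb{R}_\times$, but the statement only requires existence and the uniqueness clause, both of which are immediate from the invoked theorem.
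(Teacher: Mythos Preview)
Your proposal is correct and matches the paper's approach exactly: the corollary is stated in the paper without proof, introduced simply as the specialization of Theorem \ref{thm:multivector_field_lift} to the canonical (pre-)multisymplectization from Definition \ref{def:canonical_premultisymplectization}. Your verification that the canonical data $(\widetilde M,\Upsilon,\Delta,\phi)=(M\times\mathbb{R}_\times,\,z\,\Theta,\,z\,\partial/\partial z,\,z)$ satisfies the hypotheses of that theorem is precisely the intended argument.
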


\begin{corollary} For {a nowhere vanishing form,} $\Theta$, the map
$$
\begin{array}{rccc}
    \Psi\colon & \Omega^{n - p}_H(M) & \longrightarrow & \Omega^{n - p}_H(M \times \mathbb{R}_\times) \\
     & \alpha & \longmapsto & \Psi(\alpha) = (-1)^{p+1} z \,\tau^\ast \alpha
\end{array}
$$
satisfies
\[\{\Psi(\alpha),  \Psi(\beta)\}_P = \Psi({\{\alpha, \beta\}}) +(-1)^q \d \Psi(\alpha \vee \beta)\,,\]
for every $\alpha \in \Omega^{n-p}_H(M, \Theta)$ and $\beta \in \Omega^{n-q}_H(M, \Theta)$.
\end{corollary}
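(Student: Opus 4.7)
The plan is to derive this corollary as a direct specialization of Theorem \ref{thm:multisymplectization_and_bracket} to the canonical (pre-)multisymplectization described in Definition \ref{def:canonical_premultisymplectization}. First I would check that the canonical (pre-)multisymplectization $\tau\colon M \times \mathbb{R}_\times \to M$, $\Omega = -\d(z\,\Theta)$, is indeed a (pre-)multisymplectization of $(M,\Theta)$ in the sense of Definition \ref{def:pre_multisymplectization}: the projection $\tau$ is a trivial fiber bundle, the Liouville vector field $\Delta = z\,\partial/\partial z$ is $\tau$-vertical and satisfies $\iota_\Delta \Omega = -z\,\tau^\ast\Theta = -\Upsilon$, and the potential $\Upsilon = z\,\tau^\ast \Theta$ is precisely of the required form $\phi\cdot \tau^\ast \Theta$ with conformal factor $\phi = z$.

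Having identified the ingredients, I would simply substitute $\phi = z$ into the general lift $\Psi(\alpha) = (-1)^{p+1}\phi\,\tau^\ast\alpha$ from Theorem \ref{thm:multisymplectization_and_bracket} to obtain the stated formula $\Psi(\alpha) = (-1)^{p+1} z\,\tau^\ast\alpha$, and the bracket identity $\{\Psi(\alpha),\Psi(\beta)\}_P = \Psi(\{\alpha,\beta\}) + (-1)^q \d \Psi(\alpha\vee\beta)$ follows immediately from that theorem.

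The only nontrivial verification is that the hypothesis of Theorem \ref{thm:multisymplectization_and_bracket}, namely that $\Theta$ is nowhere vanishing on $M$, is inherited by the current statement (which also assumes this), and that the conformal factor $z$ is nowhere vanishing on $M \times \mathbb{R}_\times$, as required by Definition \ref{def:pre_multisymplectization}; both hold by construction since $z \in \mathbb{R}_\times = \mathbb{R}\setminus\{0\}$. There is no substantial obstacle here: the result is a direct corollary, and the only book-keeping issue is making sure the signs and degree shift in $\Psi$ match those in the general theorem, which they do verbatim.
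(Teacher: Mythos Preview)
Your proposal is correct and matches the paper's approach: the corollary is stated without proof immediately after Theorem \ref{thm:multisymplectization_and_bracket}, and is obtained precisely by specializing that theorem to the canonical (pre-)multisymplectization of Definition \ref{def:canonical_premultisymplectization} with $\phi = z$ and $\Delta = z\,\partial/\partial z$. Your verification of the hypotheses is exactly the intended justification.
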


Theorem \ref{thm:multisymplectization_and_bracket} may be expanded upon to give a formula relating {\it higher Jacobi identities}. Before proving this, let us first give a couple of formulas that we will find useful:

\begin{lemma}
\label{lemma:conformal_factor_of_wedge} Let $\Theta$ be a nowhere vanishing $n$-form on a smooth manifold $M$. Let $X_1, \dotsc, X_m$ be infinitesimal conformal transformations of $\Theta$ with conformal factor $g_1, \dotsc, g_m \in \Cinfty(M)$, respectively. Then, we may take as conformal factor of $X_1\wedge \dotsb \wedge X_m$ the following multivector field:
\begin{align*}
    \sum_{1 \leq i < j \leq m} (-1)^{m + i  + j} [X_i, X_j] \wedge X_1  \wedge \cdots \wedge\widehat{X_i}\wedge \cdots \wedge\widehat{X_j}\wedge \cdots \wedge X_m\\
    + \sum_{1 \leq i \leq m} (-1)^{m +i} g_i X_1 \wedge \cdots \wedge \widehat{X}_i \wedge \cdots \wedge X_m\,.
\end{align*}
\end{lemma}

\begin{proof} As we know, the wedge product of infinitesimal conformal transformations is conformal again, by Eq. \eqref{eq:conformal_factor_wedge}. Now the formula follows from an inductive reasoning. Indeed, the case $m = 2$ is clear from Eq. \eqref{eq:conformal_factor_wedge}. Now, for the inductive step, let us assume that the conformal factor of $X_1 \wedge\cdots \wedge X_{m-1}$ is 
\begin{align*}
     U = &\sum_{1 \leq i < j \leq m-1} (-1)^{m -1 + i  + j} [X_i, X_j] \wedge X_1  \wedge \cdots \wedge\widehat{X_i}\wedge \cdots \wedge\widehat{X_j}\wedge \cdots \wedge X_{m-1}\\
    &+ \sum_{1 \leq i \leq m-1 } (-1)^{m-1+i} g_i X_1 \wedge \cdots \wedge \widehat{X}_i \wedge \cdots \wedge X_{m-1}\,.
\end{align*}
Then, by applying again Eq. \eqref{eq:conformal_factor_wedge}, we have that, denoting the conformal factor of $X_1 \wedge \cdots \wedge X_m$ by $V$: 
\begin{align*}
    V &= [X_m, X_1 \wedge \cdots \wedge X_{m-1}] + g_m X_1 \wedge \cdots \wedge X_{m-1} - U \wedge X_m \\
    &= [X_m, X_1 \wedge \cdots \wedge X_{m-1}] + g_m X_1 \wedge \cdots \wedge X_{m-1}\\
    &\quad- \sum_{1 \leq i < j \leq m-1} (-1)^{m -1 + i  + j} [X_i, X_j] \wedge X_1  \wedge \cdots \wedge\widehat{X_i}\wedge \cdots \wedge\widehat{X_j}\wedge \cdots \wedge X_{m}\\
    &\quad- \sum_{1 \leq i \leq m-1 } (-1)^{m-1+i} g_i X_1 \wedge \cdots \wedge \widehat{X}_i \wedge \cdots \wedge X_{m}\\
    &= [X_m, X_1 \wedge \cdots \wedge X_{m-1}] + g_m X_1 \wedge \cdots \wedge X_{m-1}\\
    &\quad +\sum_{1 \leq i < j \leq m-1} (-1)^{m + i  + j} [X_i, X_j] \wedge X_1  \wedge \cdots \wedge\widehat{X_i}\wedge \cdots \wedge\widehat{X_j}\wedge \cdots \wedge X_{m}\\
    &\quad+\sum_{1 \leq i \leq m-1 } (-1)^{m+i} g_i X_1 \wedge \cdots \wedge \widehat{X}_i \wedge \cdots \wedge X_{m}\,,
\end{align*}
which is clearly equal to the desired expression.
\end{proof}

\begin{lemma}
\label{lemma:Lift_of_wedge}
Let $X_1, \dotsc, X_m$ be infinitesimal conformal transformations of $(M, \Theta)$ and let $\tau \colon \widetilde M \rightarrow M$, with $(\widetilde M, -\d \Upsilon)$ a (pre-)multisymplectic manifold be a multisymplectization with conformal factor $\phi$. Let $H$ be the Ehresmann connection of Lemma \ref{lemma:connection} and denote by $X^h$ its horizontal lift. Denote by $\widetilde X_i$ a lift of each vector field given by Theorem \ref{thm:multivector_field_lift} and denote by $U$ a lift of $X_1 \wedge \cdots \wedge X_m$. Then, we have 
\begin{align*}
    \widetilde X_1 \wedge \cdots \wedge \widetilde X_m &= U + \Delta \wedge \sum_{1 \leq i < j \leq m} (-1)^{i+j - 1}\left([X_i, X_j]\wedge X_1  \wedge \cdots \wedge\widehat{X_i}\wedge \cdots \wedge\widehat{X_j}\wedge \cdots \wedge X_{m} \right)^h\\
     &\operatorname{mod}\ker_p \d \Upsilon\,.
\end{align*}
\end{lemma}
\begin{proof} Denote by $g_i$ the conformal factor of each $X_i$. In light of the proof of Theorem \ref{thm:multivector_field_lift}, we have
\[
\widetilde X_i := X_i^h - g_i \Delta \qquad \operatorname{mod} \ker_1 \d \Upsilon\,.
\]
Then,
\begin{align*}
    \widetilde X_1 \wedge \cdots \wedge \widetilde X_m &= (X_1 \wedge\cdots \wedge X_m)^h + \Delta \wedge \sum_{1 \leq i \leq m} (-1)^{i} g_i \left( X_1 \wedge \cdots \wedge \widehat{X_i} \wedge \cdots \wedge X_m\right)^h\\
    & \operatorname{ mod} \ker_p \d \Upsilon\,.
\end{align*}
Let $V$ denote the conformal factor of $X_1 \wedge \cdots \wedge X_m$ given by Lemma \ref{lemma:conformal_factor_of_wedge}. Then, again using the same formula obtained in the proof of Theorem \ref{thm:multivector_field_lift}, we have 
\[
U = (X_1 \wedge \cdots \wedge X_m)^h + (-1)^m \Delta \wedge V^h \, \operatorname{mod} \ker_p \d \Upsilon\,,
\]
where 
\begin{align*}
    V =   \sum_{1 \leq i < j \leq m} (-1)^{m + i  + j} [X_i, X_j] \wedge X_1  \wedge \cdots \wedge\widehat{X_i}\wedge \cdots \wedge\widehat{X_j}\wedge \cdots \wedge X_m\\
    + \sum_{1 \leq i \leq m} (-1)^{m +i} g_i X_1 \wedge \cdots \wedge \widehat{X}_i \wedge \cdots \wedge X_m\,,
\end{align*}
from which the result follows.
\end{proof}

We are now ready to prove the refinement of Theorem \ref{thm:multisymplectization_and_bracket}. 

\begin{theorem}
\label{thm:L_infinity}
Let $\Theta $ be a nowhere vanishing $n$-form on $M$ and let $\tau \colon \widetilde M \rightarrow M$ be a (pre-)multisymplectization, with $(\widetilde M, - \d \Upsilon)$ a (pre-)multisymplectic manifold, with nowhere vanishing conformal factor $\phi \in \Cinfty(\widetilde M)$. Denote by $\Psi_1 = \Psi$ the map from Theorem \ref{thm:multisymplectization_and_bracket}. For $m \geq 2$, define
\[
\Psi_m  \colon \Omega_H^{n-1}(M, \Theta)^{\otimes m} \longrightarrow \Omega^{n+1-m}(\widetilde M)
\]
as $\Psi_m(\alpha_1, \dotsc, \alpha_m) :=  \phi \cdot  \tau^\ast \left( \alpha_1 \vee \cdots \vee \alpha _m\right)$ and \[
\ell_m \colon \Omega^{n-1}_H(\widetilde M, - \d \Upsilon)^{\otimes m} \longrightarrow \Omega^{n+1-m}(\widetilde M)
\]
by $\ell_m(\beta_1, \dotsc,\beta_m) := -\iota_{X_{\beta_1} \wedge \dotsb \wedge X_{\beta_m}} \d \Upsilon$. Then, for every $\alpha_1, \dotsc, \alpha_m \in \Omega^{n-1}_H(M, \Theta)$, we have
\begin{align*}
    \ell_m (\Psi(\alpha_1), \dotsc, \Psi(\alpha_m)) &= (-1)^{m+1}\d \Psi_{m} ( \alpha_1, \dotsc, \alpha_m) \\
    &\quad +\sum_{1 \leq i < j \leq m} (-1)^{i+j +1} \Psi_{m-1}(\{\alpha_i, \alpha_j\}, \alpha_1, \dotsc, \widehat \alpha_i, \dotsc, \widehat \alpha_j, \dotsc, \alpha_m) 
\end{align*}
\end{theorem}
\begin{proof} Indeed, by definition, denoting by $X_i$ the infinitesimal conformal transformation associated to $\alpha_i$, and by $\widetilde X_i$ their respective lifts, we have
\begin{align*}
    \ell_m(\Psi(\alpha_1), \dotsc, \Psi(\alpha_m)) = - \iota_{\widetilde X_1 \wedge \cdots \wedge \widetilde X_m} \d \Upsilon\,,
\end{align*}
which by Lemma \ref{lemma:Lift_of_wedge}, we may write as 
\begin{align*}
    \ell_m(\Psi(\alpha_1), \dotsc, \Psi(\alpha_m)) &= - \iota_U \d \Upsilon\\
    &\quad  + \sum_{1 \leq i < j \leq m}(-1)^{i+j}  \iota_{\left(\Delta \wedge \left([X_i, X_j]\wedge X_1  \wedge \cdots \wedge\widehat{X_i}\wedge \cdots \wedge\widehat{X_j}\wedge \cdots \wedge X_{m} \right)^h\right)} \d \Upsilon\,,
\end{align*}
where $U$ denotes a lift of $X_1 \wedge \cdots \wedge X_m$. Since $\Lie_U \Upsilon = 0$, we have 
\begin{align*}
    \ell_m(\Psi(\alpha_1), \dotsc, \Psi(\alpha_m)) &= (-1)^m \d \iota_U \Upsilon\\
    &\quad + \sum_{1 \leq i < j \leq m}(-1)^{i+j} \phi \cdot \tau^\ast \iota_{[X_i \wedge X_j] \wedge \cdots\widehat{X}_i \wedge \cdots \wedge \widehat{X}_j \wedge \cdots X_m} \Theta \\
    &= (-1)^m \d \left( \phi  \cdot  \tau^\ast \iota_{X_1 \wedge \cdots \wedge X_m} \Theta \right)\\
    &\quad + \sum_{1 \leq i < j \leq m}(-1)^{i+j} \phi \cdot \tau^\ast \iota_{[X_i \wedge X_j] \wedge \cdots\widehat{X}_i \wedge \cdots \wedge \widehat{X}_j \wedge \cdots X_m} \Theta\\
    &= (-1)^{m+1} \d \left(\phi \cdot \tau^\ast (\alpha_1 \vee \cdots \vee \alpha_m) \right)\\
    &\quad + \sum_{1 \leq i < j \leq m} (-1)^{i+j + 1} \phi \cdot \tau^\ast \left(\{\alpha_i, \alpha_j\} \vee \alpha_1 \cdots \vee \widehat{\alpha}_i \vee \cdots \vee \widehat{\alpha}_j \vee \cdots \vee \alpha_m\right)\,,
\end{align*}
from which the result follows.
\end{proof}

\begin{remark} Notice that the previous result implies that we may find an $L_\infty$-algebra morphism between the Lie algebra $\Omega^{n-1}_H(M, \Theta)$ and the $L_\infty$-algebra $\Omega^{n-1}_H(M, - \d \Upsilon)$:
\[
\widetilde \Psi_m \colon \Omega^{n-1}_H(M, \Theta)^{\otimes m} \longrightarrow \Omega^{n-1}_H(\widetilde M, - \d \Upsilon)\,,
\] in the sense presented in \cite[Proposition 3.8]{CFRZ_16}. Indeed, we may change the sign convention of $\Psi_m$ and $\ell_m$ inductively so that the relation in Theorem \ref{thm:L_infinity} reads as
\begin{align*}
    \widetilde \ell_m (\Psi(\alpha_1), \dotsc, \Psi(\alpha_m)) &= -\d \widetilde \Psi_{m} ( \alpha_1, \dotsc, \alpha_m) \\
    &\quad +\sum_{1 \leq i < j \leq m} (-1)^{i+j +1}\widetilde  \Psi_{m-1}(\{\alpha_i, \alpha_j\}, \alpha_1, \dotsc, \widehat \alpha_i, \dotsc, \widehat \alpha_j, \dotsc, \alpha_m)\,,
\end{align*}
for some $\widetilde \ell_m = \pm \ell_m$ and $\widetilde \Psi_m = \pm \Psi_m$.
\end{remark}

\begin{remark}  In \cite{Vit_15}, Vitagliano introduces higher codimensional versions of contact manifolds that he calls ‘multicontact manifolds’. He introduces an $L_\infty$-algebra associated to a particular distribution of codimension $n$ on a manifold $M$, say $H$. When the line bundle $C \rightarrow M$ defined by
\[
C = \bigg\{\alpha \in \bigwedge^n M\colon H = \{\xi \in \T M \colon \iota_\xi \alpha = 0\}\bigg\} \subseteq \bigwedge^n M
\]
is trivial (to see details we refer to the cited article), the distribution $H$ may be thought as $H = \ker_1 \Theta$, for a nowhere vanishing form $\Theta \in \Omega^{n}(M)$, namely some trivializing section of $C$. Then, Theorem \ref{thm:L_infinity} gives an $L_\infty$-algebra isomorphism between the algebra introduced by Vitagliano and the Lie algebra of conformal Hamiltonian $(n-1)$-forms. Indeed, a symmetry of the distribution $H$ is nothing but a conformal symmetry of $\Theta$.
\end{remark}
%This implies that we have an embedding of categories:
%\[
%\operatorname{Multicontact} \hookrightarrow \operatorname{Homogeneous\, multisymplectic}
%\]

\subsection{Multisymplectizations of multicontact manifolds}

Let us first define what a multicontact manifold is. Proposition \ref{prop:multisymplectization} motivates the following:

\begin{definition} 
\label{def:multicontact_manifold}
A \textbf{multicontact manifold} is a pair $(M, \Theta)$ such that 
\[\ker_1 \Theta \cap \ker_1 \d \Theta = \{0\}\qquad\text{and}\qquad \ker_1\d \Theta \neq \{0\}\,.\]
\end{definition}

\begin{remark}

\label{remark:Co-orientable_multicontact}The condition $\ker_1 \d \Theta \neq \{0\}$ ensures that the canonical pre-multisymplectization of a multicontact manifold $(M, \Theta)$ is non-degenerate and, furthermore, it is a property that will be heavily used in the sequel.
\end{remark} 

\begin{remark} Definition \ref{def:multicontact_manifold} recovers the notion of contact form when dealing with $1$-forms. Indeed, let $\eta \in \Omega^1(M)$ such that
\[\ker_1 \eta \cap \ker_1 \d \eta = \{0\} \qquad \text{and} \qquad \ker_1 \d \eta \neq \{0\}\,.\]
First notice that $M$ is necessarily odd dimensional. Indeed, $\dim \ker_1\eta = \dim M - 1$ so that $\dim \ker_1 \d \eta = 1$. Since $\operatorname{codim} \ker_1 \d \eta$ is necessarily even dimensional, $\dim M$ must be odd. Now notice that the map
$$
\begin{array}{rccc}
    \flat\colon & \T M & \longrightarrow & \cT M \\
    & v & \longmapsto & \iota_v{\d \eta} +\eta(v) \cdot \eta
\end{array}
$$
defines a vector bundle isomorphism. Indeed, it is enough to show that it defines a monomorphism of vector bundles. Consider $v \in \T M$ such that
\[\iota_v{\d \eta} +\eta(v) \cdot \eta = 0\,.\]
Contracting again by $v$ yields $\eta(v) ^2 = 0$, which implies $\eta(v)$ = 0. Hence, we have $\iota_v \d \eta = 0$, which gives $v \in \ker_1 \eta \cap \ker_1 \d \eta$ and, hence, $v = 0$. Therefore, $\flat$ defines a vector bundle isomorphism and thus, $\eta$ defines a contact form.
%Notice that in this case the condition $\ker_1 \d \eta \neq \{0\}$ is superfluous and comes from $\ker_1 \eta \cap \ker_1 \d \eta = \{0\}$.
\end{remark}

\begin{remark}
%{A multicontact form gives a maximally non integrable distribution as follows. Define $\mathcal{D} := \ker_1 \d \Theta$. If this were a regular distribution, its curvature $\gamma\colon \ker_1 \Theta \otimes \ker_1 \Theta \rightarrow \T M / \ker_1 \Theta$ may be identified with the following vector bundle map: \[\gamma\colon \ker_1 \Theta \otimes \ker_1 \Theta \rightarrow \bigwedge^{n-1} M \,, \quad v \otimes w \mapsto \gamma(v\otimes w) := \iota_{[v,w]} \Theta = - \iota_{v \wedge w } \d \Theta\,.\]}
As we mentioned, in \cite{Vit_15}, Vitagliano introduced higher codimensional versions of contact manifolds. This notion is different from ours since, in the cited paper, a multicontact manifold is a manifold equipped with a maximally non-integrable distribution of codimension $n$, which is called a multicontact structure of degree $n$. However, here we are considering differential forms, not distributions. In this sense, our notion generalizes the one introduced in \cite{LGMRR_23, LGMRR_25}. Nevertheless, we may still find a natural connection between the two concepts when the maximally non-integrable distribution arises as the $1$-kernel of a particular $n$-form $\Theta \in \Omega^n (M)$ (see Remark \ref{remark:Co-orientable_multicontact}). Let $\Theta$ be a nowhere vanishing $n$-form on $M$ and define $H := \ker_1 \Theta$. Then if $H$ is maximally non-integrable, we have $\ker_1 \Theta \cap \ker_1 \d \Theta = \{0\}$. The latter condition $\ker_1 \d \Theta \neq \{0\}$ is not necessarily satisfied.
\end{remark}

\begin{remark} A different generalization of contact manifold is that one of $k$-contact manifold (see \cite{ GGMRR_21,  Riv_23}), which (in terms of distributions, see \cite{LRS_24}) is a multicontact manifold of degree $k$ in the sense of Vitagliano $(M, H)$ (see Remark \ref{remark:Co-orientable_multicontact}) that around every point admits $k$ locally defined linearly independent symmetries. When the $k$-contact structure $H$ is both co-orientable and polarizable (see \cite{LRS_24}) we have that $H = \langle \eta_1, \dotsc, \eta_k\rangle^{\circ, 1}$, for certain faimily of $k$ independent $1$-forms $\eta^\mu$. Then, one may find a set of Darboux coordinates around every point on $M$, $(s^\mu, y^j, p^{\mu}_j)$ for $1 \leq \mu \leq n$ and $1 \leq j \leq k$ (notice that this introduces restrictions on the dimension of $M$) such that
\[
\eta^ \mu= \d s^\mu - p^\mu_i \d y^i\,. 
\]
In this scenario, if $k> 1$, we have that the $n$-form $\Theta \in \Omega^n (M \times \mathbb{R}^k)$ defined by
\[
\Theta = \eta^\mu \wedge \d^{n-1} x_\mu = \d s^\mu \wedge \d ^{n-1} x_\mu - p^\mu_i \d y^i \wedge \d^{n-1}x _\mu\,,
\]
where $x^\mu$ denotes the canonical set of coordinates on $\mathbb{R}^k$ and 
\[
\d^{n-1} x_\mu := (-1)^{\mu +1} \d x^1 \wedge \cdots \wedge \widehat{\d x^\mu} \wedge \cdots \wedge \d x^n\,,
\] is a multicontact form, as a quick computation shows.
\end{remark}

The theory of (pre-)multisymplectizations is greatly simplified when dealing with multisymplectizations of multicontact manifolds. First notice that a multicontact $n$-form $\Theta$ is necessarily nowhere vanishing. Indeed, if $\Theta|_p = 0$, then $\ker_1 \Theta |_p = \T _p M$ which would yield $\{0 \}= \ker_1 \Theta |_p \cap \ker_1 \d \Theta |_p = \ker_1 \d \Theta |_p \neq \{0\}$, giving a contradiction.

\begin{proposition}
\label{prop:Liouville_is_vertical} Let $\tau\colon \widetilde M \rightarrow M$ be a fiber bundle with at least $1$-dimensional fibers, where $(\widetilde M, \Omega = -\d (\phi\, \tau^\ast \Theta))$ is a homogeneous multisymplectic manifold for some nowhere vanishing function $\phi \in \Cinfty(\widetilde M)$. Then, its Liouville vector field $\Delta$ is vertical.
\end{proposition}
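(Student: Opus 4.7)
The plan is to unpack the defining equation $\iota_\Delta \Omega = -\Upsilon$ and then extract information about the projected tangent vector $v := \d\tau(\Delta_{\widetilde p})$ at an arbitrary $\widetilde p \in \widetilde M$. Expanding $\Omega = -\d(\phi\,\tau^\ast\Theta)$ and using $\d\Upsilon = \d\phi \wedge \tau^\ast\Theta + \phi\, \tau^\ast \d\Theta$, the condition $\iota_\Delta \d\Upsilon = \Upsilon$ becomes the master identity
\[
(\Delta(\phi) - \phi)\,\tau^\ast\Theta - \d\phi \wedge \iota_\Delta \tau^\ast\Theta + \phi\, \iota_\Delta \tau^\ast \d\Theta = 0.
\]
Crucially, at this stage I do \emph{not} invoke Remark~\ref{remark:Delta_applied_to_phi}: the equality $\Delta(\phi)=\phi$ stated there tacitly uses verticality of $\Delta$, which is precisely what we want to prove.

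Next I would contract this identity once more with $\Delta$ itself. All doubly-contracted terms drop out because $\iota_\Delta\iota_\Delta = 0$ on forms, and a short rearrangement leaves $-\phi\, \iota_\Delta \tau^\ast\Theta = 0$. Since $\phi$ is nowhere zero, $\iota_\Delta \tau^\ast\Theta = 0$; translating pointwise via the surjectivity of $\d\tau$, this reads $\iota_v\Theta = 0$, i.e.\ $v \in (\ker_1\Theta)_p$. Substituting $\iota_\Delta \tau^\ast\Theta = 0$ back into the master identity yields
\[
\phi\, \iota_\Delta \tau^\ast \d\Theta = (\phi - \Delta(\phi))\,\tau^\ast\Theta,
\]
which pointwise says $\iota_v \d\Theta = c\,\Theta$ at $p$, where $c := (\phi - \Delta(\phi))/\phi$ evaluated at $\widetilde p$.

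To finish, I would invoke the standing multicontact hypothesis $\ker_1\d\Theta\neq\{0\}$: pick any nonzero $u\in(\ker_1 \d\Theta)_p$ and contract $\iota_v \d\Theta = c\,\Theta$ with $u$. The left-hand side equals $-\iota_v \iota_u \d\Theta = 0$, while the right-hand side is $c\, \iota_u \Theta$. The condition $\ker_1\Theta \cap \ker_1\d\Theta = \{0\}$ forces $\iota_u\Theta \neq 0$ (otherwise $u$ would lie in this intersection), so $c = 0$. Hence $\iota_v\d\Theta = 0$, so $v \in (\ker_1 \d\Theta)_p$ as well, and the same intersection condition then gives $v = 0$. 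Since $\widetilde p$ was arbitrary, $\d\tau(\Delta) \equiv 0$, i.e.\ $\Delta$ is $\tau$-vertical.

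The main subtlety is the one already flagged: one must resist the temptation to annihilate $\Delta(\phi) - \phi$ at the outset via Remark~\ref{remark:Delta_applied_to_phi}, and instead carry this quantity through the computation until it is finally pinned down using the multicontact kernel condition on $\d\Theta$. The assumption that the fibers of $\tau$ have positive dimension plays only a background role, ensuring the statement is non-vacuous, since a zero-dimensional fiber would force $\Delta = 0$, contradicting $\iota_\Delta\Omega = -\Upsilon$.
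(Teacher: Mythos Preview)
Your proof is correct and takes a genuinely different route from the paper's. Both start from the same master identity, but to kill the $\d\phi \wedge \iota_\Delta\tau^\ast\Theta$ term the paper contracts with a nonzero \emph{vertical} vector $Y$, using the fiber-dimension hypothesis together with non-degeneracy of $\Omega$ to guarantee $\d\phi(Y)\neq 0$; this yields $\iota_\Delta\tau^\ast\Theta=0$. You instead contract with $\Delta$ itself, and the three contributions telescope to $-\phi\,\iota_\Delta\tau^\ast\Theta=0$ without any appeal to an auxiliary vertical vector, to the fiber hypothesis, or to non-degeneracy of $\Omega$. From that point on the two arguments converge: both use $\ker_1\d\Theta\neq\{0\}$ and $\ker_1\Theta\cap\ker_1\d\Theta=\{0\}$ to force $\Delta(\phi)=\phi$ (equivalently $c=0$) and then $\tau_\ast\Delta=0$. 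Your approach is a little more parsimonious, in that the only hypotheses actually consumed by the argument are the multicontact conditions on $\Theta$.

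One small remark: your closing comment that a zero-dimensional fiber ``would force $\Delta=0$'' is circular as stated, since verticality of $\Delta$ is exactly what is being proved. What is true is that your argument, not having used the fiber hypothesis, would in the zero-dimensional case yield $\Delta=0$ and hence $\Theta=0$, contradicting the multicontact conditions; so that case is indeed vacuous, but for a slightly different reason than the one you gave.
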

\begin{proof}
Indeed, denoting $\Upsilon = \phi \cdot \, \tau^\ast \Theta$, the condition $\iota_{\Delta} \Omega = - \Upsilon$ translates into
\begin{equation}
    \d \phi(\Delta) \cdot \tau^\ast \Theta - \d\phi \wedge \iota_{\Delta} \tau^\ast  \Theta  + \phi \cdot \iota_{\Delta} \tau^\ast\d\Theta = \phi \cdot \tau^\ast\Theta\,.
    \label{eq:homoneity_condition}
\end{equation}
Let $p \in \widetilde M$, and let $Y \in \X(\widetilde M)$ be a vertical vector field with $Y|_p \neq 0$. Then, since $\Omega$ is a non-degenerate multisymplectic form, $\iota_{Y} \Omega|_p \neq 0$, that is, $\d \phi(Y)|_p \cdot \tau^\ast \Theta|_p
\neq 0$, and, thus, $\d \phi(Y)|_p \neq 0$. Contract by $Y$ on Eq. \eqref{eq:homoneity_condition} to get $\d \phi(Y) \cdot \iota_{\Delta} \tau^\ast \Theta  = 0$. Since $\d \phi(Y) |_p \neq 0$, we must have $\iota_{\Delta} \tau^\ast \Theta |_p = 0$. Since $p$ is arbitrary, we conclude that $\iota_{\Delta} \tau^\ast \Theta = 0$ everywhere. 

Now, let $Y \in \X(\widetilde M)$ be a projectable vector field such that $\tau_\ast Y \in \ker \d \Theta$ with $\restr{Y}{\tau(p)} \neq 0$. Contracting by it in Eq. \eqref{eq:homoneity_condition} and using that $\iota_\Delta \tau^\ast \Theta = 0$, we get $\d \phi(\Delta) \cdot \tau^\ast  \iota_{\tau_\ast Y} \Theta = \phi \tau^\ast \iota_{\tau_\ast Y} \Theta$ and, since $\tau_\ast Y|_{\tau(p)} \neq 0$ and $\ker_1 \Theta \cap \ker_1 \d \Theta = 0$, we have $\tau_\ast Y|_{\tau(p)} \not \in \ker_1 \Theta$. This implies that $\d \phi(\Delta)|_p = \phi(p)$ and, since $p$ is arbitrary, we conclude that $\phi(\Delta) = \phi$.

Finally, notice that the two previous equalities imply $\iota_\Delta \tau^\ast \d \Theta = 0$ and, together with $\iota_\Delta \tau^\ast \Theta = 0$, we have $\tau_\ast \Delta = 0$, since $\ker_1 \Theta \cap \ker_1 \d \Theta = \{0\}$, concluding the proof.
\end{proof}

\begin{remark} From Remark \ref{remark:Delta_applied_to_phi}, we know that for a multisymplectization $\tau\colon \widetilde M \rightarrow M$ of a multicontact manifold $(M, \Theta)$ with conformal factor $\phi$, we have $\Delta(\phi) = \phi$.
\end{remark}

In fact,
\begin{proposition}
\label{prop:1_dimensional_fibers}
If $\tau\colon \widetilde M \rightarrow M$ is a multisymplectization of the multicontact manifold $(M, \Theta)$, then the fibers are necessarily $1$-dimensional.
\end{proposition}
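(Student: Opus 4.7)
The plan is to show that the vertical tangent space at every point $p \in \widetilde M$ has dimension exactly one by using non-degeneracy of $\Omega$ together with the fact that a multicontact form $\Theta$ is nowhere vanishing.

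First I would establish the upper bound $\dim \ker \d_p \tau \leq 1$. Let $v \in \ker \d_p \tau$ be any nonzero vertical vector and extend it to a vertical vector field $Y \in \mathfrak{X}(\widetilde M)$. Since $Y$ is vertical, $\iota_Y \tau^\ast \Theta = 0$ and $\iota_Y \tau^\ast \d \Theta = 0$, so a short computation using $\Upsilon = \phi \cdot \tau^\ast \Theta$ gives
\[
\iota_Y \Omega = -\iota_Y \d (\phi \cdot \tau^\ast \Theta) = - Y(\phi) \cdot \tau^\ast \Theta.
\]
Because $\Theta$ is nowhere vanishing on $M$ (as already noted before the statement, a multicontact $n$-form cannot vanish at any point), $\tau^\ast \Theta$ is nowhere zero on $\widetilde M$. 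If $v(\phi) = 0$ at $p$, then $\iota_v \Omega|_p = 0$, contradicting the non-degeneracy of $\Omega$. Therefore the linear functional $v \mapsto v(\phi) = \d\phi|_p(v)$ on $\ker \d_p \tau$ has trivial kernel, which forces $\dim \ker \d_p \tau \leq 1$.

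Next I would obtain the lower bound $\dim \ker \d_p \tau \geq 1$. By Proposition \ref{prop:Liouville_is_vertical} the Liouville vector field $\Delta$ is $\tau$-vertical. Moreover, $\iota_\Delta \Omega = -\Upsilon = -\phi \cdot \tau^\ast \Theta$ is nowhere vanishing (since both $\phi$ and $\Theta$ are), so $\Delta|_p \neq 0$ at every $p \in \widetilde M$. Combining both inequalities, $\dim \ker \d_p \tau = 1$ for every $p$, and hence the fibers of $\tau$ are one-dimensional.

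The argument is essentially a single contraction computation; the only point that has to be handled with care is the invocation of the earlier result guaranteeing that $\Delta$ is vertical, which in turn depends on the multicontact condition $\ker_1 \Theta \cap \ker_1 \d \Theta = \{0\}$. No technical obstacle is expected beyond making sure that $\tau^\ast \Theta \neq 0$ everywhere, which is exactly where the nowhere-vanishing property of a multicontact form enters crucially.
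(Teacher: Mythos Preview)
Your proof is correct and follows essentially the same approach as the paper: both use the computation $\iota_Y\Omega = -Y(\phi)\,\tau^\ast\Theta$ for vertical $Y$ together with non-degeneracy of $\Omega$ and the fact that $\Theta$ is nowhere vanishing. The paper phrases it as showing directly that $Y-\frac{Y(\phi)}{\phi}\Delta$ lies in $\ker\Omega$ and hence vanishes, while you split into upper and lower bounds on $\dim\ker\d_p\tau$, but the content is the same. One minor remark: you need not invoke Proposition~\ref{prop:Liouville_is_vertical} for the verticality of $\Delta$, since the existence of a \emph{vertical} Liouville vector field is already part of Definition~\ref{def:pre_multisymplectization}; this also sidesteps the issue that Proposition~\ref{prop:Liouville_is_vertical} itself assumes at least $1$-dimensional fibers.
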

\begin{proof} Let $Y \in \X(\widetilde M)$ be an arbitrary vertical vector field. We will show that $Y = \frac{Y(\phi)}{\phi} \Delta$. Indeed, notice that 
\[\iota_{Y - \frac{Y(\phi)}{\phi} \Delta} \d (\phi\, \tau^\ast \Theta) = Y(\phi)\tau^\ast\Theta - Y(\phi)\tau^\ast \Theta = 0\,.\]
Now, since the form $- \d (\phi \, \tau^\ast \Theta)$ is non-degenerate, we must have $Y = \frac{Y(\phi)}{\phi} \Delta$, finishing the proof.
\end{proof}

The main ingredient used in the proof of Theorem \ref{thm:multivector_field_lift} was the existence of the connection given in Lemma \ref{lemma:connection}. If $\tau \colon \widetilde M \rightarrow M$ is a multisymplectization of a multicontact manifold $(M, \Theta)$ with conformal factor $\phi \in \Cinfty(\widetilde M)$, then, by Proposition \ref{prop:1_dimensional_fibers}, the connection given by Lemma \ref{lemma:connection} is actually $H = \ker \d \phi$. In particular, we get a well defined horizontal lift:
\[\X^p(M) \longrightarrow \X^p(\widetilde M)\,.\]

Furthermore, we have the following characterization:
\begin{theorem}
\label{thm:Classification_multisymplectization}
Let $(M, \Theta)$ be a connected multicontact manifold and let $\tau\colon \widetilde M \to M$ be a connected multisymplectization with conformal factor $\phi \in \Cinfty(\widetilde M)$. %such that the dimension of the fibers are greater than zero. 
Assume that $\ker \d \phi$ defines a complete Ehresmann connection, namely a connection such that the horizontal lift preserves completeness of vector fields. Then, there exist:
\begin{enumerate}[\rm(i)]
    \item a covering space $\pi\colon \Sigma \rightarrow M$;
    \item an open interval $I \subseteq \mathbb{R}$ such that $0 \not \in I$;
    \item a diffeomorphism $\Phi\colon \Sigma \times I \to \widetilde M$ over the identity on $M$ such that $\phi \circ \Phi$ is the projection onto $I$.
\end{enumerate}
\end{theorem}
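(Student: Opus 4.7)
The plan is to take $\Sigma$ to be a level set of $\phi$ and to define $\Phi$ via the flow of $\Delta$; the main work is global and establishes that $\phi$ restricts to a diffeomorphism onto $I$ on each connected component of each $\tau$-fiber.

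Setting $I := \phi(\widetilde M)$, I would first verify $I$ is an open interval in $\mathbb{R}_\times$: $\d\phi(\Delta) = \phi$ is nowhere zero (Remark \ref{remark:Delta_applied_to_phi}), so $\phi$ is a submersion with open image, and connectedness of $\widetilde M$ makes this image a connected open subset of $\mathbb{R}_\times$, hence an open interval missing $0$. Fixing any $c_0 \in I$ and setting $\Sigma := \phi^{-1}(c_0)$, I would show $\pi := \tau|_\Sigma \colon \Sigma \to M$ is a covering: $T\Sigma = H|_\Sigma$ is transverse to $\ker \d\tau = \langle\Delta\rangle$ with matching rank, so $\pi$ is a local diffeomorphism; horizontal lifts of smooth curves in $M$ starting in $\Sigma$ remain in $\Sigma$ (since $\phi$ is constant along horizontal curves) and are complete by hypothesis, giving path lifting, which together with discreteness of the fibers yields the covering property.

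Next I would establish $[\Delta, X^h] = 0$ for every horizontal vector field $X^h$: because $\d\phi$ is closed, $\d\phi([\Delta, X^h]) = \Delta(\d\phi(X^h)) - X^h(\d\phi(\Delta)) = -X^h(\phi) = 0$ so $[\Delta, X^h] \in H$, while $\tau$-projectability of $X^h$ and verticality of $\Delta$ force $[\Delta, X^h]$ to be $\tau$-vertical; transversality $H \cap \ker \d\tau = 0$ closes the argument. Consequently the flow of $\Delta$ commutes with horizontal parallel transport. Using this I would show that for every connected component $C$ of every fiber of $\tau$, the restriction $\phi|_C \colon C \to I$ is a diffeomorphism. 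Monotonicity and the local-diffeomorphism property are immediate from $\d\phi(\Delta) \neq 0$ with $\Delta$ spanning $TC$; for surjectivity onto $I$ I would argue by contradiction. If $\phi(C) \subsetneq I$ for some component $C \subseteq \tau^{-1}(q)$ and some $c \in I \setminus \phi(C)$, the covering $\tau|_{\Sigma_c} \colon \Sigma_c \to M$ furnishes a different component $C' \subseteq \tau^{-1}(q)$ with $c \in \phi(C')$, in particular $\phi(C) \neq \phi(C')$. Since $\d\phi$ is closed, $H$ is Frobenius-integrable, hence the Ehresmann connection is flat; the induced holonomy representation of $\pi_1(M)$ on $\tau^{-1}(q)$ preserves $\phi$, so the $\phi$-image of a component is holonomy-invariant, placing $C$ and $C'$ in distinct holonomy orbits. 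The standard description of a flat bundle as an associated bundle of the universal cover would then force $\widetilde M$ to be disconnected, contradicting the hypothesis.

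Finally, I would define $\Phi \colon \Sigma \times I \to \widetilde M$ by $\Phi(p, c) := \operatorname{Fl}^\Delta_{\log(c/c_0)}(p)$; the ratio $c/c_0$ is positive because $I$ lies in a single connected component of $\mathbb{R}_\times$, and the flow exists for the required time because on the fiber component through $p$ the field $\Delta$ reads as $\phi\,\partial_\phi$ in the global coordinate $\phi$ from the previous step, whose flow $\phi \mapsto \phi e^s$ sweeps out all of $I$. By construction $\tau \circ \Phi(p,c) = \tau(p) = \pi(p)$ and $\phi \circ \Phi(p,c) = c$. Any $\widetilde p \in \widetilde M$ equals $\Phi\bigl((\phi|_{C(\widetilde p)})^{-1}(c_0),\,\phi(\widetilde p)\bigr)$ for $C(\widetilde p)$ the fiber component through $\widetilde p$, giving surjectivity; injectivity follows because $\phi$ recovers $c$ and the fiber component then determines $p \in \Sigma$. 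A tangent-space computation using $(\operatorname{Fl}^\Delta_s)^* \d\phi = e^s\,\d\phi$ (so $\operatorname{Fl}^\Delta_s$ preserves $H$) shows $\d\Phi$ is pointwise an isomorphism. The main obstacle is the surjectivity of $\phi|_C$ onto $I$, and my plan resolves it by combining the flatness of $H$ (from $\d\phi$ closed) with the connectedness of $\widetilde M$ to force the holonomy action on the components of each fiber to be transitive.
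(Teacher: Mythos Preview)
Your proof is correct and follows the same overall architecture as the paper: same choice of $I=\phi(\widetilde M)$, same $\Sigma=\phi^{-1}(c_0)$, same covering argument via horizontal path lifting, and the same map $\Phi$ built from the flow of $\Delta$ (the paper writes down the inverse $\Psi(x)=(\psi^\Delta_{\log|c|-\log|\phi(x)|}(x),\phi(x))$, which is exactly your $\Phi^{-1}$).

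The one place where you genuinely diverge is the surjectivity of $\phi|_C\colon C\to I$ for each connected fiber component $C$. The paper handles this in one sentence, pointing to ``a similar argument'' (an open/closed decomposition of $M$ using complete horizontal flows, as in its proof that $\tau|_\Sigma$ is surjective) together with connectedness of $\widetilde M$. You instead exploit that $H=\ker\d\phi$ is the kernel of a closed $1$-form, hence integrable, so the complete connection is flat; holonomy then preserves $\phi$ and therefore the $\phi$-image of each fiber component, and the associated-bundle description $\widetilde M\cong \widetilde{M_0}\times_{\pi_1(M)}F$ forces transitivity of the holonomy action on components by connectedness of $\widetilde M$. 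Both arguments are valid. The paper's is more elementary and self-contained; yours is more structural and makes explicit why the hypothesis that $\widetilde M$ is connected enters. Your auxiliary observation $[\Delta,X^h]=0$ (equivalently $(\mathrm{Fl}^\Delta_s)^\ast\d\phi=e^s\,\d\phi$, so the $\Delta$-flow preserves $H$) is not stated in the paper but is a clean way to see that $\Phi$ is a local diffeomorphism.
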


Before proving the Theorem, let us give a sufficient condition for a local diffeomorphism $\Sigma \rightarrow M$ to define a covering space.

\begin{lemma} 
\label{lemma:covering_space}
Let $\pi\colon \Sigma \to M$ be a surjective local diffeomorphism. Assume that for every path $\gamma\colon [0,1] \rightarrow M$ and for every $x \in \pi^{-1}(\gamma(0))$ there exists a lift of the path $\widetilde \gamma\colon [0,1] \rightarrow \Sigma$ with $\widetilde \gamma(0) = x$. Then, $\pi\colon \Sigma \rightarrow M$ is a covering space.
\end{lemma}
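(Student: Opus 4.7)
The plan is to show that every point $x\in M$ admits an open neighborhood that is evenly covered by $\pi$, which is the defining property of a covering map. First, I would fix $x\in M$ and choose an open neighborhood $U$ of $x$ contained in a coordinate chart and convex in that chart, so that for every $z\in U$ there is a canonical straight-line path $\gamma_z\colon [0,1]\to U$ from $x$ to $z$. Given $y\in\pi^{-1}(x)$, the hypothesis provides a lift of $\gamma_z$ starting at $y$, unique because $\pi$ is a local diffeomorphism (and hence a local homeomorphism, for which path lifts are automatically unique whenever they exist). I would then define $\sigma_y\colon U\to\Sigma$ by $\sigma_y(z):=\widetilde{\gamma_z}(1)$, so that $\pi\circ\sigma_y=\id_U$ by construction.

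The technical heart of the argument is continuity of $\sigma_y$, which I would establish by a compactness-plus-piecing construction. Since $\widetilde{\gamma_z}([0,1])$ is compact in $\Sigma$, it admits a cover by finitely many open sets $V_1,\dots,V_N$ on each of which $\pi$ restricts to a diffeomorphism onto its image, together with a subdivision $0=t_0<\cdots<t_N=1$ such that $\widetilde{\gamma_z}([t_{i-1},t_i])\subseteq V_i$ and $\widetilde{\gamma_z}(t_i)\in V_i\cap V_{i+1}$. For $z'$ sufficiently close to $z$, the straight-line paths $\gamma_{z'}$ are uniformly close to $\gamma_z$, which gives $\gamma_{z'}([t_{i-1},t_i])\subseteq\pi(V_i)$ and $\gamma_{z'}(t_i)\in\pi(V_i\cap V_{i+1})$. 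Concatenating the local inverses $(\pi|_{V_i})^{-1}\circ\gamma_{z'}$ then produces a continuous lift of $\gamma_{z'}$ whose endpoint depends continuously on $z'$; by uniqueness of lifts this concatenation equals $\widetilde{\gamma_{z'}}$, so $\sigma_y$ is continuous at $z$. Because any continuous section of a local diffeomorphism agrees locally with a local inverse of $\pi$, the image $\sigma_y(U)$ is open in $\Sigma$ and $\sigma_y$ is a diffeomorphism onto it.

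It then remains to verify that $\pi^{-1}(U)=\bigsqcup_{y\in\pi^{-1}(x)}\sigma_y(U)$. For the union, given $v\in\pi^{-1}(U)$ with $\pi(v)=z$, I would lift the reverse path $\overline{\gamma_z}$ (from $z$ to $x$) starting at $v$, obtaining an endpoint $y\in\pi^{-1}(x)$; reversing this lift and invoking uniqueness identifies $v$ with $\sigma_y(z)$. For disjointness, if $\sigma_y(z)=\sigma_{y'}(z)=w$, the unique lift of $\overline{\gamma_z}$ starting at $w$ must end simultaneously at $y$ and $y'$, forcing $y=y'$.

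The main obstacle is the continuity step for $\sigma_y$: one cannot invoke an evenly covered decomposition here, because producing such a decomposition is precisely the goal. The subdivision-and-piecing argument above, together with the overlap condition $\widetilde{\gamma_z}(t_i)\in V_i\cap V_{i+1}$ that forces the local inverses to agree on overlaps for all sufficiently close $z'$, is the crux of the proof; once continuity of $\sigma_y$ is secured, the remaining verifications are direct consequences of uniqueness of path lifts.
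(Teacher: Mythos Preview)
Your argument is correct and follows a standard route to the same conclusion, but it is organized differently from the paper's proof. The paper fixes an open $U\ni x$ diffeomorphic to $\mathbb{R}^n$ and proves that each \emph{connected component} $V$ of $\pi^{-1}(U)$ is mapped bijectively onto $U$: surjectivity is obtained by lifting a path in $U$ and observing that the connected image of the lift must stay in $V$; injectivity uses simple connectedness of $U$ together with a homotopy-lifting argument (contract a projected loop and track its lifted endpoint through the homotopy). By contrast, you build the sheets \emph{explicitly} as images of sections $\sigma_y$ obtained by lifting straight-line paths, and the technical core of your proof is the continuity of $\sigma_y$ via a Lebesgue-number/subdivision argument.

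Both approaches ultimately rest on the same analytic fact---lifts of paths under a local homeomorphism depend continuously on the path---but they deploy it differently. Your version isolates this step and proves it carefully, which makes the argument self-contained; the paper's ``by continuity we must have $\widetilde\gamma_t(1)=\widetilde x_2$'' is exactly the point you spell out, applied to a one-parameter family rather than a base-point family. On the other hand, the paper's use of connected components is conceptually economical: once bijectivity is shown, there is nothing to assemble or to check for disjointness, whereas you must verify $\pi^{-1}(U)=\bigsqcup_y \sigma_y(U)$ by separate union and disjointness arguments.
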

\begin{proof}
Consider $x \in M$ and let $U\ni x$ be an open neighborhood diffeomorphic to $\mathbb{R}^n$. Let $V$ be some connected component of $\pi^{-1}(U)$. Let us prove that 
\[\restr{\pi}{V} \colon V \rightarrow U \]
defines a diffeomorphism. Indeed, it is enough to show that it is bijective.

Let us first prove surjectivity. Take $\widetilde x \in V$ and denote $x := \pi(\widetilde x) \in U$. Let $y \in U$. Since $U$ is path connected, there exists a smooth path $\gamma\colon [0,1] \rightarrow U$ such that $\gamma(0) = x$ and $\gamma(1) = y$. By hypothesis, there exists a lift $\widetilde \gamma \colon [0,1] \rightarrow \Sigma$ such that $\widetilde \gamma(0) = \widetilde x$, and with $\pi \circ \widetilde \gamma = \gamma \circ \pi$. Since $\Ima \widetilde \gamma$ is connected and $\Ima \widetilde \gamma \cap V \neq \emptyset, $ we have $\Ima \widetilde \gamma \subseteq V$, given that $V$ is connected. Now, we have $\pi(\widetilde \gamma (1)) = \gamma( \pi (\widetilde \gamma(1))) = y$, which shows that $\pi|_V\colon V \rightarrow U$ is surjective.

Finally, let us show that it is injective. Suppose there are $\widetilde x_1,\widetilde x_2 \in V$ with $\pi(\widetilde x_1) = \pi(\widetilde x_2)$. Since $V$ is open and connected, it is path connected and there exists a smooth path $\widetilde \gamma\colon [0,1] \rightarrow \Sigma$ with $\widetilde \gamma(0) = \widetilde x_{1}$ and $\widetilde \gamma(1) = \widetilde x_{2}$. This map projects onto a loop $\gamma = \pi \circ \widetilde \gamma$ in $U$. Since, by definition $U$ is diffeomorphic to $\mathbb{R}^n$, then there exists a smooth loop homotopy $\gamma_t$ such that $\gamma_0 = \gamma$ and $\gamma_1$ is a constant path. Lifting these paths to paths $\widetilde \gamma_t$ satisfying $\widetilde \gamma_t(0) = \widetilde x_1$, by continuity we must have $\widetilde \gamma_t(1) = \widetilde x_2$. Taking $t = 1$, since $\gamma_1$ is constant, so is $\widetilde \gamma_1$, which implies $\widetilde x_1 = \widetilde x_2$, finishing the proof. 
\end{proof}

\begin{proof}[Proof of Theorem \ref{thm:Classification_multisymplectization}]
From Proposition \ref{prop:1_dimensional_fibers}, we have that all fibers are necessarily of dimension $1$ and, from Proposition \ref{prop:Liouville_is_vertical}, we conclude that the Liouville vector field $\Delta$ generates $\ker \d \tau$. Define $I:= \Ima \phi$, which is a connected and open subspace of $\mathbb{R}$, namely an open interval. Take $c \in I$ and define $\Sigma:= \phi^{-1}(c)$.

Let us prove that $\pi:= \restr{\tau}{\Sigma} \colon \Sigma \rightarrow M$ is a covering space. Let us first check that it is a surjection. Define
\[U:= \{x \in M \mid c \in \phi(\tau^{-1}(x))\}\qquad\text{and}\qquad V:= \{x \in M \mid c \not \in \phi(\tau^{-1}(x))\}\,.\]
Clearly, $M = U \cup V$, $U \cap V = \emptyset$, $U \neq \emptyset$ and $V$ is open. If we prove that $U$ is open as well, surjectivity will follow from connectedness of $M$. Let $x \in U$ and take a compact neighborhood $K$ of $x$. An arbitrary vector field $X \in \X(M)$ with support contained in $K$ is complete and, using completeness of the Ehresmann connection defined by $\ker \d \phi$, its horizontal lift $X^h$ will be complete as well. Let $\psi_t$, $\psi^h_t$, with $t \in \mathbb{R}$ be the global flows of $X$ and $X^h$, respectively. Since $X^h(\phi) = 0$, the diffeomorphisms $\psi_t^h\colon \widetilde M \rightarrow \widetilde M$ induce diffeomorphisms
\[\restr{\psi_t^h}{\Sigma} \colon \Sigma \rightarrow \Sigma\,.\] 
Noting that $ \pi \circ \psi_t^h = \psi^t \circ \pi$ and that by an adequate choice of a vector field $X$ we can set $\psi^1(x) = y$, for any $y$ in a neighborhood of $x$, we conclude that $U$ is open, and that $\pi$ is surjective. Finally, since the Ehresmann connection defined by $\ker \d \phi$ is complete, $\pi\colon \Sigma \rightarrow M$ satisfies the hypotheses of Lemma \ref{lemma:covering_space} and, therefore, it is a covering space.
Notice that using a similar argument, and the fact that $\widetilde M$ is connected, we can prove that $\phi$ defines a diffeomorphism between any connected component of a fiber $\tau^{-1}(x)$ and $I$.
Finally, define
$$ \begin{array}{rccc}
    \Phi\colon & \Sigma\times I & \longrightarrow & \widetilde M\\
    & (x,t) & \longmapsto & \restr{\phi}{\Sigma_x}^{\!\!\!-1}(t)
\end{array}    
$$
where $\Sigma_x$ is the connected component of $\tau^{-1}(\tau(x))$ that contains $x$. It is clearly bijective. The proof will be finished once we show that it is smooth and that it defines a local diffeomorphism. To show that it is smooth, let us show that its inverse $\Psi := \Phi^{-1}$ is smooth. Indeed, notice that
\[\phi_ \ast \Delta = t \pdv{t},\]
so that the flow of $\Delta$, which we denote by $\psi^{\Delta}_t$, satisfies $\phi(\psi^{\Delta}_t(x)) = e^{t} \phi(x)$. Therefore, the inverse is given by
\[
\begin{array}{rccc}
\Psi\colon & \widetilde M & \longrightarrow & \Sigma \times I\\
& x & \longmapsto & (\psi ^{\Delta}_{\log \abs{c} - \log \abs{\phi(x)}} (x), \phi(x))\,,
\end{array}
\]
which is clearly smooth. To show that it is a local diffeomorphism, notice that for $x \in \widetilde M$ we have
\[(\psi^{\Delta}_t)_\ast \ker \d_x \phi = \ker \d_{\psi^{\Delta}_t(x)}\phi\qquad\text{and}\qquad \phi_\ast \Delta = t \pdv{t}\,.\]
Since $\T M = \ker \d \phi \oplus \langle \Delta \rangle$, we conclude $\Psi_\ast \T M = \T \Sigma \times \T I$, proving that it is a diffeomorphism.
\end{proof} 

More generally, when $\ker \d \phi$ does not define a complete Ehresmann connection, we can prove the following:

\begin{theorem}
\label{thm:covering-multisymplectization}
Let $(M, \Theta)$ be a connected multicontact manifold and $\tau\colon \widetilde M \rightarrow M$ be a connected multisymplectization with conformal factor $\phi \in \Cinfty(\widetilde M)$. Assume that for every smooth curve $\gamma\colon[0,1] \rightarrow M$ there exists a smooth lift $\widetilde \gamma \colon [0,1] \rightarrow \widetilde M$ passing through any point in the fiber $\tau^{-1}(\gamma(0))$. Then, there exist
\begin{enumerate}[\rm(i)]
    \item a covering space $\pi\colon \Sigma \rightarrow M$;
    \item an open interval $I \subseteq \mathbb{R}$ such that $0 \not \in I$;
    \item an open embedding $\Psi\colon \widetilde M \to \Sigma \times I$ over the identity on $M$ such that the map $\restr{\phi \circ \Psi^{-1}}{\Ima \Psi}$ is the projection onto $I$.
\end{enumerate}
\end{theorem}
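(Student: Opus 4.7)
The strategy is to adapt the proof of Theorem \ref{thm:Classification_multisymplectization}, replacing completeness of the Ehresmann connection $\ker\d\phi$ by the weaker path-lifting hypothesis, which correspondingly weakens the final diffeomorphism to an open embedding.

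First I would invoke Propositions \ref{prop:1_dimensional_fibers} and \ref{prop:Liouville_is_vertical} to conclude that the fibers of $\tau$ are one-dimensional and that $\Delta$ is a nowhere zero vertical vector field generating $\ker\d\tau$. Setting $I := \Ima\phi$, connectedness of $\widetilde M$ forces $I$ to be an open interval in $\R_\times$ not containing $0$. Fix $c\in I$ and set $\Sigma := \phi^{-1}(c)$; since $\d\phi(\Delta)=\phi$ is nowhere zero, $c$ is a regular value, so $\Sigma$ is a codimension-$1$ embedded submanifold of $\widetilde M$ transverse to every fiber of $\tau$, and $\pi := \tau|_\Sigma\colon\Sigma\to M$ is a local diffeomorphism.

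Next I would verify that $\pi$ is a covering space via Lemma \ref{lemma:covering_space}. Given a smooth path $\gamma\colon[0,1]\to M$ and $\widetilde x\in\pi^{-1}(\gamma(0))$, the hypothesis supplies a smooth lift $\widetilde\gamma\colon[0,1]\to\widetilde M$ with $\widetilde\gamma(0)=\widetilde x$, but $\widetilde\gamma$ need not lie in $\Sigma$. I would project it along the Liouville flow by setting, whenever possible,
\[\widetilde\gamma_\Sigma(t) := \psi^\Delta_{s(t)}(\widetilde\gamma(t)),\qquad s(t) := \log\bigl|c/\phi(\widetilde\gamma(t))\bigr|,\]
so that $\phi(\widetilde\gamma_\Sigma(t))=c$ and $\tau(\widetilde\gamma_\Sigma(t))=\gamma(t)$, using the identity $\phi\circ\psi^\Delta_t = e^t\,\phi$. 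The subset $T\subseteq[0,1]$ on which this is defined contains $0$ (since $s(0)=0$), is open by openness of the flow domain and continuity of $s$, and the hope is to show closedness via a compactness/local-triviality argument: $(\phi\circ\widetilde\gamma)([0,1])$ is compact in $\R_\times$, so $s$ is uniformly bounded, and in a local trivialization of $\tau$ near a limit point $\gamma(t_\infty)$ a subsequential limit of $\widetilde\gamma_\Sigma(t_n)$ sits in $\Sigma$. Once $T=[0,1]$, Lemma \ref{lemma:covering_space} gives the covering property, and surjectivity of $\pi$ follows from connectedness of $M$ combined with this construction.

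Finally, I would define
\[\Psi(y) := \bigl(\psi^\Delta_{s(y)}(y),\,\phi(y)\bigr),\qquad s(y) := \log\bigl|c/\phi(y)\bigr|,\]
on the open subset of $\widetilde M$ where the Liouville flow at time $s(y)$ is defined at $y$. Smoothness, injectivity, and the local-diffeomorphism property follow exactly as in Theorem \ref{thm:Classification_multisymplectization}, using the decomposition $\T\widetilde M=\ker\d\phi\oplus\langle\Delta\rangle$ together with the fact that $\phi$ maps each connected component of a fiber diffeomorphically onto an open subinterval of $\R_\times$. The compatibility conditions $\tau = \pi\circ\operatorname{pr}_\Sigma\circ\Psi$ and $\phi\circ\Psi^{-1}=\operatorname{pr}_I$ on $\Ima\Psi$ are immediate from the construction. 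The main obstacle is the same one encountered when showing $T=[0,1]$: without completeness of $\Delta$, one cannot import the horizontal-lift-of-compactly-supported-vector-field argument used in Theorem \ref{thm:Classification_multisymplectization}, and one must instead combine the path-lifting hypothesis, local triviality of $\tau$, and compactness of $(\phi\circ\widetilde\gamma)([0,1]) \subseteq \R_\times$ in a careful connectedness argument on $[0,1]$. This is where essentially all the technical work concentrates.
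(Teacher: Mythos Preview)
Your approach diverges from the paper's in a way that creates a genuine gap. You take $\Sigma = \phi^{-1}(c)$ as a level set and then try to reach it from an arbitrary point of $\widetilde M$ by flowing along $\Delta$. But nothing in the hypotheses guarantees that the Liouville flow is defined for the required time $s(y)=\log|c/\phi(y)|$: the path-lifting assumption is purely about $\tau$-lifts of curves in $M$ (a ``horizontal'' condition) and says nothing about completeness of the ``vertical'' vector field $\Delta$. Concretely, a leaf of the foliation $\mathcal F$ generated by $\Delta$ is a connected component of a fiber of $\tau$, and $\phi$ maps it diffeomorphically onto some open subinterval of $\mathbb R_\times$ which need not contain $c$. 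So your $\Psi$ is only defined on the union of those leaves that happen to meet $\phi^{-1}(c)$, whereas the statement requires $\Psi$ to be defined on all of $\widetilde M$. The closedness of $T$ that you flag as ``the main obstacle'' is a symptom of the same issue and is not a matter of extra bookkeeping---it can genuinely fail.

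The paper sidesteps this entirely by taking $\Sigma := \widetilde M/\mathcal F$, the leaf space of the one-dimensional foliation by $\Delta$, and proving (via local $\phi$-level-set charts and short-time flows of $\Delta$) that the quotient is a smooth manifold with $p\colon\widetilde M\to\Sigma$ a submersion. Then $\Psi(x):=(p(x),\phi(x))$ is defined on \emph{all} of $\widetilde M$ with no flow needed, and the path-lifting hypothesis is used in the simplest possible way: lift $\gamma$ to $\widetilde M$ and project by $p$ to get the lift in $\Sigma$, so Lemma~\ref{lemma:covering_space} applies directly. The moral is that once completeness is dropped, a single level set is too rigid a model for $\Sigma$; the leaf space is the right replacement.
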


Before proving this result, we need the following lemma.

\begin{lemma} Define $\mathcal{F}$ to be the foliation defined by  $\langle \Delta\rangle$, the distribution generated by the Liouville vector field. Then, there exists a smooth manifold structure on $\Sigma = \widetilde M / \mathcal{F}$ so that the canonical projection $p\colon\widetilde M \rightarrow \Sigma$ defines a submersion.
\end{lemma}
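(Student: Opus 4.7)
The plan is to identify the leaves of $\mathcal{F}$ with the connected components of the fibers of $\tau$, and then to transport a smooth structure onto $\Sigma$ via local trivializations of the bundle $\tau\colon\widetilde M\to M$.

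First I would verify that $\Delta$ spans $\ker\d\tau$ pointwise. By Proposition \ref{prop:1_dimensional_fibers} the fibers of $\tau$ are $1$-dimensional; by Proposition \ref{prop:Liouville_is_vertical} the Liouville vector field $\Delta$ is $\tau$-vertical; and the identity $\Delta(\phi)=\phi$, together with the fact that $\phi$ is nowhere zero, shows that $\Delta$ itself vanishes nowhere. Consequently each leaf of $\mathcal{F}$ is contained in a single fiber of $\tau$, and since a nowhere vanishing vector field on a connected $1$-manifold has that whole manifold as its unique orbit, the leaves of $\mathcal{F}$ are precisely the connected components of the fibers. The identity $\Delta(\phi)=\phi$ moreover forces $\phi$ to be strictly monotonic along each leaf, ruling out closed orbits and making each leaf diffeomorphic to an open interval.

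Next I would construct charts on $\Sigma$. Given a trivializing open $U\subseteq M$ for $\tau$, fix an identification $\tau^{-1}(U)\cong U\times F$ with $F$ a model $1$-manifold. By the previous paragraph, the foliation $\mathcal{F}$ restricts to the foliation whose leaves are the slices $\{u\}\times C$ with $u\in U$ and $C$ ranging over the at most countable set $\pi_0(F)$. The image $p(\tau^{-1}(U))$ is therefore in canonical bijection with $U\times\pi_0(F)$, a disjoint union of at most countably many copies of $U$; I declare this bijection to be a chart. In it, $p$ becomes the projection $(u,f)\mapsto(u,[f])$, which is a surjective submersion. The transition functions of $\tau$ take values in $\mathrm{Diff}(F)$, and each such diffeomorphism descends to a permutation of the discrete set $\pi_0(F)$; this assignment is locally constant on overlaps, so the induced transitions between charts on $\Sigma$ are smooth. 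The local charts thus patch to a bona fide smooth atlas, and $p$ is a submersion globally.

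Finally I would check the topological hypotheses on $\Sigma$. Second countability follows from that of $M$ together with the countability of $\pi_0(F)$. For Hausdorffness, given two distinct classes $[x_1]\neq[x_2]$ I distinguish two cases: if $\tau(x_1)\neq\tau(x_2)$, I pull back disjoint open neighborhoods of the images in $M$ along the induced map $\overline{\tau}\colon\Sigma\to M$; otherwise $x_1$ and $x_2$ lie in distinct connected components of the same fiber, and in a local trivialization around that fiber they are separated by two disjoint open unions of plaques. The only nontrivial step in the whole argument is the identification of leaves of $\mathcal{F}$ with connected components of fibers of $\tau$; once this is in place, everything else reduces to routine bundle-theoretic bookkeeping.
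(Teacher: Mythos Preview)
Your argument is correct and takes a genuinely different route from the paper's. The paper does not use the local triviality of $\tau$ at all; instead it builds charts on $\Sigma$ from the level hypersurfaces $\Sigma_x := \phi^{-1}(\phi(x))$ of the conformal factor, which are transverse to $\mathcal{F}$ because $\Delta(\phi)=\phi\neq 0$, and checks smoothness of the transition maps by using the flow $\psi^\Delta_t$ of $\Delta$ (the relation $\phi(\psi^\Delta_t(\cdot))=e^t\phi(\cdot)$ carries one level set onto another, so overlaps reduce to changes of parametrization of a single hypersurface). Your approach is more structural: once the leaves of $\mathcal{F}$ are identified with the connected components of the fibers of $\tau$, the quotient $\Sigma$ is simply a locally trivial bundle over $M$ with discrete fiber $\pi_0(F)$, hence already a smooth manifold and a covering space of $M$. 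This buys you two things the paper's proof does not supply: an explicit verification of Hausdorffness and second countability, and the covering-space conclusion $\Sigma\to M$ needed in Theorem~\ref{thm:covering-multisymplectization} without invoking the path-lifting hypothesis and Lemma~\ref{lemma:covering_space}. The paper's route, in exchange, uses only $\phi$ and the Liouville flow, in keeping with the rest of the section.
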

\begin{proof}
First notice that, since $\phi$ is nowhere vanishing and $\Delta(\phi) = \phi$ (see Remark \ref{remark:Delta_applied_to_phi}), we have that $\phi$ defines a submersion onto its image. Let $x \in \widetilde M$ and $\Sigma_x := \phi^{-1}(\phi(x))$, which is a submanifold since $\phi(x)$ is a regular value of $\phi$. Notice that the restriction $\restr{p}{\Sigma_x} \colon \Sigma_x \rightarrow \Sigma$ defines an homeomorphism between an open neighborhood of $x$ in $\Sigma_x$, $U_x$, and an open neighborhood of $p(x)$ in $\Sigma$, $V_x$. Let $\varphi_x \colon \mathbb{R}^n \rightarrow U_x$ be a parametrization of $U_x$ and define the induced parametrization of $V_x$ as $\widetilde \varphi_x := p \circ \varphi_x$.
We will show that the collection $\mathcal{Z} := \{(V_x, \widetilde \varphi_x), x \in \widetilde M\}$ defines an atlas on $\Sigma$. Indeed, let us show that coordinate changes are smooth. Consider $y \in \widetilde M$ such that $V_y \cap U_x \neq \emptyset$. Now, since $\varphi_\ast \Delta = t \pdv{t}$, denoting by $\psi^{\Delta}_t$ the flow of $\Delta$, we have $\phi(\psi^\Delta_t(x)) = e^t \phi(x)$. If $\varphi_x$ is a parametrization of $\Sigma_x$ on $(p|_{\Sigma_x})^{-1}(V_x\cap V_y)$, then $\psi^{\Delta}_{\log \abs{\phi(y)} - \log\abs{\phi(x)}} \circ \varphi_x$ is a parametrization of $\Sigma_y$ on $(p|_{\Sigma_y})^{-1}(V_c \cap V_y)$. Therefore, if $\varphi_y$ is a different parametrization, the coordinate change $\widetilde \varphi_y^{-1} \circ \widetilde \varphi_x = \varphi^{-1}_y \circ \psi^{\Delta}_{\log \abs{\phi(y)} - \log\abs{\phi(x)}} \circ \varphi_x $ is smooth, proving the claim. Furthermore, from the construction, it is easy to see that $p$ defines a submersion.
\end{proof}

\begin{proof}[Proof of Theorem \ref{thm:covering-multisymplectization}]
Since the distribution generated by $\Delta$ is $1$-dimensional, it is involutive. Let $\mathcal{F}$ denote the associated foliation and define \[
\Sigma := \widetilde M / \mathcal{F}\,.
\] 
Notice that $\tau_\ast \langle\Delta \rangle = \{0\}$ (since $\Delta$ is vertical) so that we have an induced map $\pi\colon \Sigma \rightarrow M$, which is a local diffeomorphism. Furthermore, since $\widetilde M$ has the path lift property, so does $\Sigma$ (by projection) and, therefore, using Lemma \ref{lemma:covering_space}, $\pi\colon \Sigma \rightarrow M$ defines a covering space.
Finally, it is enough to define
\[
\begin{array}{rccc}
\Phi \colon & \widetilde M & \longrightarrow & \Sigma \times I\\ 
& x & \longmapsto & (p(x), \phi(x))\,,
\end{array}
\]
which is clearly an embedding.
\end{proof}

%\begin{remark}  In this line of thought, it would also be interesting to develop an inverse procedure to multisymplectization, some ``multicontactification'', following the ideas in \cite{Grabowska_2024}.\end{remark}

\section{The \texorpdfstring{$\sharp$}{} mapping}

\label{section:sharp_mapping}

Every contact manifold $(M, \eta)$ has an underlying Jacobi structure, namely a pair $(\Lambda, E)$ consisting of a bivector and vector field, respectively, such that
\[[\Lambda, \Lambda] = 2 E \wedge \Lambda\qquad\text{and}\qquad [\Lambda, E] = 0\,.\]
Then, there is an induced map $\sharp_\Lambda \colon\cT M \rightarrow \T M$ given by $\sharp_\Lambda(\alpha) = \iota_{\alpha} \Lambda$
and the Jacobi bracket of two functions $f, g \in \Cinfty(M)$ can be written as 
\begin{equation}
\label{eq:Jacobi_bracket_contact}
    \{f, g\} = \iota_{\sharp_{\Lambda}(\d f)} \d g + E(g) f - E(f) g\,.
\end{equation}
The Jacobi structure may be interpreted as a linearized version of the Jacobi brackets on a contact manifold, and studying one or the other are complete equivalent. Consequently, recent research has been devoted to study the equivalent analogue of Poisson and Dirac structures from classical mechanics to classical field theories (see \cite{BMR_17, CFRZ_16, LI_25, Z_12}), thus looking for a linearized version of the (now graded) Poisson brackets. The vector bundle map $\sharp_\Lambda$ is generalized to a vector bundle map
\[\sharp \colon \mathcal{Z}^n \longrightarrow \T M,\]
where $\mathcal{Z}^n$ is a vector subbundle of $\bigwedge^n M$ and $\sharp$ is skew-symmetric, namely 
\[\iota_{\sharp(\alpha)} \beta = -\iota_{\sharp(\beta)} \alpha\,,\]
for every $\alpha, \beta \in \mathcal{Z}^n$. In this section we define the analogue of the previous $\sharp$ mapping when dealing with a multicontact manifold $(M, \Theta)$, where $\Theta$ has now arbitrary order $n$. We draw inspiration from the theory of Dirac--Jacobi bundles, a common generalization of Jacobi and pre-contact manifolds introduced by Vitagliano in \cite{Vit_2018}. Most notably, instead of dealing with subbundles $\mathcal{Z}^n \subseteq \bigwedge^n M$, we need to work with subbundles 
\[
\mathcal{Z}^n \subseteq \bigwedge^{n-1}M \oplus \bigwedge^{n} M
\]
and maps 
\[
\sharp \colon \mathcal{Z}^n \longrightarrow \T M \oplus \mathbb{R}\,.
\]

 Let us first introduce some elementary operations on the bundles of pairs of forms and multivector fields
\[
\bigwedge^aM \oplus \bigwedge^{a+1} M\,, \qquad \bigvee_p M \oplus \bigvee_{p-1} M\,.
\]
For the particular cases $p = 0$ and $p = 1$ we understand $\bigvee_{0} M = M \times \mathbb{R}$ and $\bigvee_{-1} M = M \times \{0\}$. In these bundles there is a well defined contraction, namely for $(\alpha, \beta) \in \bigwedge^{a}M \oplus \bigwedge^{a+1} M$ and $(U, V) \in \bigvee_p M \oplus \bigvee_{p-1} M$, we set
\[
\iota_{(U, V)}(\alpha, \beta) := (\iota_U \alpha, \iota_U \beta + (-1)^{a}\iota_V \alpha) \in \bigwedge^{a-p} M \oplus \bigwedge^{a+1-p} M\,.
\]
Analogously, we define the wedge product, for $(U_1, V_1) \in \bigvee_p M \oplus \bigvee_{p-1} M$, and $(U_2, V_2) \in \bigvee_q M\oplus \bigvee_{q-1} M$
\[
(U_1, V_1)\wedge (U_2, V_2) := (U_1 \wedge U_2, V_1\wedge U_2 + (-1)^{pq} V_2 \wedge U_1) \in \bigvee_{p+q} M \oplus \bigvee_{p+q-1} M \,.
\]

\begin{proposition}
\label{prop:properties_of_pairwise_product}
The wedge product and contraction defined above satisfy the following equalities:
\begin{enumerate}[\rm (i)]
    \item For $(U_1, V_1) \in \bigvee_p M \oplus \bigvee_{p-1} M$ and $(U_2, V_2) \in \bigvee_q M \oplus \bigvee_{q-1} M$ we have 
    \[
    (U_1, V_1) \wedge (U_2, V_2)= (-1)^{pq} (U_2, V_2) \wedge (U_1, V_1)\,.
    \]
    \item For $(U_1, V_1) \in \bigvee_p M \oplus \bigvee_{p-1} M$, $(U_2, V_2) \in \bigvee_q M \oplus \bigvee_{q-1} M$ and $(\alpha, \beta) \in \bigwedge^a M \oplus \bigwedge^{a+1} M$, we have 
    \[
    \iota_{(U_2, V_2)}\iota_{(U_1, V_i)} (\alpha, \beta) = \iota_{(U_1, V_1) \wedge (U_2, V_2)}(\alpha, \beta)\,.
    \]
\end{enumerate}
\end{proposition}
\begin{proof} Both statements follow from a straightforward computation.
\end{proof}

%\begin{remark} The notions of wedge product of pairs of forms, exterior derivative and Lie derivative may be extended to this setting. \end{remark}

Let $\mathcal{Z}^{n+1} := \langle \Theta \rangle \oplus \langle \d \Theta \rangle \subseteq \bigwedge^{n} M \oplus \bigwedge^{n+1} M$ and define for $a = 1 ,\dotsc, n$, 
\begin{align*}
    \mathcal{Z}^a &:= \iota_{\bigvee_{n+1-a} M \oplus \bigvee_{n-a} M}\left( \mathcal{Z}^{n+1}\right)\\
    &:= \left\langle \iota_{(U, V)}(\Theta, \d \Theta) : (U, V) \in \bigvee_{n+1-a}M \oplus \bigvee_{n-a} M \right\rangle
\end{align*}
We have the following description of the previous subspaces
%{\textcolor{red}{Coinciden las dos definiciones hasta n-1?}}
\begin{lemma}
\label{lemma:description_of_Za}
For $a = 1, \dotsc, n$, we have
\[
\mathcal{Z}^a = \left\{ \left(\iota_U \Theta, \iota_U \d \Theta + \iota_V \Theta\right) \colon\, U \in \bigvee_{n+1-a} M,\, V \in \bigvee_{n-a} M\right\}\,,\]
and, for $a  = 1, \dotsc, n-1$:
\[
\mathcal{Z}^a = \iota_{\T M \oplus \mathbb{R}} \mathcal{Z}^{a+1} = \left\langle\iota_{(X, r)} (\alpha, \beta) \colon (\alpha, \beta) \in \mathcal{Z}^{a+1}\,, X \in \T M\,, r \in \mathbb{R}  \right\rangle\,.
\]
\end{lemma}
\begin{proof} The first equality follows easily from the definition:
\begin{align*}
    \mathcal{Z}^a &=  \langle \iota_{(U, V)}(\Theta, \d \Theta) : (U, V) \in \bigvee_{n+1-a}M \oplus \bigvee_{n-a} M \rangle\\
    &=  \langle (\iota_U \Theta, \iota_U\d \Theta + (-1)^{a} \iota_V \Theta : (U, V) \in \bigvee_{n+1-a}M \oplus \bigvee_{n-a} M \rangle\,.
\end{align*}
The second equality follows similarly, particularizing in the case where both $U$ and $V$ are decomposable.
\end{proof}

We will assume throughout the rest of the section that the subspaces $\mathcal{Z}^a$ define subbundles, for every $a = 1, \dotsc, n$.

\begin{remark} One immediate application of the introduction of the previous subbundles is the following: an $a$-form $\alpha \in \Omega^{a}(M)$ is conformal Hamiltonian if and only if $(- \alpha, (-1)^{n +1-a}\d \alpha) \in \mathcal{Z}^{a+1}$. Indeed, in light of Eq. \eqref{eq:Hamiltonian_form}, we have 
\begin{align*}
    - \alpha &= \iota_{X_\alpha} \Theta\,,\\
    (-1)^{n+1-a}\d \alpha &= \iota_{X_\alpha } \d \Theta + (-1)^{n-a} \iota_{V_\alpha } \Theta \,.
\end{align*}
\end{remark}

\begin{remark} One may remove the signs in the condition $(- \alpha, (-1)^{n+1-a} \d \alpha) \in \mathcal{Z}^a$ by introducing a different sign convention in the definition of the subbundles. We prefer to work with the presented convention, as it simplifies the computations to come. 
\end{remark}

We are now ready to introduce the definition of the $\sharp$-mapping:

\begin{definition} The {\bf $\widehat \sharp$-mapping} is the unique vector bundle map 
\[
\widehat \sharp \colon \mathcal{Z}^n \longrightarrow \T M \oplus \mathbb{R}
\]
satisfying $\widehat\alpha = \iota_{\widehat{\sharp}(\alpha)} (\Theta, \d \Theta)$, for every $\widehat\alpha \in \mathcal{Z}^{n}$.
\end{definition}

\begin{proposition} The $\widehat \sharp$-mapping is well defined.
\end{proposition}
\begin{proof} The proof follows if we show that if for certain $X \in \T M$ and $r \in \mathbb{R}$ we have 
\[
\iota_X \Theta =0 \,,\qquad \iota_X \d \Theta + r  \Theta = 0\,,
\]
then $X = 0$ and $r = 0$. Indeed, recall that $\ker_1 \d \Theta \neq 0$ so, by contracting by an element $ 0 \neq Y \in \ker_1 \d \Theta$ in the second equation we have $r\,\iota_Y\Theta = 0$. Since $\ker_1 \Theta \cap \ker_1 \d \Theta = \{0\}$, we necessarily have $Y \not \in \ker_1 \Theta$, so that $r = 0$. Therefore, $\iota_X \Theta = 0$ and $\iota_X \d \Theta = 0$. Using again $\ker_1 \Theta \cap \ker_1 \d \Theta = \{0\}$, we conclude $X = 0$.
\end{proof}

\begin{remark} Notice that for a conformal Hamiltonian $(n-1)$-form $\alpha \in \Omega^{n-1}_H(M)$, we have 
\[
\widehat{\sharp} (- \alpha, \d  \alpha) = (X_\alpha, (-1)^{n-1} V_\alpha)\,,
\]
where $V_\alpha \in \Cinfty(M)$ is the conformal factor. 
\end{remark}

Hence, we may recover both the conformal Hamiltonian vector field and the conformal factor using the $\widehat \sharp $-mapping. Therefore, it is convenient to find a generalization to degrees $a < n$. First notice that $\widehat \sharp$ is {\it skew-symmetric}, in the following sense:
\begin{proposition} 
\label{prop:skew_symmetry}
For every $\alpha \in \mathcal{Z}^n$ and $ \beta \in \mathcal{Z}^n$ we have 
\[
\iota_{\widehat \sharp (\alpha)} \beta = - \iota_{\widehat\sharp (\beta)} \alpha\,.
\]
\end{proposition}
\begin{proof} Indeed, it is enough to show $\iota_{\widehat{\sharp}(\alpha)} \alpha = 0$, which follows from 
\[
\iota_{\widehat{\sharp}(\alpha)} \alpha = \iota_{\widehat{\sharp}(\alpha)} \iota_{\widehat{\sharp}(\alpha)} (\Theta, \d \Theta) = \iota_{\widehat{\sharp}(\alpha) \wedge \widehat{\sharp}(\alpha)} (\Theta, \d \Theta) = 0\,,
\]
where in the last two equalities we have used Proposition \ref{prop:properties_of_pairwise_product}.
\end{proof}

Now, to obtain the graded version of $\widehat{\sharp}$, we need to introduce the following subbundles. For $p = 1, \dotsc, n$, define
\[
K_p := (\mathcal{Z}^p)^{\circ, p} =\left\{ U \in \bigvee_{p}M \oplus \bigvee_{p-1} M \colon\,\, \iota_U \alpha = 0\,, \forall \alpha \in \mathcal{Z}^p \right\}\,.\]
Now we may obtain the $\widehat \sharp$ maps as follows:

\begin{definition}
\label{def:graded_sharp_maps}
By Lemma \ref{lemma:description_of_Za}, we know that $\mathcal{Z}^a$ is generated by elements of the type $\iota_{\widehat U} \widehat \alpha$, with $\widehat U \in \bigvee_{n-a} M \oplus \bigvee_{n-1-a} M$  and $\widehat \alpha \in \mathcal{Z}^a$. We define the extensions \[
\widehat{\sharp}_a \colon \mathcal{Z}^a \longrightarrow \left(\bigvee_{n+1-a} M \oplus \bigvee_{n-a} M \right)  \bigg / K_{n+1-a}
\] on such elements as
$
\widehat{\sharp}_a(\iota_U \alpha) = \widehat{\sharp}(\alpha) \wedge U + K_{n+1-a}\,,
$
and extend by linearity to $\mathcal{Z}^a$.
\end{definition}

We only need to check well-definedness:

\begin{proposition}
\label{prop:well_definedness_of_sharp}
The $\widehat \sharp$-maps of Definition \ref{def:graded_sharp_maps} are well defined.
\end{proposition}

\begin{proof} Indeed, suppose that there was a sequence of pairs $U_i \in \bigvee_{n-a}M \oplus_M \bigvee_{n-1-a} M$ and $\alpha^i \in \mathcal{Z}^n$ such that $\iota_{U_i} \alpha^i = 0$ (sum intended), then we will show that 
\[
\widehat{\sharp}(\alpha^i) \wedge U_i \in K_{n+1-a}\,, 
\]
which will end the proof. Indeed, let $\iota_V\beta \in \mathcal{Z}^{n+1-a}$, for certain $V \in \bigvee_{a-1} M \oplus \bigvee_{a-2} M$ and $\beta \in \mathcal{Z}^a$. Using the properties of Proposition \ref{prop:properties_of_pairwise_product} and the skew-symmetry of Proposition \ref{prop:skew_symmetry} we have
\begin{align*}
    \iota_{\widehat{\sharp} (\alpha^i) \wedge U_i} (\iota_V\beta) &= \iota_{U_i} \iota_{\widehat{\sharp}(\alpha^i)} (\iota_V\beta) = (-1)^{a-1} \iota_{U_i} \iota_V \iota_{\widehat{\sharp}(\alpha^i)} \beta \\
    &= (-1)^{a} \iota_{U_i} \iota_V \iota_{\widehat{\sharp}(\beta)} \alpha^i = (-1)^{(n+1-a)\cdot a } \iota_{V} \iota_{\widehat{\sharp}(\beta)}\left( \iota_{U_i} \alpha^i\right)\\
\end{align*}
which vanishes by hypotheses and shows that $\widehat{\sharp}(\alpha^i) \wedge U_i \in K_{n+1-a}$, as $\beta \in \mathcal{Z}^{n}$, $V \in \bigvee_{a-1} M \oplus \bigvee_{a-2}$ is arbitrary, and $\iota_V \beta$ generates $\mathcal{Z}^{n+1-a}$, finishing the proof.
\end{proof}

\begin{remark} Notice that 
\begin{align*}
    K_p &= \bigg\{(U, V) \in \bigvee_p M \oplus \bigvee_{p-1} M \colon \iota_{(U, V)}(\Theta, \d \Theta) = 0\bigg\}\\
    &= \bigg\{(U, V) \in \bigvee_p M \oplus \bigvee_{p-1} M \colon U \in \ker_p \Theta\,, \iota_U \d \Theta = (-1)^n \iota_V \Theta\bigg\},
\end{align*} and then, given the description of the $\widehat{\sharp}$-mappings above, for every $\widehat \alpha \in \mathcal{Z}^a$, we have $\widehat \alpha = \iota_{\widehat{\sharp}_a(\alpha)} (\Theta, \d \Theta)$
\end{remark}

\begin{remark} There is a well defined contraction $\iota_{\left[(U, V)\right]} \widehat \alpha$, for \[\left[(U, V)\right] \in \left(\bigvee_{n+1-p} M \oplus \bigvee_{n-p} M \right)\bigg / K_p\] and $\widehat \alpha \in \mathcal{Z}^a$, with $p \leq a$. Indeed, for pairs $(U, V) \in K_p$ we have $\iota_{(U, V)} \widehat \alpha = 0$, as a quick comprobation shows (using Lemma \ref{lemma:description_of_Za} {\rm (ii)}).
\end{remark}

This last remark implies that we also get a graded-skew-symmetry generalizing Proposition \ref{prop:skew_symmetry}. Indeed, the proof is a similar computation to that performed in Proposition \ref{prop:well_definedness_of_sharp}:

\begin{proposition}
\label{prop:graded_skew_symmetry}
For $\alpha \in \mathcal{Z}^a$ and $\beta \in \mathcal{Z}^b$ we have 
\[
\iota_{\widehat{\sharp}_a(\alpha)} \beta = (-1)^{(n+1-a)(n+1-b)} \iota_{\widehat{\sharp}_b(\beta)} \alpha \,.
\]
\end{proposition}

\begin{proof} By using the description given in Definition \ref{def:graded_sharp_maps}, it is suffices to show it for decomposable elements $\iota_U \alpha$, and $\iota_V \beta$ for certain $U \in \bigvee_{n-a} M \oplus \bigvee_{n-1-a} M$, $V \in \bigvee_{n-b} M \oplus \bigvee_{n-1-b} M$ and $\alpha, \beta \in \mathcal{Z}^n$. Indeed, again using Proposition \ref{prop:properties_of_pairwise_product} and Proposition \ref{prop:skew_symmetry},
\begin{align*}
    \iota_{\widehat{\sharp}_a(\iota_{U} \alpha)} (\iota_V\beta) &= \iota_{\widehat{\sharp}(\alpha) \wedge U} (\iota_V\beta) = \iota_U \left( \iota_{\widehat{\sharp}(\alpha)}(\iota_V \beta)\right)\\
    & = (-1)^{n-b} \iota_U \iota_V \iota_{\widehat{\sharp}(\alpha)} \beta= (-1)^{n-1-b} \iota_U \iota_V \iota_{\widehat{\sharp}(\beta)} \alpha \\
    & = (-1)^{n-1-b}  \iota_U \iota_{\widehat{\sharp}(\beta) \wedge V} \alpha\\
    &= (-1)^{n+1-b + (n-a)\cdot(n+1-b)} \iota_{\widehat{\sharp}(\beta) \wedge V} \left(  \widehat{\sharp}(\alpha) \wedge U\right)\\
    &= (-1)^{(n+1-a)\cdot (n+1-b)} \iota_{\widehat{\sharp}_{b}(\iota_V \beta)} \left( \iota_U \alpha\right)\,,
\end{align*}
which proves the result.
\end{proof}

\begin{remark} Actually, using some straight-forward computations, we may show that the formula given in Definition \ref{def:graded_sharp_maps} is actually the unique possible family of maps 
\[
\widehat{\sharp}_a \colon \mathcal{Z}^a \longrightarrow \left(\bigvee_{n+1-a} M \oplus \bigvee_{n-a} M \right)  \bigg / K_{n+1-a}
\]
that satisfy the graded-skew-symmetry of Proposition \ref{prop:graded_skew_symmetry}. Indeed, if it satisfies graded skew symmetry, for $\alpha \in \mathcal{Z}^a$, $U \in \bigvee_{n-a} M \oplus \bigvee_{n-1-a} M$ and $\beta \in \mathcal{Z}^{n+1-a}$, we should have 
\begin{align*}
    \iota_{\widehat{\sharp}_a(\iota_{U} \alpha)} \beta &= (-1)^{(n-1-a)\cdot a} \iota_{\widehat{\sharp}_{n+1-a}(\beta)} \iota_U \alpha = (-1)^{(n+1-a)\cdot a + (n-a)\cdot a} \iota_U \iota_{\widehat{\sharp}_{n+1-a}(\beta)}\alpha \\
    &= (-1)^{a}\iota_U \iota_{\widehat{\sharp}_{n+1-a}(\beta)}\alpha =  \iota_U  \iota_{\widehat{\sharp}(\alpha)} \beta = \iota_{\widehat{\sharp}(\alpha) \wedge U} \beta\,,
\end{align*}
which implies 
\[
\widehat{\sharp}_a(\iota_{U} \alpha) = \widehat{\sharp}(\alpha) \wedge U + K_{n+1-a}\,,
\]
as the equality holds for arbitrary $\beta \in \mathcal{Z}^{n+1-a}$. In fact, this is a generalization of the study some of the authors performed in \cite{LI_25}.
\end{remark}

Now that we have the graded version of the $\widehat{\sharp}$-mapping, we can obtain the Hamiltonian multivector field and conformal factor of an arbitrary conformal Hamiltonian form $\alpha \in \Omega^{a}_H(M)$ as 
\[
\widehat{\sharp}_{a+1}(- \alpha, (-1)^{n+1-a} \d \alpha) = (X_\alpha, (-1)^a V_\alpha) + K_{n-a}\,.
\]
Also, by introducing the following generalization of the Schouten--Nijenhuis bracket for pairs $(U_1, V_1) \in\X^p(M) \oplus \X^{p-1}M$ and $(U_2, V_2) \in \X^q (M) \oplus \X^{q-1}(M)$:
\[
[(U_1, V_1), (U_2, V_2)] := ([U_1, U_2], (-1)^{p-1} [U_1, V_2] - (-1)^{p (q-1)} [U_2, V_1]) \in \X^{p+q-1}(M) \oplus \X^{p+q-2}(M)\,,
\]
we get have that and we may write the bracket in terms of these $\widehat{\sharp}$-mappings, and that $\widehat \sharp$ maps the previous Schouten--Nijenhuis bracket into the graded Jacobi bracket

\begin{theorem} 
\label{thm:bracket_formula_sharp}
Let $\alpha \in \Omega^a_H(M)$ and $\beta \in \Omega^b_H(M)$ be conformal Hamiltonian forms. Then the following holds:
\begin{enumerate}[\rm (i)]
    \item We may express the bracket with the $\widehat \sharp$-maps as:
    \[
(\alpha \vee \beta, (-1)^{a} \{\alpha, \beta\} ) = \iota_{\widehat{\sharp}_{b+1}(- \beta, (-1)^{n+1-b} \d \beta)} (- \alpha, (-1)^{n+1-a} \d \alpha) + (0, (-1)^{a+b}\d \left(\alpha \vee \beta\right))\,.
\]
    \item The $\widehat \sharp$-map satisfies the following bracket preserving formula:
    \[
\widehat{\sharp}_{a+b - (n-1)}(- \{\alpha, \beta\}, (-1)^{a+b}\d \{\alpha, \beta\}) = \left[\widehat{\sharp}_a(- \alpha, (-1)^{n+1-a} \d \alpha), \widehat{\sharp}_b(- \beta, (-1)^{n+1-b} \d \beta) \right]\,.
\]
\end{enumerate}

\end{theorem}
\begin{proof} Using Eq. \eqref{eq:Hamiltonian_form}, both follow from a straight-forward computation.
\end{proof}

\section{Multicontact field equations}
\label{section:Field_equations}

In this section we study a possible definition of multicontact field equations in a particular, but still quite general, subclass of multicontact manifolds. We look for equations that not only are dependent on the geometry, but also depend on a choice of Hamiltonian, as is usually done in symplectic and contact mechanics. In contact mechanics, the evolution equation of the motion determined by a Hamiltonian $H \in \Cinfty(M)$ on a contact manifold $(M, \eta)$ is given by 
\begin{equation}
\label{eq:Contact_evolution}
    \dot{g} = X_H(g) = \{H, g\} - {R}(H) g\,,
\end{equation}
where $R$ is the Reeb vector field defined by $\eta$ and $\{H, g\}$ denotes the usual Jacobi bracket. 

In our case, a Hamiltonian will be a suitable $n$-form $h \in \Omega^{n}(M)$, and the evolution of conformal Hamiltonian $(n-1)$-forms will be given by 
\begin{align*}
       \psi^\ast({\d \alpha}) &= -\mathcal{R}(\d \alpha) \cdot h - \iota_{X_\alpha} \d h - \left(\iota_{\mathcal{R}} \d h\right) \wedge \alpha\\
       &= - \left(\Lie_{X_\alpha} + \mathcal{R}(\alpha) \right) h - (\iota_{\mathcal{R}} \d h) \wedge \alpha +\d \left(\iota_{X_\alpha} h \right),
\end{align*}
where $\mathcal{R}$ is the {\it Reeb multivector field}, a generalization of the Reeb vector field, and $\psi: X \rightarrow M$ is a smooth map from an $n$-dimensional manifold to the multicontact manifold $(M, \Theta)$. Using Remark \ref{remark:Expression_for_Jacobi_bracket}, extending the domain of definition of brackets to $n$-forms, the previous equation may be interpreted as 
\[\psi^{\ast}(\d \alpha) = \{h, \alpha\}- (\iota_{\mathcal{R}} \d h) \wedge \alpha +\d \left(\iota_{X_\alpha} h \right)\,,\]
which clearly generalizes Eq. \eqref{eq:Contact_evolution}.

All over this section we will assume that $\ker_1 \d \Theta$ defines a vector subbundle of constant rank.

\subsection{General multicontact Hamilton--de Donder--Weyl equations and variational multicontact manifolds}

{Throughout this subsection we assume $n > 1$.} {Let us first define the Reeb multivector field. One first attempt would be to set $\mathcal{R}$ as the unique $n$-multivector field up to $\ker_n \Theta \cap \ker_n \d \Theta$ such that
$\iota_{\mathcal{R}} \Theta = 1$ and $\iota_{\mathcal{R}} \d \Theta = 0$.
However, a finer definition can be made. Let $\flat_\Theta$ denote contraction by $\Theta$, namely
\[
\begin{array}{rccc}
    \flat_\Theta\colon & \ker_1 \d \Theta & \longrightarrow & \bigwedge^{n-1} M\\
    & v & \longmapsto & \iota_v \Theta\,.
\end{array}
\]
Notice that it defines a monomorphism. Then, we may interpret its inverse $\flat_\Theta^{-1} \colon \Ima \flat_\Theta\rightarrow \ker_1 \dd \Theta$ as a tensor
\[
\flat_\Theta^{-1} \in \ker_1 \d \Theta \otimes (\Ima \flat_\Theta)^\ast \cong \ker_1 \d \Theta \otimes  \left(\bigvee_{n-1}M \Big / (\Ima \flat_\Theta)^{\circ, n - 1}  \right)\,.
\]

 \begin{definition} Define the {\bf Reeb multivector field} as $ \widetilde{\mathcal{R}} := \frac{1}{n} \flat_\Theta ^{-1}$, interpreted as a tensor taking values in the vector bundle above.
 \end{definition}
 
Proposition \ref{prop:relation_of_multivectors} will explain why $\widetilde{\mathcal{R}}$ is a finer choice than defining $\mathcal{R}$ modulo $\ker_n \Theta \cap \ker_n \d \Theta$ by the equations $\iota_{\mathcal{R}} \Theta = 1$ and $\iota_{\mathcal{R}} \d \Theta$, which will also justify the choice for the name {\it multivector}. First, notice that we have a well defined mapping 
\[
\begin{array}{rccc}
\phi\colon & \ker_1 \d \Theta \otimes\left( \bigvee_{n-1}M \Big / (\Ima \flat_\Theta)^{\circ, n - 1} \right) & \longrightarrow & \bigvee_{n}M/ K_n \\
& R \otimes (U + (\Ima \flat_\Theta)^{\circ, n - 1}) & \longmapsto & R \wedge U + K_n\,,
\end{array}
\]
where $K_n = \ker_n \Theta \cap \ker_n \d \Theta$, since $\ker_1 \d \Theta \wedge (\Ima \flat_{\Theta})^{\circ, n - 1} \subseteq K_n$.
\begin{proposition}
\label{prop:relation_of_multivectors}
We have $\phi(\widetilde {\mathcal{R}}) = \mathcal{R}$.
\end{proposition}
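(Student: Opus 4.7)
The strategy is to check that the class $\phi(\widetilde{\mathcal{R}}) \in \bigvee_n M / K_n$ satisfies the two conditions that characterise $\mathcal{R}$ as a class modulo $K_n$, namely $\iota_\mathcal{R}\Theta = 1$ and $\iota_\mathcal{R}\d\Theta = 0$. Since these two conditions uniquely determine an element of $(\mathcal{Z}^n)^\ast \cong \bigvee_n M / K_n$, the equality $\phi(\widetilde{\mathcal{R}}) = \mathcal{R}$ will follow immediately.

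Concretely, I would work in a local frame $\{R_1, \dots, R_n\}$ of $\ker_1 \d\Theta$. Since $\flat_\Theta$ is a monomorphism, the $(n-1)$-forms $\iota_{R_i}\Theta$ furnish a local frame of $\Ima \flat_\Theta$. I then pick multivector fields $U^1, \dots, U^n \in \bigvee_{n-1} M$ satisfying $\iota_{U^j}(\iota_{R_i}\Theta) = \delta^i_j$, well-defined modulo $(\Ima\flat_\Theta)^{\circ, n-1}$, which give a dual frame of $\bigvee_{n-1} M / (\Ima\flat_\Theta)^{\circ, n-1} \cong (\Ima \flat_\Theta)^\ast$. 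Under the canonical isomorphism $\Ima\flat_\Theta \otimes (\Ima\flat_\Theta)^\ast \cong \operatorname{End}(\Ima\flat_\Theta)$, the identity element reads $\sum_i \iota_{R_i}\Theta \otimes [U^i]$, so its pre-image under $\flat_\Theta \otimes \operatorname{Id}$ is $\sum_i R_i \otimes [U^i]$. Therefore
\[
\widetilde{\mathcal{R}} = \tfrac{1}{n} \sum_{i=1}^n R_i \otimes [U^i]
\qquad \text{and}\qquad
\phi(\widetilde{\mathcal{R}}) = \tfrac{1}{n} \sum_{i=1}^n R_i \wedge U^i + K_n.
\]

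Finally, I would verify the two Reeb conditions on this representative. Since each $R_i$ lies in $\ker_1 \d\Theta$,
\[
\iota_{\phi(\widetilde{\mathcal{R}})} \d\Theta = \tfrac{1}{n} \sum_i \iota_{U^i} \iota_{R_i} \d\Theta = 0,
\]
which is in fact automatic for any class in the image of $\phi$. For the normalisation, the duality of the frames gives
\[
\iota_{\phi(\widetilde{\mathcal{R}})} \Theta = \tfrac{1}{n} \sum_i \iota_{U^i}(\iota_{R_i} \Theta) = \tfrac{1}{n} \sum_i \delta^i_i = 1,
\]
and uniqueness of the Reeb class yields the claim. The main delicate point I expect is the bookkeeping of well-definedness: the representatives $U^i$ are determined only modulo $(\Ima \flat_\Theta)^{\circ, n-1}$, and one must confirm (as is implicitly done in the construction of $\phi$) that the resulting ambiguity $\ker_1\d\Theta \wedge (\Ima\flat_\Theta)^{\circ, n-1}$ is contained in $K_n$. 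A secondary, but crucial, subtlety is the compatibility of the normalising factor $1/n$ with $\operatorname{rank} \ker_1 \d\Theta = n$, which holds in the multicontact framework under consideration and is what makes the trace-like computation above yield exactly $1$.
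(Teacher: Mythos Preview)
Your proposal is correct and follows essentially the same approach as the paper: choose a frame $R_i$ for $\ker_1\d\Theta$, dualise to classes $u^i$, write $\widetilde{\mathcal{R}} = \tfrac{1}{n}\,R_i\otimes u^i$, and verify the two defining conditions of the Reeb class on $\phi(\widetilde{\mathcal{R}}) = \tfrac{1}{n}\,R_i\wedge u^i + K_n$. Your write-up is in fact more explicit than the paper's (which simply states the two conditions are ``clearly'' satisfied), and your flag that the normalisation relies on $\operatorname{rank}\ker_1\d\Theta = n$ is a point the paper leaves implicit.
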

\begin{proof} Let $R_i$ denote a basis for $\ker_1 \d \Theta$ and define $\alpha_i := \iota_{R_i} \Theta$. Then, $\widetilde{\mathcal{R}} = \frac{1}{n} R_i \otimes u^i$ and we have $\phi(\widetilde{\mathcal{R}}) = \frac{1}{n}R_i \wedge u^i$ which clearly satisfies
$$\iota_{\phi(\widetilde{\mathcal{R}})} \d \Theta = 0\qquad\text{and}\qquad \iota_{\phi(\widetilde{\mathcal{R}})} \Theta = 1\,. $$
\end{proof}
}

We now aim to define contraction by $\widetilde{\mathcal{R}}$ on a suitable family of $n$-forms, which will be later identified as the Hamiltonians, objects that will define the dynamics.

\begin{remark}
\label{remark:well_definedness}
Let \[\widetilde U = U \otimes [V + (\Ima \flat_{\Theta})^{\circ, n+1}] \in  \ker_1 \d \Theta \otimes\left( \bigvee_{n-1}M \Big / (\Ima \flat_\Theta)^{\circ, n - 1} \right)\,.\] For a suitable $a$-form $\alpha$, we want to define the contraction $\iota_{\widetilde U} \alpha$ as $\iota_V\iota_{U} \alpha$. We can guarantee well-definedness when $\alpha$ is such that $\iota_{K} \iota_u \alpha = 0$, for every $K \in (\Ima \flat_\Theta)^{\circ, n-1}$ and every $u \in \ker_1 \d \Theta$. This last condition is equivalent to $\alpha$ satisfying 
$\iota_V \iota_u \alpha \in \Ima \flat_\Theta$, for every $u \in \ker_1 \d \Theta$ and $V \in \bigvee_{a-n}M$, as a simple exercise in multilinear algebra shows.
\end{remark}

\begin{definition}
\label{def:Hamiltonian}
Define the \textbf{Hamiltonian subbundle} $\mathcal{H} \subseteq \bigwedge^n M$ as 
\[\mathcal{H} = \bigg\{h \in \bigwedge^{n}M \ \Big\vert\ \iota_{\T M} h \subseteq \Ima \flat_{\Theta}\bigg\}\,.\]
A \textbf{Hamiltonian} is a section $h \in \Gamma(\mathcal{H})$, that is, an $n$-form $h$ such that $\iota_{\T M} h \subseteq \Ima \flat_\Theta$.
\end{definition}

Now, let $h \in \Gamma(\mathcal{H})$ be a Hamiltonian. In order to define the dynamics, we will need to evaluate $\iota_{\widetilde{\mathcal{R}}} \d h$, where the contraction is obtained with any representative of ${\widetilde{\mathcal{R}}}$ in $\ker_1 \d \Theta \otimes \bigwedge_{n-1} M$. Remark \ref{remark:well_definedness} motivates the following definition:

\begin{definition} A Hamiltonian $h \in \Gamma(\mathcal{H})$ is called a {\bf good Hamiltonian} if $\iota_{\ker_1 \d \Theta} \d h \subseteq \mathcal{H}$.
\end{definition}

For a good Hamiltonian, a fundamental object is the \textbf{dissipation $1$-form} which is defined as \[\sigma_h := {(-1)^{n-1}} n \cdot \iota_{\widetilde{\mathcal{R}}} \d h\,.\]

\begin{remark} Both the sign and the factor of $n$ are introduced to recover the original definition found in \cite{LGMRR_23} (see Lemma \ref{lemma:formula_for_dissipation_form}).
\end{remark}

Then, we can define the \dfn{Hamilton--de Donder--Weyl} equations for a map $\psi\colon X \rightarrow M$, where $X$ is and $n$-dimensional manifold, as
\begin{equation*}
    \psi^{\ast} (\Theta + h) = 0\qquad\text{and}\qquad 
    \psi^{\ast} \iota_{\xi} (\d + \sigma_h \wedge)(\Theta + h) =  0\,,\quad \forall \xi \in \X(M)\,.
\end{equation*}

\begin{remark} Notice that these equations generalize the ones introduced in \cite{LGMRR_23}. Indeed, denoting by $\Theta_h = \Theta + h$ the equations now read as 
\[
\psi^\ast \Theta = 0  \quad \text{and}  \quad \psi^\ast \iota_\xi\overline{{\d}} \Theta_h = 0\,, \quad  \forall \xi \in \X(M)
\]
where $\overline{\d} = \d + \sigma_h\wedge$. A local computation in the case of fiber bundles $\pi \colon Y \rightarrow X$ may be found in Section \ref{section:Dissipative_field_theories}.
\end{remark}

In order to discuss dissipation phenomena (dissipated forms), it will be useful for us to have an explicit formula for the evolution of a conformal Hamiltonian $(n-1)$-form $\alpha \in \Omega^{n-1}_H(M)$ in terms of the Hamiltonian $h$ and the dissipation $1$-form $\sigma_h$. This is given by the following:

\begin{theorem}
\label{thm:Evolution_of_Hamiltonian_forms}
Let $h \in \Gamma(\mathcal{H})$ be a good Hamiltonian, let $\alpha \in \Omega^{n-1}_H(M)$ be a conformal Hamiltonian form and let $\psi\colon X \rightarrow M$ be a solution of the Hamilton--de Donder--Weyl equations defined by $h$. Then
\[
\psi^\ast(\d \alpha) = \psi^\ast \left( - \mathcal{R}(\d \alpha) h - \iota_{X_\alpha} \d h - \sigma_h \wedge \alpha + \sigma_h \wedge \iota_{X_\alpha} h\right)\,.
\]
\end{theorem}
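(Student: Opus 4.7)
My plan is to derive the evolution formula by contracting the second Hamilton--de Donder--Weyl equation with the vector field $X_\alpha$ associated to $\alpha$, simplifying via the conformal Hamiltonian identities, and then using the first equation $\psi^\ast(\Theta+h)=0$ to eliminate $\psi^\ast\Theta$ in favour of $-\psi^\ast h$. Conceptually this is an exercise in interior-product algebra; the only nontrivial input is the identification of the conformal coefficient $V_\alpha$ with $\mathcal{R}(\d\alpha)$ provided by the multicontact structure.

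First I would record the conformal Hamiltonian identities for $\alpha$. Since $\alpha \in \Omega^{n-1}_H(M)$, the associated multivector field is an ordinary vector field $X_\alpha \in \X(M)$ (the case $p = 1$), and Eq.~\eqref{eq:Hamiltonian_form} reads
\[
\iota_{X_\alpha}\Theta = -\alpha, \qquad \iota_{X_\alpha}\d\Theta = \d\alpha + V_\alpha\,\Theta,
\]
for a function $V_\alpha \in \Cinfty(M)$. I would then argue that $X_\alpha$ and $V_\alpha$ are uniquely determined by $\alpha$: any two solutions differ by a vector field $Z$ with $\iota_Z\Theta = 0$ and $\iota_Z\d\Theta = c\,\Theta$ for some $c \in \Cinfty(M)$; contracting the second equality with a nonzero $R \in \ker_1\d\Theta$ gives $0 = c \cdot \iota_R\Theta$, and $\iota_R\Theta \neq 0$ by $\ker_1\Theta \cap \ker_1\d\Theta = \{0\}$, so $c = 0$ and hence $Z \in \ker_1\Theta \cap \ker_1\d\Theta = \{0\}$. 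Comparing with the canonical splitting $X_\alpha = \sharp(\d\alpha) + R_\alpha$, where $R_\alpha \in \ker_1\d\Theta$ is determined by $\iota_{R_\alpha}\Theta = -\alpha$ (the remark preceding Theorem \ref{thm:bracket_formula_sharp}), then identifies $V_\alpha$ with $\mathcal{R}(\d\alpha)$.

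Second, I would contract the $(n+1)$-form
\[
(\d + \sigma_h \wedge)(\Theta + h) = \d\Theta + \d h + \sigma_h \wedge \Theta + \sigma_h \wedge h
\]
with $X_\alpha$, using the identity $\iota_{X_\alpha}(\sigma_h \wedge \eta) = \sigma_h(X_\alpha)\,\eta - \sigma_h \wedge \iota_{X_\alpha}\eta$ for the $1$-form $\sigma_h$ and any form $\eta$. Substituting the conformal Hamiltonian identities, the second Hamilton--de Donder--Weyl equation becomes
\[
\psi^\ast\bigl[\d\alpha + V_\alpha\Theta + \iota_{X_\alpha}\d h + \sigma_h(X_\alpha)(\Theta + h) + \sigma_h \wedge \alpha - \sigma_h \wedge \iota_{X_\alpha}h\bigr] = 0.
\]
The first Hamilton--de Donder--Weyl equation $\psi^\ast(\Theta + h) = 0$ annihilates the scalar piece $\sigma_h(X_\alpha)\,\psi^\ast(\Theta + h)$ and converts $\psi^\ast(V_\alpha\Theta)$ into $-\psi^\ast(V_\alpha h)$; isolating $\psi^\ast(\d\alpha)$ and substituting $V_\alpha = \mathcal{R}(\d\alpha)$ then yields the claimed identity.

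The principal subtlety I anticipate is the careful tracking of signs from the substitution $\psi^\ast\Theta = -\psi^\ast h$ and from the identification $V_\alpha = \mathcal{R}(\d\alpha)$, which itself relies on the canonical splitting and hence on the multicontact hypothesis. Beyond these points and the elementary interior-product manipulations, no deeper conceptual obstruction is expected.
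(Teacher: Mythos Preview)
Your proposal is correct and follows exactly the paper's route: set $\xi = X_\alpha$ in the second Hamilton--de Donder--Weyl equation, expand $\iota_{X_\alpha}(\d+\sigma_h\wedge)(\Theta+h)$ using $\iota_{X_\alpha}\Theta=-\alpha$ and $\iota_{X_\alpha}\d\Theta=\d\alpha+V_\alpha\Theta$, then invoke $\psi^\ast(\Theta+h)=0$ to trade $\psi^\ast\Theta$ for $-\psi^\ast h$. The one slip is the sign in your identification $V_\alpha=\mathcal{R}(\d\alpha)$: the paper's proof (consistently with the explicit formula $\mathcal{R}(\iota_X\d\Theta+\gamma\Theta)=\gamma$ after Definition~\ref{def:sharp-reeb} and the remark following Theorem~\ref{thm:bracket_formula_sharp}) takes $\iota_{X_\alpha}\d\Theta=\d\alpha-\mathcal{R}(\d\alpha)\Theta$, i.e.\ $V_\alpha=-\mathcal{R}(\d\alpha)$; with your sign the final formula would read $+\mathcal{R}(\d\alpha)h$ rather than $-\mathcal{R}(\d\alpha)h$. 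This is precisely the sign-tracking subtlety you flagged, so no structural change is needed.
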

\begin{proof} Indeed, we know that $\psi^{\ast} \iota_{\xi} (\d + \sigma_h \wedge)(\Theta + h) =  0$, for every $\xi \in \X(M)$. Let $\xi = X_\alpha$, where $X_\alpha$ is a vector field satisfying 
\[
\iota_{X_\alpha} \Theta = - \alpha\qquad\text{and}\qquad  \iota_{X_\alpha} \d \Theta = \d \alpha - \mathcal{R}(\d \alpha) \Theta\,.
\]
Then, we obtain
\begin{align*}
    \psi^\ast\left( \d \alpha - \mathcal{R}(\d \alpha) \Theta + \iota_{X_\alpha} \d h +\sigma_h(X_\alpha) \Theta + \sigma_h \wedge \alpha + \sigma_h (X_\alpha) h - \sigma_h \wedge \iota_{X_\alpha} h \right) = 0
\end{align*}
which, using that $\psi^\ast \Theta = - \psi^\ast h$, implies
\[
\psi^\ast(\d \alpha + \mathcal{R}(\d \alpha) h + \iota_{X_\alpha} \d h + \sigma_h \wedge \alpha - \sigma_h \wedge \iota_{X_\alpha} h) = 0\,,
\]
finishing the proof.
\end{proof}

\begin{corollary} 
\label{cor:Evolutio_of_vertical_forms}
Under the hypotheses of Theorem \ref{thm:Evolution_of_Hamiltonian_forms}, if $X_\alpha$ takes values in $(\Ima \flat_\Theta)^{\circ, 1}$, we have 
\[
\psi^\ast(\d \alpha) = \psi^\ast \left( - \mathcal{R}(\d \alpha) h - \iota_{X_\alpha} \d h - \sigma_h \wedge \alpha \right)\,.
\]
\end{corollary}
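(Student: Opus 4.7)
The plan is to observe that the difference between the formula in Theorem \ref{thm:Evolution_of_Hamiltonian_forms} and the desired one is exactly the term $\sigma_h \wedge \iota_{X_\alpha} h$, and to show that under the assumed hypothesis this term vanishes identically. Once that is established, the result is an immediate substitution.

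The key lemma I would establish is: if $X_\alpha$ takes values in $(\Ima \flat_\Theta)^{\circ, 1}$ and $h \in \Gamma(\mathcal{H})$, then $\iota_{X_\alpha} h = 0$ as an $(n-1)$-form on $M$. To prove this, I would contract $\iota_{X_\alpha} h$ with an arbitrary vector field $v \in \mathfrak{X}(M)$ and use graded commutativity of interior products:
\[
\iota_v (\iota_{X_\alpha} h) \;=\; -\,\iota_{X_\alpha} (\iota_v h).
\]
By definition of the Hamiltonian subbundle, $\iota_v h \in \Ima \flat_\Theta$, while by hypothesis $X_\alpha \in (\Ima \flat_\Theta)^{\circ, 1}$, which exactly means $\iota_{X_\alpha}\gamma = 0$ for every $\gamma \in \Ima \flat_\Theta$. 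Consequently $\iota_v(\iota_{X_\alpha} h) = 0$ for every $v$. Since $\iota_{X_\alpha} h$ has degree $n-1$ and the standing assumption of the subsection is $n > 1$, a form of positive degree which annihilates every vector must itself vanish, so $\iota_{X_\alpha} h = 0$.

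Once $\iota_{X_\alpha} h = 0$ is in hand, the term $\sigma_h \wedge \iota_{X_\alpha} h$ in the formula
\[
\psi^\ast(\d\alpha) \;=\; \psi^\ast\!\left(-\mathcal{R}(\d\alpha)\,h - \iota_{X_\alpha}\d h - \sigma_h \wedge \alpha + \sigma_h \wedge \iota_{X_\alpha} h\right)
\]
provided by Theorem \ref{thm:Evolution_of_Hamiltonian_forms} is zero, and the stated identity follows. I do not foresee a serious obstacle: the only delicate point is justifying the implication ``$\iota_v \omega = 0$ for all $v$ implies $\omega = 0$'', which is where $n > 1$ is used, and this hypothesis is explicitly available.
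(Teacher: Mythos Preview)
Your proof is correct and follows the same route as the paper: both argue that $\iota_{X_\alpha} h = 0$ and then drop the last term in Theorem~\ref{thm:Evolution_of_Hamiltonian_forms}. The paper simply asserts this vanishing by invoking the equivalent description of $\mathcal{H}$ as the $n$-forms annihilated by $(\Ima \flat_\Theta)^{\circ,1}$, whereas you explicitly verify that equivalence via the contraction argument with an arbitrary $v$ and the hypothesis $n>1$.
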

\begin{proof} Clearly, since $h$ takes values in $\mathcal{H}$, which is defined as the subbundle of forms that vanish when contracted by an element of $(\Ima \flat_\Theta)^{\circ, 1}$, so that $\iota_{X_\alpha} h = 0$.
\end{proof}

A natural question to ask is what are the conditions that guarantee that every possible choice of Hamiltonian is a good Hamiltonian. Consider $h \in \Gamma(\mathcal{H})$, $R \in \ker_1\d \Theta$ and $v \in \T M$. Abusing of notation, let us denote by $R$ and $v$ extensions to global vector fields. Then, $\iota_R \d h $ takes values in $\mathcal{H}$ if and only if $\iota_v \iota_R \d h$ takes values in $\Ima \flat_\Theta$. We have the following
\begin{align*}
\iota_v \iota_R \d h = \iota_v \Lie_R h - \iota_v \d \iota_R h = \Lie_R\iota_vh - \iota_{[R, v]} h - \iota_v \d \iota_R h\,.
\end{align*}
Clearly, by definition, $\iota_{[R,v]} h$ takes values in $\Ima \flat_\Theta$. This implies that $\Lie_R\iota_vh- \iota_v \d \iota_R h$ measures the extent to which $h$ fails to be a good Hamiltonian. Define
\[
\begin{array}{rccc}
\gamma_\Theta\colon & \Gamma(\mathcal{H}) \times \Gamma(\ker_1 \d \Theta) \times \X(M) & \longrightarrow & \Omega^{n-1}(M)\big / \Gamma(\Ima \flat_\Theta)\\
& (h , R , v) & \longmapsto & \Lie_R\iota_vh- \iota_v \d \iota_R h\,.
\end{array}
\]
This map is $\Cinfty(M)$-linear in both $R$ and $v$ but, unfortunately, fails to be $\Cinfty(M)$-linear in $h$, in general. Therefore, it induces a map (which, abusing of notation, is denoted by $\gamma_\Theta$ as well)
\[\gamma_\Theta \colon \Gamma(\mathcal{H}) \times \left( \ker_1 \d \Theta \otimes \T M\right) \longrightarrow \bigwedge^{n-1}M \Big/ \Ima \flat_\Theta\,,\]
so that we have the following.
\begin{corollary} Let $(M, \Theta)$ be a multicontact manifold. Then, every $h \in \Gamma(\mathcal{H})$ is a good Hamiltonian if and only if $\gamma_\Theta$ vanishes identically.
\end{corollary}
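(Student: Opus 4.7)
The plan is to deduce the corollary directly from the computation performed immediately before the statement, together with Proposition \ref{prop:Contraction_by_Reeb}. No additional geometric input is needed; the argument is essentially bookkeeping in the quotient $\bigwedge^{n-1} M / \Ima \flat_\Theta$.

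First, I would revisit the identity established just above the statement,
\[\iota_v \iota_R \d h = \Lie_R \iota_v h - \iota_{[R,v]} h - \iota_v \d \iota_R h = \gamma_\Theta(h, R, v) - \iota_{[R,v]} h.\]
Since $h \in \Gamma(\mathcal{H})$ by hypothesis, the definition of $\mathcal{H}$ forces $\iota_{[R,v]} h$ to lie in $\Gamma(\Ima \flat_\Theta)$, so this term drops out in the quotient. This yields the key reformulation
\[\iota_v \iota_R \d h \equiv \gamma_\Theta(h, R, v) \pmod{\Ima \flat_\Theta}.\]

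Second, I would unpack the notion of a good Hamiltonian via Proposition \ref{prop:Contraction_by_Reeb}: by definition, $h$ is good iff $\iota_R \d h \in \Gamma(\mathcal{H})$ for every $R \in \Gamma(\ker_1 \d \Theta)$, which, by the very definition of $\mathcal{H}$, is equivalent to $\iota_v \iota_R \d h \in \Gamma(\Ima \flat_\Theta)$ for every $v \in \X(M)$. Combining this with the first step, a fixed $h$ is good iff $\gamma_\Theta(h, R, v) = 0$ in $\bigwedge^{n-1} M / \Ima \flat_\Theta$ for all such $R$ and $v$.

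Third, because $\gamma_\Theta$ was shown to be $\Cinfty(M)$-linear in the pair $(R, v)$, its vanishing on all pairs is the same as its vanishing on all sections of $\ker_1 \d \Theta \otimes \T M$, which justifies passing to the factored map on the tensor product. The corollary then follows by quantifying over $h \in \Gamma(\mathcal{H})$: the condition \emph{every $h$ is a good Hamiltonian} translates precisely into $\gamma_\Theta(h, \cdot) = 0$ for every $h$, i.e.\ $\gamma_\Theta \equiv 0$. The only mildly delicate point, which I would flag but do not expect to be a real obstacle, is that $\gamma_\Theta$ is not $\Cinfty(M)$-linear in $h$, so the identical vanishing must be read as vanishing as a map on sections of $\mathcal{H}$, rather than fiberwise in the $h$-slot.
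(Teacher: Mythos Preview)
Your proposal is correct and is precisely the argument the paper intends: the corollary is stated immediately after the computation showing that $\iota_v\iota_R\d h$ equals $\gamma_\Theta(h,R,v)$ modulo $\Ima\flat_\Theta$, and the paper offers no further proof beyond ``so that we have the following.'' Your write-up simply makes explicit the bookkeeping the paper leaves to the reader, including the observation that non-$\Cinfty(M)$-linearity in $h$ forces the vanishing to be read at the level of sections.
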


Although $\gamma_\Theta$ can be difficult to compute in complete generality, if we restrict the study to a particular subclass of multicontact manifolds, $\gamma_\Theta$ induces a tensor which is much easier to manage.

\begin{definition} A multicontact manifold $(M, \Theta)$ is called \textbf{variational} if
$$ \iota_{\bigwedge^2 (\ker_1 \d \Theta)} \Theta = 0\,. $$
\end{definition}
\begin{proposition}
\label{prop:Reeb_contracted_with_Hamiltonian}
Let $(M, \Theta)$ be a variational multicontact manifold and $h \in \Gamma(\mathcal{H)}$. Then, $\iota_{\ker_1 \d \Theta} h = 0$. 
\end{proposition}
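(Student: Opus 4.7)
The plan is to unpack the definition of $\mathcal{H}$ and use the variational hypothesis to kill all contractions of $\iota_R h$. Fix $R \in \Gamma(\ker_1 \d \Theta)$ and work pointwise. Since $n > 1$ by the standing assumption of this section, the $(n-1)$-form $\iota_R h$ vanishes if and only if $\iota_v \iota_R h = 0$ for every $v \in \T M$, so it suffices to check the latter.

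First I unpack what $h \in \Gamma(\mathcal{H})$ gives: for every $v \in \T M$, one has $\iota_v h \in \Ima \flat_\Theta$, so there exists a (unique, by injectivity of $\flat_\Theta$) element $R_v \in \ker_1 \d \Theta$ with
\[
\iota_v h = \iota_{R_v}\Theta\,.
\]
Second, I compute, using the anticommutativity of interior products of vectors and the identity $\iota_X \iota_Y = \iota_{Y \wedge X}$ applied to vectors (with the sign convention of \cite{Mar_97}):
\[
\iota_v \iota_R h \;=\; -\iota_R \iota_v h \;=\; -\iota_R \iota_{R_v}\Theta \;=\; -\iota_{R_v \wedge R}\Theta\,.
\]
Third, I invoke the variational hypothesis: since $R_v, R \in \ker_1 \d \Theta$, the bivector $R_v \wedge R$ lies in $\bigwedge^2(\ker_1 \d \Theta)$, and so $\iota_{R_v \wedge R}\Theta = 0$. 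Hence $\iota_v \iota_R h = 0$ for all $v$, which yields $\iota_R h = 0$ since $n - 1 \geq 1$.

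There is no serious obstacle here; the only thing to watch is the sign/ordering convention for interior products of decomposable multivectors, but as the argument concludes with a vanishing statement the conclusion is insensitive to that choice. The substantive content is that membership in $\mathcal{H}$ forces $\iota_v h$ to sit in the image of $\flat_\Theta$, which together with the variational condition propagates the vanishing to $\iota_R h$ itself.
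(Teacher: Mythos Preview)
Your proof is correct and follows essentially the same route as the paper: pick $R\in\ker_1\d\Theta$ and an arbitrary $v\in\T M$, use $h\in\Gamma(\mathcal{H})$ to write $\iota_v h=\iota_{R_v}\Theta$ with $R_v\in\ker_1\d\Theta$, and then apply the variational condition to conclude $\iota_v\iota_R h=-\iota_R\iota_{R_v}\Theta=0$. The only extra remark you add is the explicit observation that $n>1$ makes the vanishing of all contractions equivalent to the vanishing of $\iota_R h$, which the paper leaves implicit.
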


\begin{proof} Indeed, let $v \in \T M$ and $R \in \ker_1 \d \Theta$. Then, 
\[
\iota_v \iota_R h = - \iota_R \iota_v h = - \iota_R \iota_{R'} \Theta\,,
\]
for certain $R' \in \ker_1{\d \Theta}$. Now, since $\Theta$ is variational, $\iota_{R \wedge R'} \Theta = 0$, which finishes the proof.
\end{proof}
Proposition \ref{prop:Reeb_contracted_with_Hamiltonian} implies that, for variational multicontact manifolds, we may write \[
\gamma_\Theta (h, R, v) = \Lie_R \iota_v h = \iota_R \d \iota_v h\,
\]
and, now, it is $\Cinfty(M)$-linear on every component. Indeed, it suffices to observe that 
\begin{align*}
    \gamma_\Theta(f h, R, v) = \gamma_\Theta(h, R, fv) &=f\gamma_\Theta(h, R, v) + R(f) \iota_v h \quad\operatorname{mod} \Ima \flat_\Theta\\
    &= f \gamma_\Theta(h, R, v)\operatorname{mod} \Ima \flat_\Theta\,,
\end{align*}
where in the last equality we have used $\iota_v h \in \Ima \flat_\Theta$ (see Definition \ref{def:Hamiltonian}). Hence, it induces a vector bundle map
\[\gamma_\Theta \colon \mathcal{H} \otimes \ker_1 \d \Theta \otimes \T M \longrightarrow \bigwedge^{n-1} M \Big / \Ima \flat_\Theta\,.\] 

However, we can obtain a much simpler tensor that also measures the extent to which Hamiltonians fail to be good Hamiltonians. Indeed, notice that $\iota_v h = \iota_{R'} \Theta$, for some $R' \in \ker_1 \d \Theta$ and we may write $\gamma_\Theta(h, R, v) = \iota_R \d \iota_{R'} \Theta$. This induces the following definition
\begin{definition} Let $(M, \Theta)$ be a variational multicontact manifold. Then, its {\bf distortion} is defined as the following map
\[
\begin{array}{rccc}
C_\Theta \colon & \Gamma(\ker_1 \d \Theta) \otimes \Gamma(\ker_1 \d\Theta) & \longrightarrow & \Omega^{n-1}(M) / \Gamma(\Ima \flat_\Theta)\\
& R\otimes R' & \longmapsto & \iota_{R}\d \iota_{R'} \Theta\,.
\end{array}
\]
\end{definition}

\begin{proposition}For a variational multicontact manifold, $C_\Theta$ defines a symmetric vector bundle map 
\[C_\Theta\colon \ker_1 \d \Theta \otimes \ker_1 \d \Theta \longrightarrow{\bigwedge^{n-1} M\big / \Ima \flat_\Theta}\,.\]
\end{proposition}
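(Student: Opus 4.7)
The statement reduces to two checks: that the formula $C_\Theta(R,R') = \iota_R\d\iota_{R'}\Theta$ is $\Cinfty(M)$-bilinear modulo $\Gamma(\Ima \flat_\Theta)$, so that it descends to a genuine bundle map, and that the resulting map is symmetric. The variational condition $\iota_{\bigwedge^2(\ker_1\d\Theta)}\Theta = 0$ is what makes both points work.

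Tensoriality in the first slot is automatic, since $\iota_{fR} = f\iota_R$ gives $C_\Theta(fR,R') = f\,C_\Theta(R,R')$ on the nose. In the second slot, expanding by Leibniz one obtains
\[\iota_R \d\iota_{fR'}\Theta = R(f)\,\iota_{R'}\Theta - \d f\wedge \iota_R\iota_{R'}\Theta + f\,\iota_R\d\iota_{R'}\Theta.\]
The variational assumption forces $\iota_R\iota_{R'}\Theta = 0$, killing the middle term, while the first term is already a section of $\Ima \flat_\Theta$ because $R'\in\Gamma(\ker_1\d\Theta)$. Hence $C_\Theta(R,fR') \equiv f\,C_\Theta(R,R')$ modulo $\Gamma(\Ima\flat_\Theta)$, and $C_\Theta$ descends to the claimed vector bundle morphism.

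For symmetry, the approach is to use Cartan's magic formula $\iota_R\d = \Lie_R - \d\iota_R$. Since $\iota_R\iota_{R'}\Theta = 0$ by the variational condition and $\iota_R\d\Theta = \iota_{R'}\d\Theta = 0$ as both vectors lie in $\ker_1\d\Theta$, the standard manipulation
\[\iota_R\d\iota_{R'}\Theta = \Lie_R\iota_{R'}\Theta = \iota_{R'}\Lie_R\Theta + \iota_{[R,R']}\Theta = \iota_{R'}\d\iota_R\Theta + \iota_{[R,R']}\Theta\]
reduces the difference $C_\Theta(R,R') - C_\Theta(R',R)$ to $\iota_{[R,R']}\Theta$. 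The only residual point, and really the main thing to verify, is that this last term sits in $\Ima\flat_\Theta$, which amounts to showing that $\ker_1\d\Theta$ is involutive. This follows from the identity $\iota_{[R,R']}\d\Theta = \Lie_R\iota_{R'}\d\Theta - \iota_{R'}\Lie_R\d\Theta$ together with $\Lie_R\d\Theta = \d\iota_R\d\Theta = 0$, so $[R,R']\in\Gamma(\ker_1\d\Theta)$ and $\iota_{[R,R']}\Theta$ is by definition a section of $\Ima\flat_\Theta$, completing the argument.
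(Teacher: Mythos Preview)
Your proof is correct and follows essentially the same route as the paper: both establish symmetry via the Cartan/commutator identity and reduce the defect $C_\Theta(R,R')-C_\Theta(R',R)$ to $\iota_{[R,R']}\Theta$, then invoke involutivity of $\ker_1\d\Theta$. The only cosmetic difference is that the paper checks $\Cinfty(M)$-linearity in the first slot only and then appeals to symmetry to get the second, whereas you verify the second slot directly (and you spell out the involutivity argument that the paper merely asserts).
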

\begin{proof} It is clearly $\Cinfty(M)$-linear on its first component. Therefore, it suffices to show that it is symmetric to conclude the result. Indeed,
\begin{align*}
    \iota_R \d \iota_{R'} \Theta &= \iota_{R}\Lie_{R'} \Theta - \iota_{R} \iota_{R'} \d \Theta =\iota_{R}\Lie_{R'} \Theta\\
    &= \Lie_{R'} \iota_{R} \Theta - \iota_{[R', R]} \Theta\\
    &= \iota_{R'} \d \iota_{R} \Theta  - \iota_{[R', R]} \Theta\,.
\end{align*}
Notice that $[R', R]$ takes values in $\ker_1 \d \Theta$, and so $\iota_{[R', R]} \Theta$ takes values in $\Ima \flat_\Theta$, concluding that $C_\Theta$ is symmetric and finishing the proof.
\end{proof}
Concluding, we have:
\begin{theorem}
\label{thm:distortion}
Let $(M, \Theta)$ be a  variational multicontact manifold such that $\ker_1 \d \Theta$ has constant rank. Then, every Hamiltonian is a good Hamiltonian if  and only if its distortion vanishes.
\end{theorem}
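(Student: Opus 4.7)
The plan is to exploit the tensorial identity $\gamma_\Theta(h,R,v) \equiv C_\Theta(R, R') \pmod{\Gamma(\Ima \flat_\Theta)}$ that was essentially established in the discussion preceding the definition of $C_\Theta$. Here, for any Hamiltonian $h \in \Gamma(\mathcal{H})$ and vector field $v \in \X(M)$, $R' \in \Gamma(\ker_1 \d\Theta)$ is the unique section with $\iota_v h = \iota_{R'}\Theta$: existence follows from $h$ taking values in $\mathcal{H}$, and uniqueness follows from the injectivity of $\flat_\Theta$ restricted to $\ker_1\d\Theta$, which is in turn a consequence of $\ker_1\Theta \cap \ker_1 \d\Theta = \{0\}$.

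Granting this identity, the direction $(\Leftarrow)$ is immediate: if $C_\Theta \equiv 0$, then for every $h \in \Gamma(\mathcal{H})$, $R \in \Gamma(\ker_1\d\Theta)$ and $v \in \X(M)$,
\[
\gamma_\Theta(h,R,v) \equiv C_\Theta(R,R') \equiv 0 \pmod{\Gamma(\Ima\flat_\Theta)},
\]
so $\iota_R\d h$ takes values in $\mathcal{H}$, i.e., $h$ is good.

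For $(\Rightarrow)$, suppose every Hamiltonian is good. Since $C_\Theta$ is $\Cinfty(M)$-bilinear, the inclusion $C_\Theta(R_1,R_2) \in \Gamma(\Ima\flat_\Theta)$ is a pointwise condition on $M$. Fixing $p \in M$ and sections $R_1,R_2 \in \Gamma(\ker_1\d\Theta)$ with the prescribed values at $p$, the problem reduces to exhibiting, on a neighborhood $U$ of $p$, a Hamiltonian $h$ and a vector field $v$ satisfying $\iota_v h|_p = \iota_{R_2}\Theta|_p$. After extending $h$ globally by multiplication with a bump function (without affecting the germ at $p$), the hypothesis provides $\gamma_\Theta(h,R_1,v)|_p \in \Ima\flat_\Theta|_p$, and by the tensorial identification above this value coincides with $C_\Theta(R_1,R_2)|_p$ modulo $\Ima\flat_\Theta|_p$, yielding the desired conclusion.

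The main obstacle is the explicit local construction of such a pair $(h,v)$. My approach would be to pick a splitting $\T M|_U = \ker_1\d\Theta|_U \oplus W$ near $p$, select $v \in W$ with $v|_p \neq 0$ and a $1$-form $\alpha$ satisfying $\alpha|_p(v) = 1$ while $\alpha|_p$ annihilates $\ker_1\d\Theta|_p$, and try $h := \alpha \wedge \iota_{\widetilde R_2}\Theta$ for a local extension $\widetilde R_2$ of $R_2|_p$. A short computation gives $\iota_v h|_p = \iota_{R_2}\Theta|_p$, and variationality ($\iota_{R\wedge R'}\Theta = 0$ for $R,R' \in \ker_1\d\Theta$) gives $\iota_R h|_p = 0$ for $R \in \ker_1\d\Theta|_p$. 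The delicate verification is that $\iota_w h \in \Gamma(\Ima\flat_\Theta)$ for all $w \in \X(M)$; this may require either a finer choice of $\alpha$, exploiting that in the variational setting $\Ima\flat_\Theta$ is a subbundle of $\bigwedge^{n-1} W^*$, or the addition of a correction term supported in $\bigwedge^n W^*$ that does not modify $\iota_v h|_p$. This fibrewise linear-algebra verification is the only nontrivial step in the argument.
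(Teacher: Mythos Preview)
Your approach is essentially the one the paper takes. In fact, the paper gives no explicit proof of this theorem at all: it simply writes ``Concluding, we have:'' after establishing the identity $\gamma_\Theta(h,R,v)=\iota_R\,\d\iota_{R'}\Theta=C_\Theta(R,R')$ in the variational case (where $R'$ is determined by $\iota_v h=\iota_{R'}\Theta$), together with the earlier corollary that every Hamiltonian is good if and only if $\gamma_\Theta\equiv 0$. Your $(\Leftarrow)$ direction is precisely this.

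Where you go beyond the paper is in noting that the $(\Rightarrow)$ direction requires the map $(h,v)\mapsto R'$ to hit every element of $\ker_1\d\Theta$ pointwise, and in attempting an explicit local construction of such a pair. The paper does not address this surjectivity at all; it is tacitly assumed. Your observation that the candidate $h=\alpha\wedge\iota_{R_2}\Theta$ may fail to lie in $\mathcal H$ (because $\iota_w h$ acquires the term $-\alpha\wedge\iota_{w\wedge R_2}\Theta$, which need not land in $\Ima\flat_\Theta$) is accurate, and your suggestion that a correction in $\bigwedge^n W^\ast$ or a sharper choice of $\alpha$ may be needed is reasonable. So the only incomplete step in your argument is exactly the one the paper leaves implicit; you have not introduced any gap that is not already present in the source.
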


\subsection{Dissipated forms}

In contact dynamics, a fundamental concept is that of \textit{dissipated quantity} \cite{GGMRR_20a}, which is a function $g \in \Cinfty(M)$ satisfying $\dot{g} = X_H(g) = - R(H) g$. Recently, in \cite{symm}, the concept was generalized to the multicontact formulation of field theories.

\begin{definition}
\label{def:dissipated_form}
Let $(M, \Theta)$ be a multicontact manifold and let $h \in \Gamma(\mathcal{H})$ be a good Hamiltonian. Then, a form $\alpha \in \Omega^{a}_H(M)$ is said to be \textbf{dissipated}, if 
\[\psi^\ast( \d \alpha) = - \sigma_h \wedge \alpha\,,\]
for every $\psi$ solution of the multicontact Hamilton--de Donder--Weyl equations determined by $h$.
\end{definition}

\begin{remark} Notice that Definition \ref{def:dissipated_form} recovers the usual notion of dissipated quantity of contact mechanics (see \cite{LL_19}). Indeed, in the context of contact mechanics, the dissipation $1$-form reads as $\pdv{H}{z}\d t$ (see \cite{LGMRR_23, LGMRR_25}) so that the condition on a function $f \in \Cinfty(M)$ to be dissipated is for a curve $\gamma \colon \mathbb{R} \rightarrow M$ to satisfy
\[
\psi^\ast (\d f) = -\pdv{H}{z} f \d t\,,
\]
or equivalently, $\dot{f} = -\pdv{H}{z} f$.
\end{remark}

\begin{remark}
\label{remark:Comparison_dissipated_forms}
The definition of dissipated form that can be found in \cite{symm} is not quite the same as Definition \ref{def:dissipated_form}. Indeed, in the cited article, a differential form $\alpha \in \Omega^a(M)$ is called dissipated if $\psi^\ast \left(  \overline{\d} \alpha \right)= 0$, for every map $\psi\colon X \rightarrow M$ satisfying $\psi^\ast \Theta = 0$ and $\psi^\ast \iota_\xi \overline{\d} \Theta = 0$, where $\xi \in \mathfrak{X}(M)$ is an arbitrary vector field and $\overline{\d} \alpha = \d \alpha+\sigma\wedge \alpha $, $\sigma$ denoting a particular $1$-form, which only depends on the multicontact form $\Theta$. This last form is also called the dissipation form, but does not completely correspond to the notion introduced in this text (see \cite{LGMRR_23, LGMRR_25}).

In our setting, the dissipation form depends upon a choice of Hamiltonian, $h$, and this choice induces an operator $\overline{\d}_h \alpha = \d \alpha + \sigma_h\wedge \alpha$, which allows us to write the condition for a form to be dissipated as $\psi^\ast \left( \overline{\d}_h \alpha \right) = 0$, for every solution of Hamilton--de Donder--Weyl equations $\psi$ or, equivalently, for any smooth map $\psi\colon X \rightarrow M$ satisfying $\psi^\ast \left(\Theta +h \right) = 0$ and $\psi^\ast \iota_\xi\overline{\d}_h \left( \Theta + h\right) = 0$. Hence, we recover the definition presented in \cite{symm} if we work with the form $\Theta_h = \Theta + h$, instead of interpreting $\Theta$ to fix geometry and $h$ to define the dynamics. It can then be shown that $\sigma_h$ is the dissipation form associated to $\Theta_h$ in the sense of \cite{LGMRR_23, LGMRR_25} (see Lemma \ref{lemma:formula_for_dissipation_form}). 
\end{remark}

In the case of Hamiltonian forms, we can obtain a condition to identify dissipated forms, which will be useful in the sequel to find a subalgebra of dissipated forms. Indeed, using Theorem \ref{thm:Evolution_of_Hamiltonian_forms} we have the following

\begin{proposition}
\label{prop:Dissipative_condition} Let $\alpha \in \Omega^{n-1}_H(M)$. Then, if 
\[- \left( \Lie_{X_\alpha} + \mathcal{R}(\d \alpha)\right) h + \left( \d + \sigma_h \wedge\right)\iota_{X_\alpha} h = 0\,,\]
then $\alpha$ is a dissipated $(n-1)$-form.
\end{proposition}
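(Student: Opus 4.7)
The plan is to derive the claim directly from the evolution formula in Theorem \ref{thm:Evolution_of_Hamiltonian_forms}. Since $\alpha \in \Omega^{n-1}_H(M)$ corresponds to $p = 1$, the multivector field $X_\alpha$ is an ordinary vector field, and so the usual Cartan magic formula $\Lie_{X_\alpha} h = \d \iota_{X_\alpha} h + \iota_{X_\alpha} \d h$ applies without any sign subtleties.

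First I would rewrite the hypothesis by expanding the Lie derivative:
\begin{equation*}
0 = -(\Lie_{X_\alpha} + \mathcal{R}(\d\alpha))\,h + (\d + \sigma_h\wedge)\iota_{X_\alpha} h = -\d\iota_{X_\alpha} h - \iota_{X_\alpha}\d h - \mathcal{R}(\d\alpha)\,h + \d\iota_{X_\alpha} h + \sigma_h\wedge \iota_{X_\alpha} h,
\end{equation*}
so the two $\d\iota_{X_\alpha} h$ terms cancel, leaving the key identity
\begin{equation*}
-\mathcal{R}(\d\alpha)\,h - \iota_{X_\alpha}\d h + \sigma_h\wedge \iota_{X_\alpha} h = 0.
\end{equation*}

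Next, I would apply $\psi^\ast$ to this relation and compare with Theorem \ref{thm:Evolution_of_Hamiltonian_forms}, which gives
\begin{equation*}
\psi^\ast(\d\alpha) = \psi^\ast\bigl(-\mathcal{R}(\d\alpha)\,h - \iota_{X_\alpha}\d h - \sigma_h\wedge\alpha + \sigma_h\wedge \iota_{X_\alpha} h\bigr).
\end{equation*}
Substituting the vanishing expression eliminates the first, second and fourth terms on the right, leaving $\psi^\ast(\d\alpha) = -\psi^\ast(\sigma_h\wedge \alpha)$, which is exactly the definition of $\alpha$ being dissipated (Definition \ref{def:dissipated_form}).

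Since every step is an algebraic manipulation of known identities, I do not foresee any real obstacle; the only point worth checking is that the cancellation of $\d \iota_{X_\alpha} h$ relies on $X_\alpha$ being a genuine vector field (i.e.\ $p=1$), which is automatic since $\alpha$ has degree $n-1$. No degeneracy assumption on $h$ is required beyond it being a good Hamiltonian, which is implicitly needed so that $\sigma_h$ is defined.
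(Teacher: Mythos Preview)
Your proof is correct and follows exactly the approach the paper intends: the proposition is stated immediately after the sentence ``Indeed, using Theorem \ref{thm:Evolution_of_Hamiltonian_forms} we have the following'' and no separate proof is given. Your expansion via Cartan's formula and substitution into the evolution identity is precisely the intended derivation (the same rewriting also appears explicitly in the introductory paragraph of Section~\ref{section:Field_equations}).
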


\begin{remark}Notice that if $X_\alpha$ takes values in $(\Ima \flat_\Theta)^{\circ, 1}$, then $\iota_{X_\alpha} h = 0$ and therefore, the equation in Proposition \ref{prop:Dissipative_condition} simplifies to $- \left( \Lie_{X_\alpha} + \mathcal{R}(\d \alpha)\right) h = 0$, which in the case of mechanics simplifies to $ - (X_f(H) + \pdv{f}{z} H) = \{f, H\} = 0$.
\end{remark}

A natural question to ask is if dissipated forms are closed under Jacobi bracket. As we know the vector field associated to $\{\alpha, \beta\}$ is $[X_\alpha, X_\beta]$. Also, a quick computation shows $\mathcal{R}(\d\{\alpha, \beta\}) = X_\alpha(\mathcal{R}(\d \beta)) - X_\beta(\mathcal{R}(\d \alpha))$. Therefore, after a simple, but rather lengthy computation, we get
\begin{align*}
   &- \left(\Lie_{[X_\alpha, X_\beta]} + \mathcal{R}(\d \{\alpha, \beta\})\right) h + (\d +\sigma_h \wedge) \iota_{[X_\alpha, X_\beta]} h =\\
   &\qquad\qquad\qquad\qquad\qquad\qquad\qquad = \left(\Lie_{X_\alpha} + \mathcal{R}(\d \alpha) \right) \left( - \left( \Lie_{X_\beta} + \mathcal{R}(\d \beta)\right) h + \left( \d + \sigma_h \wedge\right)\iota_{X_\beta} h\right)\\
   &\qquad\qquad\qquad\qquad\qquad\qquad\qquad\qquad - (\Lie_{X_\beta} + \mathcal{R}(\d \beta) )\left(- \left( \Lie_{X_\alpha} + \mathcal{R}(\d \alpha)\right) h + \left( \d + \sigma_h \wedge\right)\iota_{X_\alpha} h  \right)\\
   &\qquad\qquad\qquad\qquad\qquad\qquad\qquad\qquad + (\d + \sigma_h) \Lie_{X_\alpha \wedge X_\beta} h + \Lie_{X_\beta} \sigma_h \wedge \iota_{X_\alpha} h - \Lie_{X_\alpha} \sigma_h \wedge \iota_{X_\beta} h \\
   &\qquad\qquad\qquad\qquad\qquad\qquad\qquad\qquad + \mathcal{R}(\d \beta) (\d + \sigma_h) \iota_{X_\alpha} h - \mathcal{R}(\d \alpha)(\d + \sigma_h) \iota_{X_\beta} h\,.
\end{align*}
As a corollary, we obtain the following:
\begin{corollary} Let $\alpha$ and $\beta$ be conformal Hamiltonian $(n-1)$-forms such that 
\[- \left( \Lie_{X_\alpha} + \mathcal{R}(\d \alpha)\right) h = 0 \qquad\text{and}\qquad  -\left( \Lie_{X_\beta} + \mathcal{R}(\d \beta)\right) h = 0\,.\]
If both $X_\alpha$ and $X_\beta$ take values in $(\Ima \flat_\Theta)^{\circ, 1}$, then $\{\alpha, \beta\}$ is a dissipated form.
\end{corollary}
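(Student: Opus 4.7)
The plan is to apply Proposition~\ref{prop:Dissipative_condition} to $\{\alpha,\beta\}$, whose associated infinitesimal conformal transformation is $[X_\alpha,X_\beta]$, and to verify that the right-hand side of the long identity displayed immediately before the corollary vanishes under our hypotheses. This reduces the whole argument to showing that each of the six summands on that right-hand side is zero.

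The key preliminary observation is that $X_\alpha,X_\beta\in(\Ima\flat_\Theta)^{\circ,1}$ together with $h\in\Gamma(\mathcal{H})$ forces $\iota_{X_\alpha}h=\iota_{X_\beta}h=0$. Indeed, for every $w\in\T M$ and $v\in(\Ima\flat_\Theta)^{\circ,1}$, one has $\iota_w\iota_v h=-\iota_v\iota_w h=0$, since $\iota_w h\in\Ima\flat_\Theta$ by the definition of $\mathcal{H}$; an $(n-1)$-form annihilated by every vector contraction must vanish. As an immediate consequence, the four ``mixed'' terms $\Lie_{X_\beta}\sigma_h\wedge\iota_{X_\alpha}h$, $\Lie_{X_\alpha}\sigma_h\wedge\iota_{X_\beta}h$, $\mathcal{R}(\d\beta)(\d+\sigma_h)\iota_{X_\alpha}h$ and $\mathcal{R}(\d\alpha)(\d+\sigma_h)\iota_{X_\beta}h$ are zero. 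Moreover, the inner parentheses in the first two large summands collapse to $-(\Lie_{X_\beta}+\mathcal{R}(\d\beta))h$ and $-(\Lie_{X_\alpha}+\mathcal{R}(\d\alpha))h$ respectively; these are identically zero by the two hypotheses on $\alpha$ and $\beta$, so applying the outer first-order operators $(\Lie_{X_\alpha}+\mathcal{R}(\d\alpha))$ and $(\Lie_{X_\beta}+\mathcal{R}(\d\beta))$ to the zero section still yields zero.

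The sole remaining summand is $(\d+\sigma_h\wedge)\Lie_{X_\alpha\wedge X_\beta}h$, and I would finish by proving $\Lie_{X_\alpha\wedge X_\beta}h=0$ directly. Using the paper's sign convention for the Lie derivative along a bivector field,
$$\Lie_{X_\alpha\wedge X_\beta}h \;=\; \d\,\iota_{X_\beta}\iota_{X_\alpha}h \;-\; \iota_{X_\beta}\iota_{X_\alpha}\d h.$$
The first term vanishes because $\iota_{X_\alpha}h=0$. For the second, Cartan's formula combined with $\iota_{X_\alpha}h=0$ gives $\iota_{X_\alpha}\d h=\Lie_{X_\alpha}h=-\mathcal{R}(\d\alpha)\cdot h$ by hypothesis; a further contraction with $X_\beta$ produces $-\mathcal{R}(\d\alpha)\cdot\iota_{X_\beta}h=0$. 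Hence $\Lie_{X_\alpha\wedge X_\beta}h=0$, the entire right-hand side of the long identity is zero, and Proposition~\ref{prop:Dissipative_condition} certifies that $\{\alpha,\beta\}$ is dissipated.

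The only substantive bookkeeping hurdle is making sure that the vanishing of the inner brackets in the first two large summands holds as an identity of sections (rather than merely pointwise), so that the outer derivations do not introduce spurious terms, and that the sign in the bivector Lie derivative formula matches the convention underlying the long identity. Both checks are routine once written out explicitly.
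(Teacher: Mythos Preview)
Your proof is correct and follows exactly the paper's approach: the paper's proof consists of the single sentence ``It is enough to notice that all terms vanish from the last expression and to use Proposition~\ref{prop:Dissipative_condition},'' and you have simply spelled out in detail why each of the six summands vanishes. Your explicit verification that $\Lie_{X_\alpha\wedge X_\beta}h=0$ (via $\iota_{X_\alpha}h=0$ and $\iota_{X_\alpha}\d h=\Lie_{X_\alpha}h=-\mathcal{R}(\d\alpha)h$) is a welcome elaboration of a step the paper leaves entirely to the reader.
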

\begin{proof} It is enough to notice that all terms vanish from the last expression and use Proposition \ref{prop:Dissipative_condition}.
\end{proof}

\subsection{Multisymplectization of the dynamics}

Given a contact manifold, say $(M, \eta)$, and a Hamiltonian $H \in \Cinfty(M)$, we can define the homogenous Hamiltonian on its canonical symplectization $(\widetilde M:= M \times \mathbb{R}_\times, - \d (z \,\eta))$ as $\widetilde H(p, z) := z H(p)$. Then, the symplectic dynamics of $\widetilde H$ defined by the Hamiltonian vector field $X_{\widetilde H} \in \X(\widetilde M)$ are $\tau$-related with the contact dynamics defined by the (contact) Hamiltonian vector field $X_{H} \in X(M)$. Furthermore, the evolution of observables are conveniently related (see \cite{GG_22}). We now study this relationship in the general setting presented in this paper.

First, let us recall the notion of multisymplectic Hamilton--De Donder--Weyl equations.

\begin{definition}
\label{def:Multisymplectic_HDW_equations}
Let $(\widetilde M, \Omega)$ be a multisymplectic manifold and $\widetilde h \in \Omega^{n}(\widetilde M)$ be an arbitrary form (which we refer to as the Hamiltonian). Then, a map $\widetilde \psi \colon X \longrightarrow \widetilde M$ from an $n$-dimensional manifold $X$ to $\widetilde M$, is said to satisfy {\bf the multisymplectic Hamilton--De Donder--Weyl equations} if 
\[
\widetilde \psi^\ast \iota_\xi\left( \Omega - \widetilde h\right) = 0\,, \quad \forall \xi \in \X(\widetilde M)\,.
\]
\end{definition}

Now, we may prove the following characterization of wether solutions may be lifted or projected under certain assumption on the maps $\psi \colon X \rightarrow M$, related to the transversality of the maps to be considered. For instance, when the multicontact structure comes from a fibered manifold $\pi \colon Y \rightarrow X$ (see Section \ref{section:Field_equations}), this is immediately ensured.

\begin{theorem}
\label{thm:Liftting_HDW_solutions}
Let $(M, \Theta)$ be a variational multicontact manifold and let $(\widetilde M, \Omega)$ denote its canonical multisymplectization. Given a good Hamiltonian $h \in \Omega^n(M)$, define the associated homogeneous Hamiltonian as $\widetilde h(p, z):= z \tau^\ast h(p)$. Then, we have the following:

\begin{enumerate}[\rm (i)]
    \item Let $\widetilde \psi \colon X \longrightarrow \widetilde M$ be a solution to the multisymplectic Hamilton--De Donder--Weyl equations defined by $\widetilde h$ such that, if we denote $\psi:= \tau \circ \widetilde \psi$, we have
    \[
    \bigwedge ^{n-1} X = \langle \widetilde \psi^\ast \left(\iota_\xi \Theta \right) \colon \xi \in \ker \d \Theta \rangle \qquad \text{and} \qquad \widetilde{\mathcal{R}}|_{\psi(X)} = \xi^i \otimes \left(\psi_\ast(U_i) + (\Ima \flat_\Theta)^{\circ, n-1} \right)\,,
    \]
    for certain $\xi^i \in \ker_1 \d \Theta$ and $U_i \in \X^{n-1}(X)$. Then $\psi := \tau \circ \widetilde \psi \colon X \longrightarrow M$ is a solution of the multicontact Hamilton--De Donder--Weyl equations defined by $h$.
    \item Conversely, let $\psi \colon X \rightarrow M$ be a solution of the multicontact Hamilton--De Donder--Weyl equations defined by $h$ such that 
    \[
    \bigwedge ^{n-1} X = \langle  \psi^\ast \left(\iota_\xi \Theta \right) \colon \xi \in \ker \d \Theta \rangle\qquad \text{and} \qquad \widetilde{\mathcal{R}}|_{\psi(X)} = \xi^i \otimes \left(\psi_\ast(U_i) + (\Ima \flat_\Theta)^{\circ, n-1} \right)\,,
    \]
    for certain $\xi^i \in \ker_1 \d \Theta$ and $U_i \in \X^{n-1}(M)$. Then there is a lift 
    % https://q.uiver.app/#q=WzAsMyxbMCwxLCJYIl0sWzEsMCwiXFx3aWRldGlsZGUgTSJdLFsxLDEsIk0iXSxbMCwxLCJcXHdpZGV0aWxkZSBcXHBzaSIsMV0sWzEsMiwiXFx0YXUiLDFdLFswLDIsIlxccHNpIiwxXV0=
\[\begin{tikzcd}[cramped]
	& {\widetilde M} \\
	X & M
	\arrow["\tau"{description}, from=1-2, to=2-2]
	\arrow["{\widetilde \psi}"{description}, from=2-1, to=1-2]
	\arrow["\psi"{description}, from=2-1, to=2-2]
\end{tikzcd}\,,\]
    satisfying the multisymplectic Hamilton--De Donder--Weyl equations for $\widetilde h$ if and only if there is a function $g\in \Cinfty(X)$ such that 
    \[
    \psi^\ast \sigma_h = \d g\,.
    \]
    In this case, the lift can be written as $\widetilde \psi = (\psi, e^g)$.
    \item Finally, under the same hypotheses, there is a correspondence between the evolution of observables, namely for $\alpha \in \Omega^{a}_H(M)$ conformal Hamiltonian, defining $\widetilde \alpha (p, z) := z \, \tau^\ast \alpha(p)$, we have 
    \[
    \widetilde \psi^\ast (\d \widetilde \alpha) = (\widetilde \psi^\ast z) \cdot \, \widetilde \psi^\ast \left( \sigma_h \wedge \alpha + \d \alpha \right)\,. 
    \]
\end{enumerate}
\end{theorem}

\begin{remark} Notice that Theorem \ref{thm:Liftting_HDW_solutions} implies that there is a correspondence between closed forms on solutions in the homogeneous multisymplectic setting and dissipated forms in the multicontact setting.
\end{remark}

Before proving the Theorem, let us first prove an useful formula:

\begin{lemma}
\label{lemma:formula_for_dissipation_form}
Let $(M, \Theta)$ be a variational multicontact manifold and let $\psi \colon X \longrightarrow M$
be a map from an $n$-dimensional manifold $X$, such that there exists $U_i \in \X^{n-1}(X)$ and $\xi^i \in \X(M)$ taking values in $\ker \d \Theta$ for which 
\[
\widetilde{\mathcal{R}}|_{\psi(X)} = \xi^i\otimes \left( \psi_\ast (U_i) + \left(\Ima \flat_\Theta\right)^{\circ, n-1}\right)
\]
Then, for every $\xi \in \ker  \d \Theta$ and every good Hamiltonian $h \in \Omega^n(M)$, we have 
    \[
    \psi^\ast \left(\sigma_h \wedge \iota_\xi \Theta \right) = \psi^\ast(\iota_\xi \d h).
    \]
\end{lemma}
\begin{proof} 
Let $\xi^i$ be a basis for $\ker_1 \d \Theta$, let $U_i$ be multivector fields as in the hypothesis and let $\xi = a_i \xi^i \in \ker \d \Theta$. Then, 
Indeed, we have 
\begin{align*}
    \psi^\ast \left (\sigma_h \wedge \iota_\xi \Theta \right) &=(-1)^{n-1} n \cdot \psi^\ast \left( \iota_{\widetilde{\mathcal{R}}} \d h\right) \wedge \psi^\ast \left( \iota_\xi \Theta\right) = (-1)^{n-1}n \left(\iota_{\psi_\ast(U_i)} \psi^\ast\left( \iota_{\xi^i} \d h\right)\right) \wedge \psi^\ast \left(\iota_\xi \Theta \right)\\
    &= n\cdot \psi^\ast \left( \iota_{\xi^i} \d h\right) \wedge \psi^\ast \left( \iota_{\psi_\ast(U_i)} \iota_{\xi} \Theta\right)\\
    &= \psi^\ast(\iota_{\xi^i} \d h) \cdot a_i = \psi^\ast(\iota_\xi \d h)\,,
\end{align*}
which finishes the proof.
\end{proof}

\begin{remark} Lemma \ref{lemma:formula_for_dissipation_form} relates the definition for the dissipation $1$-form given in this paper with the one presented in \cite{LGMRR_23}. Indeed, it is a generalization.
\end{remark}

\begin{proof}[Proof of Theorem \ref{thm:Liftting_HDW_solutions}]
    \begin{enumerate}[\rm (i)]
        \item Taking $\xi = \pdv{z}$ on $\widetilde M$, we verify immediately that $\widetilde \psi$ satisfies $\widetilde \psi^\ast \tau^\ast \left(\Theta + h\right) = 0$, given that $\widetilde \psi$ satisfies the multisymplectic Hamilton--De Donder--Weyl equations, so that $\psi^\ast \left( \Theta + h\right) = 0$. Now, let us write
        \begin{align}
            0 &= \widetilde \psi^\ast \iota_\xi\Omega = \widetilde\psi^\ast \left( \d z \wedge \iota_{\xi} \left( \Theta + h\right) - \d z (\xi)(\Theta + h) -z \iota_\xi(\Theta + h) \right)\\
            &=\widetilde \psi^\ast \left( \d z \wedge \iota_\xi \left(\Theta + h\right) - z \iota_\xi \left( \d \Theta + \d h\right)\right). \label{eq:multicontact_equations}
        \end{align}
    In particular, taking $\xi \in \ker \d \Theta$, we find, in light of Proposition \ref{prop:Reeb_contracted_with_Hamiltonian} and the definition of Hamiltonian, that 
    \[
    \widetilde \psi^\ast \left( \d z \wedge \iota_\xi \Theta \right) = \widetilde \psi^\ast \left(z\, \iota_\xi \d h \right)\,.
    \]
    Letting $f := \widetilde \psi^\ast z$, we may write the above equality as $\frac{\d f}{f} \wedge \psi^\ast \left( \iota_\xi \Theta\right) = \psi^\ast (\iota_\xi \d h )$. We are now going to show that \[
    \psi^\ast \sigma_h = \frac{\d f}{f}\,,
    \]
    which will conclude the proof, after substituting in Eq. \eqref{eq:multicontact_equations}. Indeed, by Lemma \ref{lemma:formula_for_dissipation_form}, we have 
    \[
    \left(\psi^\ast\sigma_h - \frac{\d f}{f} \right) \wedge \psi^\ast \iota_\xi \Theta = 0\,,
    \]
    for every $\xi \in \ker \d \Theta$. Since, by hypotheses, $\langle\psi^\ast \iota_\xi \Theta \colon \xi \in \ker \d \Theta\rangle = \bigwedge^{n-1} X$, we have 
    \[
    \left(\psi^\ast \sigma_h - \frac{\d f }{f}\right) \wedge \alpha = 0\,,
    \]
    for every $\alpha \in \bigwedge^{n-1}X$, which implies the desired equality, finishing the proof.
    \item It is straight-forward to prove the converse, following the same reasoning as in {\rm (i)}.
    \item It follows in a straight-forward manner, using the fact that $\widetilde \psi^\ast (\d z ) = \widetilde \psi^\ast(z) \cdot \psi^\ast \sigma_h$.
    
    \end{enumerate}
\end{proof} 

\begin{remark} Notice that Theorem \ref{thm:Liftting_HDW_solutions} {\rm (iii)} implies that there is a correspondence between conserved forms (forms that are closed on solutions) in the multisymplectic setting and disipated forms in the multicontact setting, provided that $\psi^\ast \sigma_h$ is exact.
\end{remark}

\begin{remark}The notion of lifting solutions to the canonical multisymplectization may be generalized to an arbitrary multisymplectization $\tau \colon \widetilde M \longrightarrow M$ with nowhere vanishing conformal factor $\phi \in \Cinfty(\widetilde M)$ by introducing the homogenous Hamiltonian $\widetilde h := \phi \cdot \tau^\ast h$. Under the same technical requirement, statement {\rm (i)} of Theorem \ref{thm:Liftting_HDW_solutions} follows in a straightforward manner and so does {\rm (iii)}, using Theorem \ref{thm:Classification_multisymplectization}. Of course, in full generality, {\rm (ii)} fails to be true since there may not even be a lift $\widetilde \psi \colon X \longrightarrow \widetilde M$.
\end{remark}

% Nevertheless, in light of Theorem \ref{thm:Classification_multisymplectization} we may prove the following refinement of Theorem \ref{thm:Liftting_HDW_solutions}:

 %\begin{proposition} Let $(M, \Theta)$ be a multicontact manifold and let $(\widetilde M, \Omega)$  be a multisymplectization with conformal factor $\phi \in \Cinfty(\widetilde M)$. Let $h \in \Omega^{n}(M)$ be a good Hamiltonian and denote by $\widetilde h := \phi \cdot \tau^\ast h$ be the induced homogeneous Hamiltonian. Let $\psi\colon X \longrightarrow M$ is a solution of the multicontact Hamilton--De Donder--Weyl equations defined by $h$ such that there exists a lift $\psi' \colon X \rightarrow \widetilde M$. {\color{red} Then, there is a lift $\widetilde \psi \colon X \longrightarrow \widetilde M$ that solves the multisymplectic Hamilton--De Donder--Weyl equations defined by $\widetilde h$ if and only if $\psi^\ast \sigma_h = \d f$. Furthermore, the lifting satisfies $\phi \circ \widetilde \psi = f$.} \end{proposition}

% \begin{proof}{\color{red} Añadir}
% \end{proof}

\section{Dissipative field theories}
\label{section:Dissipative_field_theories}
In this section we apply the  constructions developed along this paper to the case of dissipative field theories.

Let $\pi\colon Y \rightarrow X$ be a fiber bundle (or, more generally, a fibration). The extended phase space of action dependent field theories is defined as (see \cite{LGMRR_23,LGMRR_25,GLMR_24})
\[
M := \bigwedge^n_2 Y \oplus \bigwedge^{n-1} X\,,
\]
where $\bigwedge^n_2 Y$ denotes the bundle of $2$-horizontal $n$-forms on $Y$ and $\bigwedge^{n-1} X$ denotes the bundle of $(n-1)$-forms on $X$. We can find a canonical multicontact form on $M$ given by 
\[\Theta = \d \Theta_X^{n-1} - \Theta_Y ^{n}\,,\]
where $\Theta_Y^{n}$ and $\Theta^{n-1}_X$ denote the canonical Liouville forms on $\bigwedge^n_2 Y$ and $\bigwedge^{n-1} X$, respectively. If $(x^\mu, y^i)$ denote fibered coordinates on $\pi\colon Y \rightarrow X$, then we have canonical coordinates $(x^\mu, y^i, p, p^\mu_i, s^\mu)$ on $M$. Indeed, any element in $\bigwedge^n_2 Y$ (respectively, in $\bigwedge^{n-1} X$) can be locally written as

\[p \d^n x + p^\mu_i \d y^i \wedge \d^{n-1}x_\mu\qquad  (\,\text{respectively} \; s^\mu \d ^{n-1}x_\mu\,)\,,
\]
where $\d^nx = \d x^1 \wedge \cdots \wedge \d x^n$ and $\d^{n-1}x_\mu = i_{\frac{\partial}{\partial x^\mu} }\d^nx$.

The coordinate expression of the canonical multicontact form and its exterior differential under this choice read
\[\Theta = \d s^\mu \wedge \d^{n-1}x_\mu -  p \d^n x - p^\mu_i \d y^i \wedge \d^{n-1}x_\mu\,, \qquad \d \Theta =  -  \d p \wedge \d^n x - \d p^\mu_i \wedge d y^i \wedge \d^{n-1}x_\mu\,.\]
Therefore, we have 
\[\ker_1 \Theta = \left \langle\pdv{y^i} - p^\mu_i \pdv{s^\mu}\,, \pdv{p^\mu_i}\right\rangle \qquad\text{and}\qquad \ker_1 \d \Theta = \left\langle  \pdv{s^\mu}\right\rangle\,,\]
which implies that $\Theta$ defines a multicontact structure.

As we showed in Section \ref{section:Field_equations}, the best behaved conformal Hamiltonian forms are those whose infinitesimal conformal transformation takes values in $(\Ima \flat_{\Theta})^{\circ, 1}$. In our case, 
\[\flat_{\theta} \colon \ker_1 \d \Theta \longrightarrow \bigwedge^{n-1} M\]
and has image $\langle \d^{n-1}x_\mu\rangle$, so that $\Ima \flat_\Theta$ is the space of horizontal forms with respect to the fibered structure $M \rightarrow X$. In particular, $(\Ima \flat)^{\circ, 1}$ is the vertical distribution. Let us study those Hamiltonian forms under the previous multicontact structure with vertical infinitesimal conformal transformation with respect to the projection $M \rightarrow X$.

\begin{proposition}
\label{prop:Conf_transformation_locally}
Let $n > 1$ and $X \in \X(M)$ be a vertical infinitesimal conformal transformation. Then, its local expression is
\begin{multline}
    X = \left( F s^\mu + G^\mu - p^\mu_j \pdv{G^\nu}{p^\nu_j}\right) \pdv{s^\mu} - \pdv{G^\mu}{p^\mu_i} \pdv{y^i} \\
    + \left( \pdv{F}{y^i} s^\mu + \pdv{G^\mu}{y^i} + F p^\mu_i \right) \pdv{p^\mu_i} + \left( \pdv{F}{x^\mu} s^\mu + \pdv{G^\mu}{x^\mu} + F p \right) \pdv{p}\,,
\end{multline}
where $F$ is independent of $s^\mu$, $p$ and $p^\mu_i$, and $G^\mu$ satisfies $\dparder{G^\mu}{p^\nu_i} = \delta^{\mu}_\nu \dparder{G^\alpha}{p^\alpha_i}$.
\end{proposition}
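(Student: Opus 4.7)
The plan is to parametrize a vertical vector field $X$ in coordinates, compute $\Lie_X\Theta$, impose the conformal relation $\Lie_X\Theta = F\Theta$ for some $F \in \Cinfty(M)$ (which is what Definition \ref{def:conformal_symmetry} requires when $p = 1$), and solve the resulting linear PDE system on the coefficients of $X$. Since $X$ is vertical with respect to $M \to X$, it has no $\partial_{x^\mu}$-component, so I write $X = A^i \partial_{y^i} + B\partial_p + C^\mu_i\partial_{p^\mu_i} + D^\mu\partial_{s^\mu}$; a direct computation, using $\iota_X\d^{n-1}x_\mu = 0$, yields $\iota_X\Theta = K^\mu\,\d^{n-1}x_\mu$ with $K^\mu := D^\mu - p^\mu_i A^i$.

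Expanding $\Lie_X\Theta = \d\iota_X\Theta + \iota_X\d\Theta$ using the explicit formulas for $\Theta$ and $\d\Theta$, and collecting terms in the independent basis of $n$-forms $\{\d^n x,\ \d y^i\wedge\d^{n-1}x_\mu,\ \d p\wedge\d^{n-1}x_\mu,\ \d p^\nu_j\wedge\d^{n-1}x_\mu,\ \d s^\nu\wedge\d^{n-1}x_\mu\}$, equating to the corresponding coefficients of $F\Theta$ yields the system
\begin{align*}
\partial_{s^\nu}K^\mu &= F\delta^\mu_\nu\,,\qquad \partial_p K^\mu = 0\,,\qquad \partial_{p^\nu_j}K^\mu = -\delta^\mu_\nu A^j\,,\\
C^\mu_i &= \partial_{y^i}K^\mu + F p^\mu_i\,,\qquad B = \partial_{x^\mu}K^\mu + Fp\,.
\end{align*}

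The core of the argument lies in the first three equations. Using $n > 1$, the mixed-partial consistency of the first (compare $\partial_{s^\rho}\partial_{s^\mu}K^\mu$ with $\partial_{s^\mu}\partial_{s^\rho}K^\mu = 0$ for $\rho\ne\mu$) forces $F$ to be independent of $s^\nu$; integrating then gives $K^\mu = Fs^\mu + G^\mu$ with $G^\mu$ also $s$-independent. Substituting into $\partial_p K^\mu = 0$ and $\partial_{p^\nu_j}K^\mu = -\delta^\mu_\nu A^j$ and separating the parts linear in $s^\mu$ from those independent of $s^\mu$ (legitimate because $s^\mu$ is an independent coordinate not appearing in $A^j$ or $G^\mu$) produces $\partial_p F = 0$ and $\partial_{p^\nu_j}F = 0$, so $F = F(x,y)$. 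The remaining content of the third equation reduces to $\partial_{p^\nu_j}G^\mu = -\delta^\mu_\nu A^j$: for $\mu \ne \nu$ this gives $\partial_{p^\nu_j}G^\mu = 0$, and for $\mu = \nu$ it forces $\partial_{p^\mu_j}G^\mu$ (no sum) to be independent of the choice of $\mu$, which is precisely the constraint $\dparder{G^\mu}{p^\nu_i} = \delta^\mu_\nu\dparder{G^\alpha}{p^\alpha_i}$ and determines $A^j$ in the claimed form. Finally, reading off $D^\mu = K^\mu + p^\mu_i A^i$ together with the last two equations of the system yields the expressions for $D^\mu$, $C^\mu_i$, and $B$ in the statement.

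The main obstacle is organizational rather than conceptual: the Lie-derivative computation is lengthy and one must carefully collect coefficients across a sizeable number of independent basis $n$-forms. The crucial conceptual ingredient is that the dependencies of $F$ and $G^\mu$ on the fiber coordinates $p$ and $p^\nu_j$ can only be disentangled by splitting each equation into its $s^\mu$-linear and $s^\mu$-independent parts, which is exactly where the hypothesis $n > 1$ (needed to guarantee the existence of some $\mu \ne \nu$) is essential.
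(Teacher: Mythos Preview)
Your proof is correct and follows essentially the same route as the paper's: write the vertical $X$ in coordinates, compute $\Lie_X\Theta$, equate with $F\Theta$, and read off the constraints on the coefficients via the quantity $K^\mu = D^\mu - p^\mu_i A^i$ (the paper's $A^\mu - p^\mu_j B^j$). One small expository slip: when you separate the $s^\mu$-linear and $s^\mu$-independent parts of $\partial_{p^\nu_j}K^\mu = -\delta^\mu_\nu A^j$, you justify it by asserting that $A^j$ is $s$-independent, but that has not yet been shown; the clean way (and what the paper does) is to first take $\mu\neq\nu$ so the right-hand side vanishes, deduce $\partial_{p^\nu_j}F=0$ from that, and only then read off $A^j = -\partial_{p^\mu_j}G^\mu$ from the diagonal case.
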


\begin{proof} Let \[
X = A^\mu \pdv{s^\mu} + B^i \pdv{y^i} + C^\mu_i \pdv{p^\mu_i} + D \pdv{p}\,.
\]
Then,
\begin{align*}
    \d \iota_{X} \Theta &= \d \left(A^\mu \d^{n-1}x_\mu - p^\mu_i B^i \d^{n-1}x_{\mu} \right)\\
    &= \left(\pdv{A^\mu}{x^\mu }-p^i_\mu \pdv{B^i}{x^\mu} \right)\d^n x  + \pdv{y^i}\left(A^\mu - p^\mu_j B^j\right)\d y^i \wedge \d^{n-1}x_\mu\\
    &\quad + \pdv{p^\nu_i}\left(A^\mu - p^\mu_j B^j\right)\d p^\nu_i \wedge \d^{n-1}x_\mu + \pdv{p}\left(A^\mu - p^\mu_j B^j\right)\d p \wedge \d^{n-1}x_\mu\\
    &\quad + \pdv{s^\nu}\left(A^\mu - p^\mu_j B^j\right)\d s^\nu \wedge \d^{n-1}x_\mu\,,
\end{align*}
and
\[
    \iota_{X} \d \Theta = - D \d^n x - C^\mu_i \d y^i \wedge \d ^{n-1}x_\mu + B^i \d p^\mu_i \wedge \d^{n-1}x_\mu\,,
\]
so that, if $X$ defines an infinitesimal conformal transformation, $\Lie_X \Theta = f \cdot \Theta$ for certain $f \in \Cinfty(M)$ and we must have
\begin{align*}
  &\left( \pdv{A^\mu}{x^\mu } -p^i_\mu\pdv{B^i}{x^\mu} - D\right)\d^n x + \left(\pdv{y^i}\left(A^\mu - p^\mu_j B^j\right) - C^\mu_i\right) \d y^i \wedge \d^{n-1}x_\mu\\
  &\quad + \left( \pdv{p^\nu_i}\left(A^\mu - p^\mu_j B^j\right) + \delta^\mu_\nu B^i\right) \d p^\nu_i \wedge \d^{n-1}x_\mu+ \pdv{p}\left(A^\mu - p^\mu_j B^j\right)\d p \wedge \d^{n-1}x_\mu\\
  &\quad + \pdv{s^\nu}\left(A^\mu - p^\mu_j B^j\right)\d s^\nu \wedge \d^{n-1}x_\mu = \\
  &\qquad\qquad\qquad\qquad\qquad\qquad = f \left( \d s^\mu \wedge \d^{n-1}x_\mu - p \d^n x - p^\mu_i \d y^i \wedge \d^{n-1}x_\mu \right)\,.
\end{align*}
Contracting with $\dfrac{1}{n} \dparder{}{s^\mu} \wedge \dparder{^{n-1}}{^{n-1}x_\mu}$ both sides of the equality, we obtain
\[f = \frac{1}{n}\pdv{s^\mu} \left( A^\mu - p^\mu_j B^j\right)\,.\]
Comparing the terms with $\d s^\nu$, we conclude that
$A^\mu - p^\mu_j B^j = Fs^\mu + G^\mu$, where $F$ and $G^\mu$ are independent of $s^\nu$. In particular,
 $C^\mu_i = \pdv{F}{y^i} s^\mu + \pdv{G^\mu}{y^i} + p^\mu_i F$ and $D = \pdv{F}{x^\mu} s^\mu + \pdv{G^\mu}{x^\mu} + p F$. Since the term with $\d p$ must vanish, both $F$ and $G^\mu$ are independent of $p$ and since the term with $\d p^\nu_i$ must vanish we arrive at the condition
 \[\pdv{F}{p^\nu_i} s^\mu + \pdv{G^\mu}{p^\nu_i} = - \delta^\mu_\nu B^i\,.\]
Taking $\nu \neq \mu$, we conclude that $F$ is independent of $p^\mu_i$ so we are left with the equation $\pdv{G^\mu}{p^\nu_i} = - \delta^\mu_\nu B^i$, proving the result.
\end{proof}

Then, using Proposition \ref{prop:Conf_transformation_locally}, we conclude the following

\begin{corollary} A Hamiltonian form $\alpha \in \Omega^{n-1}_H(M)$ with vertical infinitesimal conformal transformation has the following local expression:
\[\alpha = (- F s^\mu - G^\mu) \d^{n-1} x_\mu\,,\]
where $F = F(x, y)$, and $G^\mu$ satisfies $\pdv{G^\mu}{p^\nu_i} = \delta^{\mu}_\nu \pdv{G^\alpha}{p^\alpha_i}$.
\end{corollary}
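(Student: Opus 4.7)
The plan is very direct: the corollary is a computation. By Definition \ref{def:conformal_Hamiltonian_form}, a conformal Hamiltonian form $\alpha \in \Omega^{n-1}_H(M)$ is, by construction, of the form $\alpha = -\iota_X \Theta$ for some infinitesimal conformal transformation $X \in \X(M)$. We are told that $X$ is vertical with respect to $\pi_{MX}\colon M \to X$, so its local expression is the one given in Proposition \ref{prop:Conf_transformation_locally}. The proof reduces to substituting this expression of $X$ and the coordinate formula of $\Theta$ into $-\iota_X\Theta$ and collecting terms.

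Concretely, I would first note that the only summands of $\Theta = \d s^\mu \wedge \d^{n-1}x_\mu - p\,\d^nx - p^\mu_i\,\d y^i \wedge \d^{n-1}x_\mu$ that contribute to $\iota_X \Theta$ for a vertical $X$ are the first and the third, since the middle term $-p\,\d^n x$ is horizontal over $X$ and is killed by any vertical multivector. Writing $X = A^\mu \partial_{s^\mu} + B^i \partial_{y^i} + C^\mu_i \partial_{p^\mu_i} + D\,\partial_p$ with the coefficients provided by Proposition \ref{prop:Conf_transformation_locally}, a direct contraction gives
\begin{equation*}
\iota_X \Theta \;=\; A^\mu\,\d^{n-1}x_\mu \;-\; p^\mu_j B^j\,\d^{n-1}x_\mu \;=\; \bigl(A^\mu - p^\mu_j B^j\bigr)\,\d^{n-1}x_\mu\,.
\end{equation*}

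The remaining task is a cancellation: using $A^\mu = Fs^\mu + G^\mu - p^\mu_j \tfrac{\partial G^\nu}{\partial p^\nu_j}$ and $B^j = -\tfrac{\partial G^\alpha}{\partial p^\alpha_j}$ from Proposition \ref{prop:Conf_transformation_locally}, the correction term $-p^\mu_j\,\partial_{p^\nu_j} G^\nu$ in $A^\mu$ cancels exactly against $-p^\mu_j B^j = p^\mu_j\,\partial_{p^\alpha_j} G^\alpha$, leaving $A^\mu - p^\mu_j B^j = F s^\mu + G^\mu$. Therefore $\alpha = -\iota_X\Theta = (-Fs^\mu - G^\mu)\,\d^{n-1}x_\mu$, with $F = F(x,y)$ and $G^\mu$ subject to the constraint $\tfrac{\partial G^\mu}{\partial p^\nu_i} = \delta^\mu_\nu\,\tfrac{\partial G^\alpha}{\partial p^\alpha_i}$, precisely as in Proposition \ref{prop:Conf_transformation_locally}.

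There is essentially no obstacle: the only point requiring mild care is keeping track of the summation over the index $\nu$ in $-p^\mu_j\,\partial_{p^\nu_j}G^\nu$ and matching it against the index $\alpha$ summed in $B^j$, which is a purely notational matter. Nothing beyond the explicit coefficients from Proposition \ref{prop:Conf_transformation_locally} and the coordinate form of $\Theta$ is used, so the corollary follows immediately.
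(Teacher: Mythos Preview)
Your proof is correct and follows exactly the approach the paper intends: the corollary is stated immediately after Proposition \ref{prop:Conf_transformation_locally} with no separate proof, only the remark that it follows from that proposition, and your computation of $\alpha=-\iota_X\Theta$ using the explicit coefficients $A^\mu,B^i$ is precisely the intended derivation.
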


In Table \ref{table:examples}, the reader can find some elementary examples of conformal Hamiltonian forms, their associated infinitesimal conformal transformations, and conformal factors. The Jacobi brackets of each possible pair can be found in Table \ref{table:brackets}.

\begin{table}[ht]
\caption{Elementary examples of conformal Hamiltonian forms}
\centering
%\rule{0pt}{4ex}
\renewcommand{\arraystretch}{2}
  \begin{tabular}{c|cc}
    Form & Infinitesimal conformal transformation & Conformal factor \\
    \hline
    $s^\mu \d ^{n-1}x_\mu$ & $-s^\mu \pdv{s^\mu} -p^\mu_i \pdv{p^\mu_i} -p \pdv{p}$ & $-1$ \\
     \hline
    $y^i \d^{n-1}x_\mu$ & $-y^i \pdv{s^\mu}  -\pdv{p^\mu_i}$ & 0 \\
     \hline
     $p^\mu_i \d^{n-1}x_\mu$ & $ \pdv{y^i}$ & 0 \\
     \hline
     $\d^{n-1}x_\mu$ & $-\pdv{s^\mu}$ & $0$
  \end{tabular}
  \label{table:examples}
\end{table}

\renewcommand{\arraystretch}{2}

\begin{table}[ht]
\centering
\caption{Jacobi brackets of the elementary examples}
    \begin{tabular}{c|cccc}
$\{\cdot,\cdot\}$ & $s^\nu \d^{n-1}x_\nu$ & $y^j \d^{n-1}x_\nu$ & $p^\nu_j \d^{n-1}x_\nu$ & $\d^{n-1}x_\nu$  \\
  \hline

$s^\mu \d^{n-1}x_\mu$ & $0$ & $y^j \d^{n-1}x_\nu$ & $0$ & $\d^{n-1} x_\mu$ \\
\hline
$y^i \d^{n-1}x_\mu$ & $-y^i \d^{n-1}x_\mu$ & $0$ & $\delta^i_j \d^{n-1}x_\nu$ & $0$ \\
\hline
$p^\mu_i \d^{n-1}x_{\mu}$ & $0$ & $- \delta^j_i \d^{n-1}x_\mu$ & $0$ & $0$ \\
\hline
$\d^{n-1}x_\mu$ & $-\d^{n-1}x_\mu$ & $0$ & $0$ & $0$
\end{tabular}
\label{table:brackets}
\end{table}

Now, our goal is to write the evolution of the previous Hamiltonian forms with a fixed Hamiltonian (and hence fixed dynamics) using the studied geometry. Intrinsically, a Hamiltonian is given by a \textit{Hamiltonian section}
\[h\colon \bigwedge^{n}_2Y \Big/ \bigwedge^n_1 Y \oplus \bigwedge^{n-1} X \longrightarrow M = \bigwedge^{n}_2Y \oplus \bigwedge^{n-1} X\,,\]
which then induces a multicontact structure on $\bigwedge^{n}_2Y / \bigwedge^n_1 Y \oplus \bigwedge^{n-1} X$ using $\Theta_h = h^\ast \Theta$. As stated in the previous section, we prefer to work with the canonical multicontact structure on $M$ and think of the Hamiltonian as $\widetilde h = \tau^\ast \Theta_h - \Theta$, where $\tau\colon M \rightarrow \bigwedge^{n}_2Y / \bigwedge^n_1 Y \oplus \bigwedge^{n-1} X$ denotes the projection, hence fixing the geometry. The canonical coordinates on $\bigwedge^{n}_2Y / \bigwedge^n_1 Y \oplus \bigwedge^{n-1} X$ are $(x^\mu, y^i, p^\mu_i, s^\mu)$ and, hence, the local expression of the Hamiltonian section will be $p = - H(x^\mu, y^i, p^\mu_i, s^\mu)$, so that \[\Theta_h = \d s^\mu \wedge \d^{n-1} x_\mu + H \d^n x - p^\mu_i \d y^i \wedge \d^{n-1}x_\mu\,,\]
and 
\[\widetilde h = (p + H) \d^n x\,.\]
This implies that $\widetilde h$ is a good Hamiltonian, in the sense of Section \ref{section:Field_equations}, so that it could define the following dynamics for some $\psi \colon X \longrightarrow 
 M$:
\begin{equation*}
    \psi^{\ast} (\Theta + h) = 0\qquad\text{and}\qquad 
    \psi^{\ast} \iota_{\xi} (\d + \sigma_h \wedge)(\Theta + h) =  0\,,\quad \forall \xi \in \X(M)\,,
\end{equation*}
where $\sigma_h$ is the associated dissipation $1$-form, which takes the expression $\sigma_h = \pdv{H}{s^\mu} \d x^\mu$. To obtain the local expression of the Hamilton--de Donder--Weyl equations we will use Corollary \ref{cor:Evolutio_of_vertical_forms} and the conformal Hamiltonian forms of the previous table. Employing Theorem \ref{thm:Evolution_of_Hamiltonian_forms}, a direct computation yields:
\begin{align*}
    \psi^\ast(\d s^\mu \wedge \d^{n-1}x_\mu) &= \psi^\ast \left( \left(p^\mu_i \pdv{H}{p^\mu_i} - H\right) \d^n x \right)\,,\\
     \psi^\ast(\d y^i \wedge \d^{n-1}x_\mu) &= \psi^\ast \left( \pdv{H}{p^\mu_i} \d^n x \right)\,,\\
     \psi^\ast(\d p^\mu_i \wedge \d^{n-1}x_\mu) &= \psi^\ast \left( -\left(\pdv{H}{y^i} + \pdv{H}{s^\mu} p^\mu_i\right) \d^n x \right)\,.
\end{align*}
In particular, if we look for sections $\psi\colon X \rightarrow M$, we obtain the so-called multicontact Hamilton--de Donder--Weyl equations for dissipated field theories \cite{LGMRR_23,GGMRR_20,GLMR_24}:
\[\pdv{s^\mu}{x^\mu} = p^\mu_i \pdv{H}{p^\mu_i} \,,\qquad \pdv{y^i}{x^\mu} = \pdv{H}{p^\mu_i}\,, \qquad \pdv{p^\mu_i}{x^\mu} =  -\left(\pdv{H}{y^i} + \pdv{H}{s^\mu} p^\mu_i\right)\,.\]

%{\color{red}Comment (3) referee: Añadimos un ejemplo facilito? Cuerda vibrante?}

\section{Conclusions and further work}

In this article we defined and studied brackets induced by an arbitrary $n$-form, as well as the relation of these brackets with the usual bracket appearing in multisymplectic geometry through the process of multisymplectization (see Theorem \ref{thm:multisymplectization_and_bracket} and Theorem \ref{thm:L_infinity}). Finally, we computed the Jacobi bracket of some conformal Hamiltonian forms present in the multicontact formulation of action dependent field theories. The definition and properties of these bracket open a few possible directions for research:
\begin{enumerate}[\rm (i)]
    \item With the introduction of the $\sharp$ morphism, one could do a preliminary classification of the submanifolds of a multicontact manifold, defining isotropic, coisotropic and Legendrian submanifolds (just as it is done in Jacobi manifolds is general). A study of these and some possible ``Graded Jacobi structures'' or ``Higher Jacobi structures'' would be of interest to understand the underlying structure of the brackets.
    \item Although we investigated a possible formulation of dynamics using the $\sharp$ and Reeb morphisms, an explicit description of the equations using Jacobi brackets would be of interest. This would allow for a description of reduction, both coisotropic and by symmetries.
    \item We also propose as a research topic the relation between the multicontact formulation of action dependent field theories and the contact infinite dimensional formulation using a space-time splitting. It should be the case that when integrated, the Jacobi bracket of $(n-1)$-forms recovers the usual Jacobi bracket of contact geometry.
    \item We want to compare the current definition of multicontact form  with previous multicontact definitions, and try to obtain a multicontact structure from a given $k$-contact \cite{GGMRR_20} or $k$-cocontact \cite{Riv_23} (see similar results for multisymplectic forms and $k$-cosymplectic structures \cite{LMORS_98}).
\end{enumerate}

%%%%%%%%%%%%%%%%%%%%%%%%%%%%%%%%%%%%%%%%%%%%%%%%%%%%%%%%
\addcontentsline{toc}{section}{Acknowledgments}
\section*{Acknowledgments}

We acknowledge financial support of the 
{\sl Ministerio de Ciencia, Innovaci\'on y Universidades} (Spain), grants PID2021-125515NB-C21, PID2022-137909NB-C21, and RED2022-134301-T of AEI.
M. de Le\'on also acknowledges financial support from the Severo Ochoa Programme for Centers of Excellence in R\&D, CEX-2023-001347-S. We also wish to thank the anonymous referee for the very helpful comments, which have greatly improved the quality of this manuscript.

\section*{Conflict of interest}
On behalf of all authors, the corresponding author states that there is no conflict of interest.

%%%%%%%%%%%%%%%%%%%%%%%%%%%%%%%%%%%%%%%%%%%%%%%%%%%%%%%%

\bibliographystyle{abbrv}
{\small
\bibliography{references.bib}
}

%No compilaba y necesité añadir lo siguiente:
% \printbibliography
\end{document}